\newcommand{\tdl}{\tilde\delta}
\newcommand{\xII}{\bs x_{{\cal I}, \widetilde{\cal I}}} 
\newcommand{\zII}{z_{{\cal I}, \widetilde{\cal I}}}
\DeclareMathOperator{\rank}{rank}
\DeclareMathOperator{\dom}{dom}
\newcommand{\1}{\mathbbm 1}
\newcommand{\RR}{\mathbb R}
\newcommand{\CC}{\mathbb{C}}
\newcommand{\EE}{\mathbb E}
\newcommand{\cal}{\mathcal}
\theoremstyle{plain} 
\newtheorem{theorem}{Theorem}[section]
\newtheorem{lemma}{Lemma}[section]
\newtheorem{proposition}{Proposition}[section]
\DeclareMathOperator{\tr}{Tr}
\DeclareMathOperator*{\support}{supp}
\newcommand{\bs}{\boldsymbol}
\newcommand{\toaslong}{\xrightarrow[n\to\infty]{\text{a.s.}}}
\title[Limiting spectral measure of large random matrices] 
{Analysis of the limiting spectral measure \\ 
of large random matrices \\ 
of the separable covariance type} 
\author[Couillet and Hachem]{Romain Couillet and Walid Hachem} 
\thanks{(R.~Couillet) Sup\'elec, Plateau de Moulon, 91192 
Gif-sur-Yvette, France. \\ E-mail: \texttt{romain.couillet@supelec.fr}. \\  
(W.~Hachem) CNRS LTCI; Telecom ParisTech, 46 rue Barrault, 75013, Paris, 
France. \\ E-mail: \texttt{walid.hachem@telecom-paristech.fr}. \\
This work is partially funded by the French Agence Nationale de la
Recherche under the program ``Mod\`eles Num\'eriques'' under the grant
ANR-12-MONU-0003 (project DIONISOS)}
\begin{document}

\begin{abstract}
Consider the random matrix $\Sigma = D^{1/2} X \widetilde D^{1/2}$ where 
$D$ and $\widetilde D$ are deterministic Hermitian nonnegative matrices with 
respective dimensions $N \times N$ and $n \times n$, and where $X$ is a 
random matrix with independent and identically distributed centered elements
with variance $1/n$. Assume that the dimensions $N$ and $n$ grow to infinity
at the same pace, and that the spectral measures of $D$ and $\widetilde D$ 
converge as $N,n \to\infty$ towards two probability measures. Then it
is known that the spectral measure of $\Sigma\Sigma^*$ converges towards
a probability measure $\mu$ characterized by its Stieltjes Transform. \\
In this paper, it is shown that $\mu$ has a density away from zero, this 
density is analytical wherever it is positive, and it behaves in most cases as 
$\sqrt{|x - a|}$ near an edge $a$ of its support. In addition, a complete characterization of
the support of $\mu$ is provided. \\ 
Aside from its mathematical interest, the analysis underlying these results finds important applications in a 
certain class of statistical estimation problems. 
\end{abstract}


\keywords{Large random matrix theory, Limit Spectral Measure, 
Separable covariance ensemble.} 

\date{6 October 2014}  

\maketitle

\section{Introduction and problem statement}
\label{sec:intro}
Consider the $N \times n$ random matrix 
$\Sigma_n = D_n^{1/2} X_n \widetilde D_n^{1/2}$ 
where $X_n$ is a $N \times n$ real or complex random matrix having independent 
and identically distributed elements with mean zero and variance $1/n$, the
$N\times N$ matrix $D_n$ is determinisitic, Hermitian and nonnegative, and 
the $n \times n$ matrix $\widetilde D_n$ is also deterministic, Hermitian 
and nonnegative. We assume that $n\to\infty$ and $N/n \to c > 0$,
and we denote this asymptotic regime as ``$n\to\infty$''. 
We also assume that the spectral measures of $D_n$ and $\widetilde D_n$
converge respectively towards the probability measures $\nu$ and $\tilde\nu$
as $n\to\infty$. We assume that $\nu \neq \bs d_0$ and 
$\tilde\nu \neq \bs d_0$ where $\bs d_x$ the Dirac measure at $\{x\}$. 
Many contributions showed that the spectral measure of $\Sigma_n \Sigma_n^*$ 
converges to a deterministic probability measure $\mu$ and provided a 
characterization of this limit measure under various assumptions 
\cite{gir-rand-determ90, shl-amalgam-96, BKV96, HLN06}, 
the weakest being found in \cite{zhang06}.   
In this work, we show that $\mu$ has a density away from zero, this density is 
analytical wherever it is positive, and it behaves as $\sqrt{|x - a|}$ near
an edge $a$ of its support for a large class of measures $\nu$, $\tilde{\nu}$.
We also provide a complete characterization of this support along with a thorough analysis of the master equations relating $\mu$ to $\nu$ and $\tilde{\nu}$. 
To that end, we follow the general ideas already provided in the classical 
paper of Marchenko and Pastur \cite{MarPas67} and further developed in 
\cite{sil-choi95} and \cite{DozSil07b}. \\ 
In \cite{sil-choi95}, Silverstein and Choi performed this study in the so called
sample covariance matrix 
case where $\widetilde D_n = I_n$. 
The outline of the present article closely follows that of \cite{sil-choi95} although at multiple occasions our proofs depart from those of \cite{sil-choi95}, making the article more self-contained. In particular, while Silverstein and Choi benefited from the existence of an explicit expression for the inverse of the Stieltjes transform of $\mu$ when $\widetilde D_n = I_n$, this is no longer the case in the general setting requiring the use of more fundamental analytical tools.
In the setting of \cite{sil-choi95}, it has been further shown in 
\cite{BaiSil98} that under some conditions, no closed interval outside the 
support of $\mu$ contains an eigenvalue of $\Sigma_n\Sigma_n^*$, with 
probability one, for all large $n$. In \cite{bai-sil-separation}, a finer
result on the so called ``exact separation'' of the eigenvalues of 
$\Sigma_n\Sigma_n^*$ between the connected components of the support of 
$\mu$ is shown. 
Recently, it has been discovered that the characterization in \cite{sil-choi95} of the support of $\mu$ and the results on the master equations relating $\mu$ to $\nu$,
beside their own interest, lead in conjunction with the results of 
\cite{BaiSil98, bai-sil-separation} to the design of consistent statistical 
estimators of some linear functionals of the eigenvalues of $D_n$ or 
projectors on the eigenspaces of this matrix. Such estimators have been 
developed by Mestre in \cite{mes-it08,mes-sp08}, the initial idea dating 
back to the work of Girko (see \emph{e.g.} \cite{Gir-canonical}). \\
In \cite{DozSil07b}, Brent Dozier and Silverstein studied the properties of the
limit spectral measure of the so called ``Information plus Noise'' ensemble. 
A first result on the absence of eigenvalues outside the support of the limit 
spectral measure has been established in \cite{bai-sil-rmta12}. In 
\cite{lou-val-ejp11, hlmnv-rmta12,val-lou-mes-it12} other separation results
as well as statistical estimation algorithms along the lines of 
\cite{mes-it08,mes-sp08} were proposed. \\
Turning to the separable covariance matrix ensemble of interest here, the 
absence of eigenvalues
outside the support of $\mu$ has been established by Paul and Silverstein 
in \cite{paul-sil09} without characterizing this support. 
The results of this paper therefore complement those of \cite{paul-sil09}. 
More importantly, similar to the case $\widetilde D_n=I_n$, these results are a necessary first step to 
devise statistical estimation algorithms of \emph{e.g.}
linear functionals of the eigenvalues of one of the matrices $D_n$ or 
$\widetilde D_n$. Work on this subject is currently in progress. \\
Finally, it has been noticed in the large random matrix community that there
is an intimate connection between the square root behavior of the density 
of the limit spectral measure at the edges of the support and the 
Tracy-Widom fluctuations of the eigenvalues near those edges 
(see \cite{kar-ap7} dealing with the sample covariance matrix case). 
It can be conjectured that such behaviour still holds (with some assumptions
on the probability law of the elements of $X_n$)
in the separable covariance case
considered here. In this regard, Theorem \ref{sq-root} may help guessing the   
exact form of the Tracy-Widom law at the edges of the support of $\mu$.

We now recall the results describing the asymptotic behavior of the spectral
measure of $\Sigma_n\Sigma_n^*$. 

\subsection{The master equations} 
We recall that the Stieltjes Transform of a probability measure $\pi$ on $\RR$ 
is the function
\[
f(z) = \int \frac{1}{t-z} \pi(dt) 
\]
defined on $\CC_+$. The function $f(z)$ is \emph{i)} holomorphic on 
$\CC_+ = \{ z\, : \, \Im(z) > 0 \}$, \emph{ii)} it satisfies $f(z) \in \CC_+$ 
for any $z \in \CC_+$, and \emph{iii)} 
$\lim_{y\to\infty} | y f(\imath y) | = 1$. In addition, if 
$\pi$ is supported by $\RR_+ = [0,\infty)$, then \emph{iv)} $z f(z) \in \CC_+$ 
for any $z \in \CC_+$. 
Conversely, it is well known that any function $f(z)$ satisfying 
\emph{i)}--\emph{iv)} is the Stieltjes Transform of a probability measure 
supported by $\RR_+$ \cite{Krein77}. Finally, observe that the Stieltjes 
Transform of $\pi$ can be trivially extended from $\CC_+$ to 
$\CC - \support(\pi)$ where $\support(\pi)$ is the support of $\pi$. \\
In this paper, a small generalization of this result will be needed
\cite[Appendix A]{Krein77}: The three following statements are equivalent: 
\begin{itemize}
\item The function $f(z)$ satisfies the properties \emph{i)}, \emph{ii)}, 
 and \emph{iv)}, 
\item It admits the representation 
\[
f(z) = a + \int_0^\infty \frac{1}{t-z} \pi(dt) 
\] 
where $a \geq 0$ and where $\pi$ is a Radon positive measure on $\RR_+$ such
that $0 < \int_0^\infty (1+t)^{-1} \pi(dt) < \infty$, 
\item The function $f(z)$ satisfies the properties \emph{i)} and \emph{ii)},
and furthermore, it is analytical and nonnegative on the negative real 
axis $(-\infty, 0)$. 
\end{itemize} 
 
We now recall the first order result. 
\begin{proposition}[\cite{zhang06}, see also \cite{hachem-loubaton-najim07} for
similar notations] 
\label{1st-order} 
Let the probability measures $\nu\neq \bs d_0$ and $\tilde\nu\neq \bs d_0$ be 
the limit spectral measures of the matrices $D_n$ and $\widetilde D_n$ 
respectively. 
For any $z \in \CC_+$, the system of equations 
\begin{align}
\delta &= c \int \frac{t}{-z(1+\tdl t)} \nu(dt) 
\label{delta} \\
\tdl &= \int \frac{t}{-z(1+\delta t)} \tilde\nu(dt) 
\label{deltat} 
\end{align} 
admits a unique solution $(\delta, \tdl) \in \CC_+^2$. 
Let $\delta(z)$ and $\tdl(z)$ be these solutions. The function 
\begin{equation}
\label{ts} 
m(z) = \int \frac{1}{-z(1+\tdl(z)t)} \nu(dt) , 
\quad z \in \CC_+  
\end{equation} 
is the Stieltjes Transform of a probability measure $\mu$ supported by $\RR_+$.
The function 
\[
\tilde m(z) = \int \frac{1}{-z(1+\delta(z)u)} \tilde\nu(du),  
\quad z \in \CC_+  
\]
is the Stieltjes Transform of the probability measure 
$\tilde\mu = c\mu + (1-c) \bs d_0$. Moreover, denoting by $\mu_n$ the  
spectral measure of $\Sigma_n \Sigma_n^*$ and by 
$\tilde\mu_n = (N/n) \mu_n + (1-N/n) \bs d_0$ the spectral measure of 
$\Sigma_n^* \Sigma_n$, it holds that 
\[
\int \varphi(\lambda) \mu_n(d\lambda) 
\toaslong 
\int \varphi(\lambda) \mu(d\lambda) \quad \text{and} \quad 
\int \varphi(\lambda) \tilde\mu_n(d\lambda) 
\toaslong 
\int \varphi(\lambda) \tilde\mu(d\lambda) 
\]
for any continuous and bounded real function $\varphi$.  
\end{proposition}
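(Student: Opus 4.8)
The plan is to settle the deterministic system (\ref{delta})--(\ref{deltat}) first --- uniqueness of its solution, then existence and holomorphy of $z\mapsto(\delta(z),\tdl(z))$ --- to deduce the announced properties of $m$ and $\tilde m$ from the scalar relations they satisfy, and only afterwards to obtain the almost sure weak convergence of the spectral measures by the standard resolvent and concentration machinery. For \emph{uniqueness} I would first record the sign relations forced on any solution $(\delta,\tdl)\in\CC_+^2$: taking imaginary parts in (\ref{deltat}) and using $u\ge 0$, $\Im(\delta)>0$ gives $\Im(z\tdl)=\Im(\delta)\int u^2|1+\delta u|^{-2}\tilde\nu(du)\ge 0$, and symmetrically $\Im(z\delta)\ge 0$, so $z\delta,z\tdl\in\overline{\CC_+}$ for a solution. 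Substituting the identity for $\Im(z\tdl)$ into the imaginary part of (\ref{delta}) and solving the resulting scalar \emph{linear} equation for $\Im(\delta)$ gives $\Im(\delta)(1-c\,|z|^{-2}QQ')=c\,|z|^{-2}\Im(z)\int t\,|1+\tdl t|^{-2}\nu(dt)$ with $Q:=\int t^2|1+\tdl t|^{-2}\nu(dt)$ and $Q':=\int u^2|1+\delta u|^{-2}\tilde\nu(du)$; the right-hand side is strictly positive (here $\nu\neq\bs d_0$ is used), which forces the a priori bound $c\,QQ'<|z|^2$ at every solution. If now $(\delta_1,\tdl_1)$ and $(\delta_2,\tdl_2)$ are two solutions, subtracting (\ref{delta}) for the two, and likewise (\ref{deltat}), gives $\delta_1-\delta_2=(\tdl_2-\tdl_1)A$ and $\tdl_1-\tdl_2=(\delta_2-\delta_1)B$ where $A=\frac{c}{-z}\int t^2[(1+\tdl_1 t)(1+\tdl_2 t)]^{-1}\nu(dt)$ and $B$ is the analogous integral in $\delta_1,\delta_2$; hence $\delta_1-\delta_2=(\delta_1-\delta_2)AB$, so the solutions coincide unless $AB=1$. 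But Cauchy--Schwarz yields $|A|\le \frac{c}{|z|}\sqrt{Q_1Q_2}$ and $|B|\le\frac1{|z|}\sqrt{Q_1'Q_2'}$, whence $1=|AB|\le\frac{c}{|z|^2}\sqrt{Q_1Q_1'}\sqrt{Q_2Q_2'}<1$ by the bound just obtained --- a contradiction. I expect this comparison argument, together with the identification of the auxiliary quantities $z\delta,z\tdl$ and their hidden positivity, to be the crux of the deterministic part.

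\emph{Existence} of a solution in $\CC_+^2$ can be obtained either by a Schauder fixed-point argument for $\delta\mapsto c\int t[-z(1+g(\delta)t)]^{-1}\nu(dt)$, $g(\delta)=\int u[-z(1+\delta u)]^{-1}\tilde\nu(du)$, on the closed convex wedge $\{\delta\in\CC_+:\Im(z\delta)\ge0\}$ (which the map stabilizes, by the sign computation above), or simply read off from the convergence statement below, which exhibits a solution as a subsequential limit of the finite-$n$ quantities. \emph{Holomorphy} of $z\mapsto(\delta(z),\tdl(z))$ on $\CC_+$ then follows from the holomorphic implicit function theorem together with uniqueness: the Jacobian in $(\delta,\tdl)$ of the map defining (\ref{delta})--(\ref{deltat}) is $\bigl(\begin{smallmatrix}1&-A\\-B&1\end{smallmatrix}\bigr)$ with $|AB|\le c\,QQ'/|z|^2<1$, hence invertible. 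Consequently $m$ is holomorphic on $\CC_+$ (property (i)); $\Im m(z)=|z|^{-2}\int[\Im z+t\,\Im(z\tdl)]|1+\tdl t|^{-2}\nu(dt)>0$ gives (ii); $\Im(zm(z))=\int t\,\Im(\tdl)\,|1+\tdl t|^{-2}\nu(dt)>0$ gives (iv); and since a direct estimate shows $\delta(\imath y),\tdl(\imath y)\to0$ as $y\to\infty$, one gets $-\imath y\,m(\imath y)=\int(1+\tdl(\imath y)t)^{-1}\nu(dt)\to1$, which is (iii). By the characterization recalled just before the proposition, $m$ is the Stieltjes transform of a probability measure $\mu$ on $\RR_+$.

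For the identity $\tilde\mu=c\mu+(1-c)\bs d_0$ and the convergence: writing $t(1+\tdl t)^{-1}=\tdl^{-1}(1-(1+\tdl t)^{-1})$ in (\ref{delta}) gives $-z\,m(z)=1+z\delta(z)\tdl(z)/c$, and the same manipulation in (\ref{deltat}) gives $-z\,\tilde m(z)=1+z\delta(z)\tdl(z)$; eliminating the product yields $\tilde m(z)=c\,m(z)+(c-1)/z$, which is precisely the Stieltjes transform of $c\mu+(1-c)\bs d_0$. Since $\tilde m$ satisfies (i)--(iv) by the same reasoning as $m$, it is the Stieltjes transform of a probability measure on $\RR_+$, and injectivity of the transform identifies it with $\tilde\mu$ (so in particular $c\mu+(1-c)\bs d_0\ge0$). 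For the almost sure weak convergence I would (a) reduce $\mu_n\to\mu$ to the pointwise a.s.\ convergence $m_n(z):=N^{-1}\tr(\Sigma_n\Sigma_n^*-zI)^{-1}\to m(z)$ on $\CC_+$, tightness coming from the uniform bound on the spectral norms; (b) prove $m_n(z)-\EE m_n(z)\toaslong0$ by Azuma--Hoeffding applied to the martingale obtained by revealing the columns of $X_n$ one at a time, each increment being $\grandO(1/(N\,\Im z))$ by the resolvent rank inequality, plus Borel--Cantelli; (c) prove $\EE m_n(z)\to m(z)$ by the classical deterministic-equivalent argument --- approximating $(\Sigma_n\Sigma_n^*-zI)^{-1}$ in the relevant normalized traces by a deterministic matrix built from the finite-$n$ analogues $\hat\delta_n,\hat{\tdl}_n$ of $\delta,\tdl$ (Schur complement and rank-one perturbation identities, trace-concentration lemmas, a priori bounds on $\hat\delta_n$), then passing to the limit using continuity and uniqueness of the solution of the system; and (d) deduce the statement for $\tilde\mu_n=(N/n)\mu_n+(1-N/n)\bs d_0$ from $N/n\to c$ together with (a). The principal difficulty is step (c), the quantitative deterministic-equivalent estimate; the rest is either the comparison argument above or routine, if lengthy, applications of standard lemmas.
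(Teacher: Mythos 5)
This proposition is the one statement in the paper that is \emph{not} proved there: it is recalled verbatim from \cite{zhang06} (see also \cite{hachem-loubaton-najim07}), so there is no in-paper proof to compare yours against. That said, the deterministic half of your argument reconstructs precisely the machinery the authors set up immediately after the statement and reuse throughout: your identity $\Im(z\tdl)=\Im(\delta)\int u^2|1+\delta u|^{-2}\tilde\nu(du)$ and the resulting a priori bound $cQQ'<|z|^2$ are Equation \eqref{gamma(z,z*)} and the inequality $1-|z|^2\gamma(z,z^*)\tilde\gamma(z,z^*)>0$; your difference identity $\delta_1-\delta_2=(\delta_1-\delta_2)AB$ with the Cauchy--Schwarz estimate $|AB|<1$ is Identity \eqref{diff} combined with the argument of Lemmas \ref{d2-bounded} and \ref{lm-cvg-delta}; your Jacobian computation for holomorphy is the one in the proof of Proposition \ref{class-S}; and $\tilde m=cm+(c-1)/z$ is \eqref{m-delta}. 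So the uniqueness, holomorphy and Stieltjes-transform parts are correct and in the spirit of the source, and the probabilistic part (martingale concentration plus deterministic equivalents) is the standard route of \cite{zhang06, hachem-loubaton-najim07}.

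Two soft spots, neither fatal. First, the Schauder route to existence is glib as stated: the wedge $\{\delta\in\CC_+:\Im(z\delta)\ge 0\}$ is stable under the map but is neither compact nor does the map obviously send it into a compact set when $\nu$ or $\tilde\nu$ lacks a first moment; the workable versions are the ones you mention in passing (subsequential limits of the finite-$n$ resolvent quantities via Montel, or a contraction on $\{z=\imath y,\ y\ \text{large}\}$ followed by analytic continuation, which is legitimate precisely because of your uniqueness step). Second, under the stated hypotheses only the spectral \emph{measures} of $D_n,\widetilde D_n$ converge; no uniform bound on their spectral norms is assumed, so tightness of $\mu_n$ cannot be obtained that way --- but it is not needed, since a.s.\ pointwise convergence of $m_n$ on $\CC_+$ to the Stieltjes transform of a probability measure already yields weak convergence. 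Likewise, $\tdl(\imath y)\to 0$ as $y\to\infty$ requires a slightly careful dominated-convergence argument when $\tilde\nu$ has no first moment (use $\Re\tdl(\imath y)\ge 0$ to dominate the integrand by $1$, as the paper does along the negative real axis in Proposition \ref{class-S}); your ``direct estimate'' should be spelled out in that form.
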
 

Before going further, we collect some simple facts and identities that will be 
often used in the paper: 

\begin{itemize} 
\item 
Define the function 
\begin{equation}
\label{tdelta-seul} 
F(\tdl, z) = \int\frac{t}
{\displaystyle{-z + 
ct \int \frac{u}{1+u\tilde\delta} \nu(du)} } 
\tilde\nu(dt) \ - \tdl , \quad (\tdl, z) \in \CC_+^2  . 
\end{equation} 
By plugging Equation \eqref{delta} into Equation \eqref{deltat}, we obtain
that the function $\tilde\delta(z)$ can also be defined as the unique solution 
of the equation $F(\tdl, z) = 0$. It will be sometimes more convenient to work 
on this equation instead of the ``split'' form \eqref{delta}--\eqref{deltat}. 

\item The functions $m(z)$ and $\tilde m(z)$ satisfy the identities 
\begin{equation} 
\label{m-delta} 
\begin{split} 
m(z) &= \int \frac{1+\tdl(z)t - \tdl(z)t}{-z(1+\tdl(z)t)} \nu(dt)
= -z^{-1} - c^{-1} \delta(z)\tdl(z) , \quad \text{and} \\ 
\tilde m(z) &= -z^{-1} - \delta(z)\tdl(z) . 
\end{split} 
\end{equation} 

\item For any $z_1, z_2 \in \CC_+$, define 
\begin{align}
\begin{split} 
\gamma(z_1, z_2) &= 
c \int \frac{t^2}
{z_1 z_2(1+\tdl(z_1)t)(1+\tdl(z_2)t)} \nu(dt), \quad \text{and} \\ 
\tilde\gamma(z_1, z_2) &= 
\int \frac{t^2}
{z_1 z_2(1+\delta(z_1)t)(1+\delta(z_2)t)} \tilde\nu(dt) 
\end{split}
\label{gamma} 
\end{align}
(since $|(1+\tdl(z_1)t)(1+\tdl(z_2)t)| \geq \Im\tdl(z_1) \Im\tdl(z_2) t^2$ and 
$|(1+\delta(z_1)t)(1+\delta(z_2)t)| \geq \Im\delta(z_1) \Im\delta(z_2) t^2$, 
the integrability is guaranteed). By the definition of $\tdl(z)$, we have
\begin{align*}
\tdl(z_1) - \tdl(z_2) &= 
\int \frac{ (z_1 - z_2) t + (z_1\delta(z_1) - z_2 \delta(z_2))t^2}
{z_1 z_2( 1 + \delta(z_1) t)(1 + \delta(z_2) t)} \tilde\nu(dt)   
\end{align*}
and by developing the expression of $z_1\delta(z_1) - z_2\delta(z_2)$ 
using~\eqref{delta}, we obtain
\begin{multline} 
\label{diff} 
(1 - z_1 z_2 \gamma(z_1, z_2) \tilde\gamma(z_1, z_2) ) 
(\tdl(z_1) - \tdl(z_2)) \\ = 
(z_1 - z_2) 
\int \frac{t}
{z_1 z_2( 1 + \delta(z_1) t)(1 + \delta(z_2) t)} \, \tilde\nu(dt)  . 
\end{multline} 

Similarly, 
\begin{align*}
\gamma(z, z^*) &= 
c \int \frac{t^2}{|z|^2 |1+\tdl(z)t|^2} \nu(dt), \quad \text{and} \\ 
\tilde\gamma(z, z^*) &= 
\int \frac{t^2}{|z|^2 |1+\delta(z)t|^2} \tilde\nu(dt) 
\end{align*} 
are defined for any $z\in\CC_+$ since $|z (1+\tdl(z)t)|^2 \geq 
(\Im (z\tdl(z)))^2 t^2$. By a derivation similar to above, we have 
for any $z\in\CC_+$ 
\begin{align*}
\Im\tdl(z) &= \frac{\tdl(z) - \tdl(z)^*}{2\imath} \\
&= \Im(z\delta(z)) \tilde\gamma(z, z^*) + 
\Im z \int \frac{t}{|z|^2 |1+\delta(z) t|^2} \tilde\nu(dt) 
\end{align*} 
By writing $\Im(z\delta(z)) = (z\delta(z) - z^*\delta(z)^*)/(2\imath)$ and
by developing this expression using~\eqref{delta}, we get 
\begin{equation}
\label{gamma(z,z*)} 
(1 - | z|^2  \gamma(z, z^*) \tilde\gamma(z, z^*) ) \Im \tdl(z) = 
\Im z 
\int \frac{t}{|z|^2 | 1 + \delta(z) t)|^2} \, \tilde\nu(dt)  . 
\end{equation} 
On $\CC_+$, $\Im\tdl(z) > 0$. Moreover, the integral at the right hand side is 
strictly positive. Hence 
\[
\forall\, z \in \CC_+, 
\quad 1 - |z|^2 \gamma(z, z^*) \tilde\gamma(z, z^*) > 0 . 
\]
This inequality will be of central importance in the sequel. 
\end{itemize}

The two measures introduced by the following proposition share many 
properties with $\mu$ as it will be seen below. They will play an essential 
role in the paper. 
\begin{proposition} 
\label{class-S} 
The functions $\delta(z)$ and $\tdl(z)$ admit the representations
\[
\delta(z) = \int_0^\infty \frac{1}{t-z} \rho(dt) 
\quad \text{and} \quad 
\tilde\delta(z) = \int_0^\infty \frac{1}{t-z} \tilde\rho(dt),  
\quad z \in \CC_+ 
\]
where $\rho$ and $\tilde\rho$ are two Radon positive measures on $\RR_+$ 
such that 
\[
0 < \int_0^\infty \frac{1}{1+t} \rho(dt) < \infty 
\quad \text{and} \quad 
0 < \int_0^\infty \frac{1}{1+t} \tilde\rho(dt) < \infty  .
\]
\end{proposition}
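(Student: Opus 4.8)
The plan is to show that $\delta$ and $\tilde\delta$ belong to the class of functions characterized by the generalized Krein representation recalled above; this will yield representations
\[
\delta(z) = a + \int_0^\infty\frac{1}{t-z}\,\rho(dt),\qquad
\tilde\delta(z) = \tilde a + \int_0^\infty\frac{1}{t-z}\,\tilde\rho(dt)
\]
with $a,\tilde a\ge 0$ and $\rho,\tilde\rho$ Radon positive measures obeying the stated integrability, after which the remaining task is to prove $a=\tilde a=0$. By Proposition~\ref{1st-order} we already have $\delta(z),\tilde\delta(z)\in\CC_+$ for $z\in\CC_+$, so the two ingredients still to be checked are holomorphy on $\CC_+$ and the property $zf(z)\in\CC_+$.

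The latter is immediate: from \eqref{delta}, $z\delta(z)=-c\int t\,(1+\tilde\delta(z)t)^{-1}\,\nu(dt)$, so $\Im(z\delta(z))=c\int t^2\,\Im\tilde\delta(z)\,|1+\tilde\delta(z)t|^{-2}\,\nu(dt)>0$ since $\Im\tilde\delta(z)>0$; applying the same computation to \eqref{deltat} gives $\Im(z\tilde\delta(z))>0$. For holomorphy I would work with the function $F(\tilde\delta,z)$ of \eqref{tdelta-seul}: for $(\tilde\delta,z)\in\CC_+^2$ the denominator of its integrand has strictly negative imaginary part, hence never vanishes, and a routine dominated convergence / Morera argument shows $F$ is jointly holomorphic on $\CC_+^2$; moreover $\tilde\delta(z)$ is the unique zero of $F(\cdot,z)$ in $\CC_+$. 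Differentiating under the integral and simplifying with \eqref{delta} and the definitions of $\gamma$, $\tilde\gamma$, one finds that $\partial_{\tilde\delta}F$ at $(\tilde\delta(z),z)$ equals $z^2\gamma(z,z)\tilde\gamma(z,z)-1$; since $|\gamma(z,z)|\le\gamma(z,z^*)$ and $|\tilde\gamma(z,z)|\le\tilde\gamma(z,z^*)$, this is nonzero by the strict inequality $|z|^2\gamma(z,z^*)\tilde\gamma(z,z^*)<1$ established above. The analytic implicit function theorem then produces, near any $z_0\in\CC_+$, a holomorphic $\CC_+$-valued solution of $F=0$, which by uniqueness coincides with $\tilde\delta$; hence $\tilde\delta$, and then $\delta$ via \eqref{delta}, is holomorphic on $\CC_+$. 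The generalized Krein result now gives the two displayed representations.

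To rule out the constants, note that by dominated convergence in the representations $\delta(\imath y)\to a$ and $\tilde\delta(\imath y)\to\tilde a$ as $y\to+\infty$. On the other hand, \eqref{delta} gives $|\delta(\imath y)|\le c\int t\,\big(y\,|1+\tilde\delta(\imath y)t|\big)^{-1}\,\nu(dt)$, and $|1+\tilde\delta(\imath y)t|\ge t\,\Im\tilde\delta(\imath y)$, so the integrand is bounded by $(y\,\Im\tilde\delta(\imath y))^{-1}$. From the representation, $y\,\Im\tilde\delta(\imath y)=\int_0^\infty y^2(t^2+y^2)^{-1}\,\tilde\rho(dt)$ is nondecreasing in $y$ and strictly positive (as $\tilde\rho\neq 0$), so this upper bound is a finite constant uniformly for $y\ge 1$; the integrand itself tends to $0$ pointwise, so dominated convergence gives $\delta(\imath y)\to 0$, hence $a=0$. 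The symmetric argument gives $\tilde a=0$, completing the proof.

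The step I expect to be the main obstacle is the holomorphy of $\delta$ and $\tilde\delta$: one must justify the joint holomorphy of $F$ under the integral sign, carry out the differentiation identifying $\partial_{\tilde\delta}F$ at the solution with $z^2\gamma(z,z)\tilde\gamma(z,z)-1$, and verify that the local solution furnished by the implicit function theorem genuinely stays in $\CC_+$ so that the uniqueness clause of Proposition~\ref{1st-order} applies. If one prefers, holomorphy can instead be deduced from Vitali's theorem, since $\delta$ and $\tilde\delta$ are locally uniform limits on $\CC_+$ of the holomorphic deterministic-equivalent functions attached to $D_n$ and $\widetilde D_n$; but that route leans on material outside the present excerpt.
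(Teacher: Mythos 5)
Your proposal is correct and follows the paper's own strategy almost exactly: both proofs verify the Herglotz-type properties of $\delta$ and $\tilde\delta$, establish holomorphy on $\CC_+$ by applying the holomorphic implicit function theorem to $F(\tdl,z)$ after identifying $\partial F/\partial\tdl$ at the solution with $z^2\gamma(z,z)\tilde\gamma(z,z)-1$ and bounding it away from zero via $|z|^2\gamma(z,z^*)\tilde\gamma(z,z^*)<1$, and then invoke the generalized Krein representation before killing the additive constants. The only genuine divergence is in that last step: the paper sends $x\to-\infty$ along the negative real axis, using monotone convergence to bound $\int u(1+u\tdl(x))^{-1}\nu(du)$ below and then dominated convergence to get $\tdl(x)\to 0$; you instead send $z=\imath y$ to infinity along the imaginary axis, using the representation itself to show $y\,\Im\tdl(\imath y)$ is nondecreasing and positive, which yields a uniform domination and $\delta(\imath y)\to 0$. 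Both variants are valid and of comparable length; yours has the mild advantage of extracting the needed lower bound directly from the just-established representation rather than from a separate monotone-convergence argument, while the paper's stays entirely within the defining equation $F(\tdl(x),x)=0$.
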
 
\begin{proof} 
One can observe that the function $F(\tdl, z)$ defined in \eqref{tdelta-seul} 
is holomorphic on $\CC_+^2$. Fixing $z_0 \in \CC_+$, a small calculation shows 
that  
\begin{multline*} 
\Bigl| \frac{\partial F}{\partial\tdl} (\tdl, z_0) \Bigr| = 
| 1 - z_0^2 \gamma(z_0,z_0) \tilde\gamma(z_0,z_0) | \\ 
\geq 1 - | z_0^2 \gamma(z_0,z_0) \tilde\gamma(z_0,z_0) | 
\geq 1 - | z_0| ^2 \gamma(z_0,z_0^*) \tilde\gamma(z_0,z_0^*) > 0 
\end{multline*} 
by Inequality \eqref{gamma(z,z*)}.  
The holomorphic implicit function theorem 
\cite[Ch.~1, Th.~7.6]{frit-grau-(livre)02} shows then that $\tilde\delta(z)$ 
is holomorphic in a neighborhood of $z_0$. Since $z_0$ is chosen arbitrarily
in $\CC_+$, we get that $\tilde\delta(z)$ is holomorphic in $\CC_+$. 
Recall that $\Im\tdl(z) > 0$ on $\CC_+$. Since we furthermore have 
\[
\Im(z \tdl(z)) = \Im\delta(z) \int \frac{t^2}{| 1 + \delta(z) t|^2} 
       \tilde\nu(dt) > 0
\]
on $\CC_+$, we get the representation 
\[
\tilde\delta(z) = \tilde a + \int \frac{1}{t-z} \tilde\rho(dt) 
\]
where $\tilde a \geq 0$ and where $\tilde\rho$ satisfies the properties 
given in the statement. Let us show that $\tilde a = 0$. Observe that 
$\tdl(x) \downarrow \tilde a$ when $x$ is a real negative number converging to 
$-\infty$. By a continuation argument, $F(\tdl(x), x) = 0$ for any 
negative value of $x$. As $x\to -\infty$, we get by the monotone convergence
theorem 
\[
I(\tdl(x)) = \int \frac{u}{1 + u\tdl(x)} \nu(du) \uparrow 
I(\tilde a) = \int \frac{u}{1 + u \tilde a} \nu(du) \in (0, \infty] . 
\]
When $x < 0$ is far enough from zero, $I(\tdl(x)) \geq C$ where $C > 0$ is 
a constant, and the Dominated Convergence Theorem (DCT) shows that 
\[
\tdl(x) = \int \frac{t}{-x + ct I(\tdl(x))} \nu(dt) 
\xrightarrow[x\to-\infty]{} 0 . 
\]
A similar argument can be applied to $\delta(z)$. 
\end{proof}

\section{Some elementary properties of $\mu$} 
\label{elementary} 

Before entering the core of the paper, it might be useful to establish
some elementary properties of $\mu$. 

In the asymptotic regime where $N$ is fixed and $n \to\infty$, the matrix
$\Sigma_n\Sigma_n^* - (n^{-1} \tr\widetilde D_n) D_n$ will converge to zero 
when the assumptions of the law of large numbers are satisfied. In our 
asymptotic regime, the following result can therefore be expected. 
Note that this result has its own interest and has no relation with the rest 
of the paper. 
\begin{proposition}
\label{extreme-c} 
Assume that $M_\nu = \int t \nu(dt)$ and $M_{\tilde\nu} = \int t\tilde\nu(dt)$
are both finite. Then 
\[
\mu(dt) \Rightarrow \nu( M_{\tilde\nu}^{-1} \, dt) \quad 
\text{as} \quad c \to 0  
\]
where $\Rightarrow$ denotes the weak convergence of probability measures. 
\end{proposition}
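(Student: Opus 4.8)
The plan is to work with the Stieltjes transform characterization from Proposition~\ref{1st-order} and show that, as $c \to 0$, the pair $(\delta(z), \tilde\delta(z))$ degenerates in a controlled way so that $m(z)$ converges pointwise on $\CC_+$ to the Stieltjes transform of the pushforward $\nu(M_{\tilde\nu}^{-1}\,dt)$, i.e. to the function $z \mapsto \int \frac{1}{M_{\tilde\nu} t - z}\,\nu(dt)$. Since pointwise convergence of Stieltjes transforms on $\CC_+$ is equivalent to weak convergence of the underlying probability measures (all of which are supported on $\RR_+$ and hence tight once we know the limit is a genuine probability measure), this will suffice. Throughout, $z \in \CC_+$ is fixed.

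First I would analyze \eqref{delta}: since $|1+\tilde\delta t| \geq \Im(\tilde\delta) t$ on the support of $\nu$, we have $|\delta(z)| \leq \frac{c}{|z|}\int \frac{1}{\Im \tilde\delta(z)}\,\nu(dt)$, but more usefully I would first bound $\tilde\delta(z)$ uniformly away from $c$: from \eqref{deltat}, $|\tilde\delta(z)| \leq \frac{M_{\tilde\nu}}{|z|\,\Im(\delta(z))}$ if $\delta$ stays bounded, so the two equations must be handled jointly. The cleaner route is to substitute \eqref{delta} into \eqref{deltat} (the form $F(\tilde\delta,z)=0$ from \eqref{tdelta-seul}) and observe that as $c\to 0$ the term $ct\int \frac{u}{1+u\tilde\delta}\nu(du)$ in the denominator of \eqref{tdelta-seul} vanishes \emph{provided} $\tilde\delta$ stays bounded and bounded away from having $1+u\tilde\delta$ small on $\support(\nu)$. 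A short argument using $\Im(z\tilde\delta(z))>0$ and $\Im \tilde\delta(z)>0$ (both established in the proof of Proposition~\ref{class-S}) gives the a priori bound $|\tilde\delta(z)| \leq 1/\Im(z) \cdot (\text{something})$; concretely, from \eqref{deltat} and $|1+\delta t|\geq \Im(z\delta)t/|z|$-type estimates one extracts that $\limsup_{c\to 0}|\tilde\delta(z)|<\infty$ and $\limsup_{c\to 0}|\delta(z)|<\infty$, uniformly on compact subsets of $\CC_+$. Then \eqref{delta} forces $\delta(z) = c\,(\text{bounded}) \to 0$, and feeding this back into \eqref{deltat} gives $\tilde\delta(z) \to \int \frac{t}{-z}\,\tilde\nu(dt) = -M_{\tilde\nu}/z$ by dominated convergence (the integrand $\frac{t}{-z(1+\delta t)} \to \frac{t}{-z}$ pointwise and is dominated using the $\Im$-bounds).

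With $\tilde\delta(z) \to -M_{\tilde\nu}/z$ in hand, I would return to \eqref{ts}: the integrand $\frac{1}{-z(1+\tilde\delta(z)t)} \to \frac{1}{-z(1 - M_{\tilde\nu} t/z)} = \frac{1}{M_{\tilde\nu} t - z}$ pointwise in $t$, and again dominated convergence (same $\Im$-type bound, valid uniformly for small $c$) yields $m(z) \to \int \frac{1}{M_{\tilde\nu} t - z}\,\nu(dt)$, which is exactly the Stieltjes transform of $\nu(M_{\tilde\nu}^{-1}\,dt)$. To conclude weak convergence I invoke the standard equivalence (Stieltjes continuity theorem): pointwise convergence on $\CC_+$ of Stieltjes transforms of probability measures, together with the fact that the limit is itself the Stieltjes transform of a probability measure (manifestly here, as $\nu(M_{\tilde\nu}^{-1}\,dt)$ is a probability measure on $\RR_+$ with finite first moment $M_\nu M_{\tilde\nu}$), implies $\mu \Rightarrow \nu(M_{\tilde\nu}^{-1}\,dt)$.

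The main obstacle is establishing the a priori boundedness of $\delta(z)$ and $\tilde\delta(z)$ as $c \to 0$, uniformly enough to justify the dominated convergence arguments — in particular ruling out the pathology where $1 + \tilde\delta(z) t$ becomes small on $\support(\nu)$ (which would blow up the integral in \eqref{delta} and sabotage the conclusion $\delta \to 0$). I expect this to be handled by exploiting the sign conditions $\Im \tilde\delta(z) > 0$ and $\Im(z\tilde\delta(z)) > 0$ together with the inequality $1 - |z|^2\gamma(z,z^*)\tilde\gamma(z,z^*) > 0$ from \eqref{gamma(z,z*)}: these give, via \eqref{gamma(z,z*)} itself, a lower bound $\Im\tilde\delta(z) \geq \Im(z)\int \frac{t}{|z|^2|1+\delta(z)t|^2}\tilde\nu(dt)$ which, combined with $\gamma(z,z^*) = O(c)$, pins down the magnitude of $1 + \tilde\delta(z) t$ away from zero for $\nu$-almost every $t$. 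Once this is secured the rest is routine dominated convergence, as sketched above.
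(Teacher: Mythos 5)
Your overall strategy --- pointwise convergence of $m(z)$ on $\CC_+$ to the Stieltjes transform of $\nu(M_{\tilde\nu}^{-1}\,dt)$, followed by the Stieltjes continuity theorem --- is exactly the paper's, and your computation of the successive limits of $\delta$, $\tilde\delta$ and $m$ is correct \emph{conditionally} on the dominations you defer. The gap sits precisely where you flag ``the main obstacle'': you never establish the uniform-in-$c$ bounds, and the route you sketch for doing so is shaky. The claim $\gamma(z,z^*)=O(c)$ requires a lower bound on $|1+\tilde\delta(z)t|$ or on $\Im(z\tilde\delta(z))$, which is part of what you are trying to prove, so that argument is circular; and an a priori bound on $|\tilde\delta(z)|$ via $|1+\delta(z)t|\geq \Im(\delta(z))\,t$ degenerates as $c\to0$ precisely because $\delta\to0$. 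Your worry that $1+\tilde\delta(z)t$ small on $\support(\nu)$ could ``blow up'' the integral in \eqref{delta} is also unfounded, for the reason below.

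The missing idea is a one-line observation that dissolves the obstacle entirely. Since $\Im z>0$ and $\Im(z\tilde\delta(z))>0$ (property \emph{iv)} of Stieltjes transforms of measures on $\RR_+$, established in the proof of Proposition \ref{class-S}), one has for every $u\geq 0$
\[
|z(1+\tilde\delta(z)u)| \;\geq\; \Im\bigl(z(1+\tilde\delta(z)u)\bigr) \;=\; \Im z + u\,\Im(z\tilde\delta(z)) \;\geq\; \Im z ,
\]
so \eqref{delta} gives $|\delta(z)|\leq c\,M_\nu/\Im z\to 0$ directly, with no need to bound $\tilde\delta$ or to control $1+\tilde\delta(z)u$ away from zero. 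The identical inequality $|z(1+\delta(z)t)|\geq \Im z$ then dominates the integrand of \eqref{deltat} by $t/\Im z$ (integrable since $M_{\tilde\nu}<\infty$), yielding $\tilde\delta(z)\to -M_{\tilde\nu}/z$, and dominates the integrand of \eqref{ts} by $1/\Im z$, yielding the limit of $m(z)$. With this substitution your argument closes and coincides with the paper's proof; the machinery involving $1-|z|^2\gamma(z,z^*)\tilde\gamma(z,z^*)>0$ is not needed.
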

\begin{proof} 
For any $u \geq 0$ and any $z \in \CC_+$, 
$| z(1+\tdl(z)u) | \geq  \Im(z(1+\tdl(z)u))  
\geq \Im(z)$, hence $|\delta(z)| \leq c M_\nu / \Im(z)$, which implies that
$\delta(z) \to 0$ as $c\to 0$. Similarly, $| z(1+\delta(z)t) | \geq  \Im(z)$
for any $t \geq 0$ and any $z \in \CC_+$, hence 
$\tdl(z) \to - M_{\tilde\nu} / z$ by the DCT. Invoking the DCT again, we get 
that 
\[
m(z) \xrightarrow[c\to 0]{} 
\int \frac{1}{M_{\tilde\nu} t - z} \nu(dt) 
= 
\int \frac{1}{t - z} \nu(M_{\tilde\nu}^{-1} \, dt) 
\]
which shows the result. 
\end{proof} 

We now characterize $\mu(\{0\})$. Intuitively, 
$\rank(\Sigma_n) \simeq \min[ N(1-\nu(\{0\})), n(1-\tilde\nu(\{0\})) ]$ 
and $\mu(\{0\}) \simeq 1 - \rank(\Sigma_n)/N$ for large $n$.  
The following result is therefore expected: 
\begin{proposition}
\label{mu(0)}
$\mu(\{ 0 \}) = 
1 - \min[ 1-\nu(\{0\}), c^{-1}(1-\tilde\nu(\{0\})) ]$. 
\end{proposition}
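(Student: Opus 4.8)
The plan is to carry the whole computation onto the negative real axis, where $\delta$ and $\tilde\delta$ are real, positive, analytic and monotone, so that every limit exchange reduces to monotone or dominated convergence. First I would record the elementary fact that if $\pi$ is a probability measure on $\RR_+$ with Stieltjes transform $f$, then writing $(-x)f(x)=\int_0^\infty \frac{|x|}{t+|x|}\,\pi(dt)$ and letting $x\uparrow 0$ gives $\pi(\{0\})=\lim_{x\uparrow 0}(-x)f(x)$. Applying this to $\mu$ and using identity \eqref{m-delta} yields
\[
\mu(\{0\}) = 1 + c^{-1} L, \qquad L := \lim_{x\uparrow 0} x\,\delta(x)\,\tilde\delta(x),
\]
provided this last limit exists. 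By the continuation argument already invoked in the proof of Proposition~\ref{class-S} together with the integral representations stated there, $\delta$ and $\tilde\delta$ extend analytically to $(-\infty,0)$, are strictly positive and nondecreasing there, and the master equations \eqref{delta}--\eqref{deltat} persist on $(-\infty,0)$ (the integrands being finite since $1+\tilde\delta(x)t\ge1$ and $1+\delta(x)u\ge1$). I then set $\delta_*:=\lim_{x\uparrow0}\delta(x)\in(0,\infty]$ and $\tilde\delta_*:=\lim_{x\uparrow0}\tilde\delta(x)\in(0,\infty]$.

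The heart of the proof is two expressions for the product. Multiplying \eqref{deltat} by $\delta(x)$ and using $\frac{\delta u}{1+\delta u}=1-\frac{1}{1+\delta u}$, and symmetrically multiplying \eqref{delta} by $\tilde\delta(x)$, one obtains, for every $x<0$,
\[
x\,\delta(x)\,\tilde\delta(x) = -1 + \int \frac{\tilde\nu(du)}{1+\delta(x)u} = -c + c\int \frac{\nu(dt)}{1+\tilde\delta(x)t}.
\]
Letting $x\uparrow0$, dominated convergence (the integrands are bounded by $1$ and monotone in $x$) shows that $L$ exists and
\[
L = -1 + \int \frac{\tilde\nu(du)}{1+\delta_* u} = -c + c\int \frac{\nu(dt)}{1+\tilde\delta_* t},
\]
where $\int \frac{\tilde\nu(du)}{1+\delta_* u}$ is to be read as $\tilde\nu(\{0\})$ when $\delta_*=\infty$, and similarly for the other integral. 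Next I would establish the \emph{dichotomy: at most one of $\delta_*,\tilde\delta_*$ is finite}. If both were finite, then letting $x\uparrow0$ in $x\,\delta(x)=-c\int \frac{t\,\nu(dt)}{1+\tilde\delta(x)t}$ (the integrand being bounded by $1/\tilde\delta(x_1)$ on $[x_1,0)$, by monotonicity of $\tilde\delta$) would give $0=-c\int \frac{t\,\nu(dt)}{1+\tilde\delta_* t}<0$, the strict inequality being exactly where the hypothesis $\nu\ne\bs d_0$ enters.

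To conclude, since $\int \frac{\tilde\nu(du)}{1+\delta_* u}\ge\tilde\nu(\{0\})$ and $\int \frac{\nu(dt)}{1+\tilde\delta_* t}\ge\nu(\{0\})$, the two formulas give $L\ge\tilde\nu(\{0\})-1$ and $L\ge c(\nu(\{0\})-1)$, hence $L\ge\max[\tilde\nu(\{0\})-1,\,c(\nu(\{0\})-1)]$. For the reverse inequality, the dichotomy forces $\delta_*=\infty$, in which case the first formula gives $L=\tilde\nu(\{0\})-1$, or $\tilde\delta_*=\infty$, in which case the second gives $L=c(\nu(\{0\})-1)$; either way $L\le\max[\tilde\nu(\{0\})-1,\,c(\nu(\{0\})-1)]$. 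Therefore
\[
L=\max\bigl[\,\tilde\nu(\{0\})-1,\ c(\nu(\{0\})-1)\,\bigr],
\]
and, dividing inside the maximum by $c>0$,
\[
\mu(\{0\}) = 1 + c^{-1}L = 1 - \min\bigl[\,1-\nu(\{0\}),\ c^{-1}(1-\tilde\nu(\{0\}))\,\bigr].
\]
I expect the only real obstacle to be bookkeeping: certifying that \eqref{m-delta} and the master equations survive the analytic continuation to $(-\infty,0)$ and supplying the (routine, once monotonicity is used) domination needed for $x\uparrow0$, together with the dichotomy step, which is the single place the standing assumptions $\nu,\tilde\nu\ne\bs d_0$ are genuinely needed. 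Everything else is algebra.
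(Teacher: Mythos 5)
Your proof is correct, but it takes a genuinely different route from the paper's. The paper works along the imaginary axis, sets up a proof by contradiction to show that $\tilde\rho(\{0\})>0$ under the strict inequality $1-\nu(\{0\})<c^{-1}(1-\tilde\nu(\{0\}))$, treats the symmetric case by swapping roles, and then disposes of the equality case by approximating $\tilde\nu$ and invoking the weak continuity of $\mu$ with respect to $\tilde\nu$ (a result quoted from Zhang's thesis). You instead work on the negative real axis, where monotonicity of $\delta$ and $\tdl$ makes every limit interchange routine, and you extract the exact two-sided identity
\[
x\,\delta(x)\,\tdl(x)=-1+\int\frac{\tilde\nu(du)}{1+\delta(x)u}=-c+c\int\frac{\nu(dt)}{1+\tdl(x)t},
\]
which, combined with your dichotomy (at most one of $\delta_*,\tdl_*$ finite, the place where $\nu\neq\bs d_0$ and $\tilde\nu\neq\bs d_0$ enter), yields $L=\max[\tilde\nu(\{0\})-1,\,c(\nu(\{0\})-1)]$ in one stroke. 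What your approach buys is uniformity and self-containedness: no case split on the sign of $1-\nu(\{0\})-c^{-1}(1-\tilde\nu(\{0\}))$, and in particular no external continuity theorem for the boundary case, which is the least transparent step of the paper's argument. What the paper's approach buys is the explicit by-product that $\tilde\rho(\{0\})>0$ (equivalently, in your language, $\tdl_*=\infty$) precisely when $1-\nu(\{0\})<c^{-1}(1-\tilde\nu(\{0\}))$, information about the measures $\rho,\tilde\rho$ that is of some independent interest. The ingredients you rely on — analytic continuation of $\delta$, $\tdl$, $m$ and of the master equations to $(-\infty,0)$ — are already established or used in the proof of Proposition \ref{class-S}, so your bookkeeping concerns are indeed only bookkeeping.
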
 
\begin{proof}
From the general expression of a Stieltjes Transform of a probability measure, 
it is easily seen using the DCT that 
$\mu(\{0\}) = \lim_{y\downarrow 0} (-\imath y m(\imath y))$. 
Moreover, since 
$| y (t - \imath y)^{-1}| \leq (t^2 + 1)^{-1/2}$ when $|y| \leq 1$, 
the DCT and Proposition \ref{class-S} show that 
$\tilde\rho(\{0\}) = \lim_{y\downarrow 0} (-\imath y \tdl(\imath y))$. \\ 
Let us write $\nu = \nu(\{0\}) \bs d_0 + \nu'$ and 
$\tilde\nu = \tilde\nu(\{0\}) \bs d_0 + \tilde\nu'$, and let us assume that
$1-\nu(\{0\}) < c^{-1}(1-\tilde\nu(\{0\}))$, or equivalently, that 
$\nu'(\RR_+) < c^{-1} \tilde\nu'(\RR_+)$. In this case, we will show that 
$\tilde\rho(\{0\}) > 0$. That being true, we get 
\[
\mu(\{ 0 \}) = 
\lim_{y\downarrow 0} (-\imath y m(\imath y) ) = \nu(\{0\}) + 
\lim_{y\downarrow 0} 
        \int \frac{1}{1 + \tdl(\imath y) t} \nu'(dt) 
= \nu(\{0\}) 
\]
(since $\Re(\tdl(\imath y)) > 0$, see below, the integrand above 
is bounded in absolute value by $1$, and furthermore, it converges
to $0$ for any $t > 0$ due to the fact that $\tilde\rho(\{0\}) > 0$). \\
We assume that $\tilde\rho(\{0\}) = 0$ and raise a contradiction. 
The equation $F(\tdl, \imath y ) = 0$ for $y > 0$ can be rewritten as  
\[
\int\frac{t}
{\displaystyle{-\imath y\tdl(\imath y) + 
ct \int \frac{u\tdl(\imath y)}{1+u\tdl(\imath y)} \nu'(du)} } 
\tilde\nu'(dt) 
= 1 . 
\]
We have 
\[
\Re( \tdl(\imath y) ) = \Re \int \frac{1}{t-\imath y} \tilde\rho(dt)
= \int \frac{t}{t^2 + y^2} \tilde\rho(dt) > 0,  
\]
and $\lim_{y\to 0} \Re(\tdl(\imath y) ) \in (0, \infty]$ by the 
monotone convergence theorem. Let 
\[
I(y) = \int \frac{u\tdl(\imath y)}{1+u\tdl(\imath y)} \nu'(du) . 
\]
Writing $\tdl = \tdl(\imath y)$, we have 
\[
\Re(I(y)) = \int 
\frac{u (\Re\tdl) (1+u \Re\tdl) + (u \Im\tdl)^2}
{(1 + u \Re\tdl)^2 + (u \Im\tdl)^2} \nu'(du) 
\]
whose $\liminf$ is positive as $y \downarrow 0$. Furthermore, we have for 
$y > 0$  
\[
\Re(-\imath y\tdl(\imath y)) = 
\Re\int \frac{-\imath y}{t-\imath y} \tilde\rho(dt) 
= \int \frac{y^2}{t^2+y^2} \tilde\rho(dt) > 0  
\]
hence $\liminf_{y\downarrow 0} | -\imath y \tdl(\imath y) + c t I(y) | 
\geq c t \liminf_{y\downarrow 0} \Re I(y)$. Consequently, we have by 
the assumption $\tilde\rho(\{0\}) = 0$ and the DCT
\[
\int\frac{t}
{-\imath y\tdl(\imath y) + ct I(y)} 
\tilde\nu'(dt) 
- 
\frac{\tilde\nu'(\RR_+)}{c I(y)} 
\xrightarrow[y\downarrow 0]{} 0 . 
\]
This shows that $\lim_{y\downarrow 0} I(y) = c^{-1} \tilde\nu'(\RR_+)$. 
But since $\Re(\tdl(\imath y)) > 0$, 
$| u\tdl(\imath y) ( 1 + u\tdl(\imath y))^{-1} | \leq 1$ for $u\geq 0$ hence
$| I(y) | \leq \nu'(\RR_+)$. Therefore, 
$c^{-1} \tilde\nu'(\RR_+) \leq \nu'(\RR_+)$ which contradicts the assumption. 
\\
If $\nu'(\RR_+) > c^{-1} \tilde\nu'(\RR_+)$, we replace $\mu$, $m(z)$ and
$\tdl(z)$ with $\tilde\mu$, $\tilde m(z)$ and $\delta(z)$ respectively in the
previous argument. \\
To deal (briefly) with the case $\nu'(\RR_+) = c^{-1} \tilde\nu'(\RR_+)$, we 
use the fact that $\mu$ is continuous with respect to $\tilde\nu$ in the weak 
convergence topology (see \cite[Chap. 4]{zhang06}). 
By approximating $\tilde\nu$ by a sequence 
$\tilde\nu_k = \tilde\nu_k(\{0\}) + \tilde\nu'_k$ 
such that $\nu'(\RR_+) < c^{-1} \tilde\nu'_k$, we are led back to the first 
part of the proof. The result is obtained by continuity. 
\end{proof} 

\section{Density and support}  
\label{density-support} 

\subsection{Existence of a continuous density} 
\label{density-exists} 

This paragraph is devoted to establishing the following theorem: 
\begin{theorem}
\label{th-m(x)} 
For all $x \in \RR_* = \RR - \{ 0 \}$, the nontangential limit 
$\lim_{z \in \CC_+ \to x} m(z)$ exists. Denoting by $m(x)$ this limit, 
the function $\Im m(x)$ is continuous on $\RR_*$, and $\mu$ has
a continuous derivative $f(x) = \pi^{-1} \Im m(x)$ on $\RR_*$. \\
Similarly, the nontangential limits 
$\lim_{z \in \CC_+ \to x} \delta(z)$ and $\lim_{z \in \CC_+ \to x} \tdl(z)$ 
exist. Denoting respectively by $\Im\delta(x)$ and $\Im\tdl(x)$ these limits, 
the functions $\Im\delta(x)$ and $\Im\tdl(x)$ are both continuous on 
$\RR_*$, and both $\rho$ and $\tilde\rho$ have continuous derivatives 
on $\RR_+$. Finally $\support(\rho) \cap \RR_* = \support(\tilde\rho)
\cap \RR_* = \support(\mu) \cap \RR_*$. 
\end{theorem}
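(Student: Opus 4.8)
The plan is to adapt the strategy of Silverstein and Choi \cite{sil-choi95}; since no explicit inverse of the Stieltjes transform $m$ is available in this generality, I would run it through the scalar equation $F(\tdl,z)=0$ of \eqref{tdelta-seul} and the Nevanlinna-type representations of $\delta,\tdl$ from Proposition \ref{class-S}. Fix $x\in\RR_*$. The case $x<0$ is immediate: $\support(\rho),\support(\tilde\rho),\support(\mu)\subset\RR_+$, so $\delta$, $\tdl$ and $m=-z^{-1}-c^{-1}\delta\tdl$ are real-analytic near $x$, all the asserted limits exist with $\Im\delta(x)=\Im\tdl(x)=\Im m(x)=0$, and the support identity holds since both sides are empty. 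So assume henceforth $x>0$.

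\emph{A priori control near $x$ (the main obstacle).} The crux is to show that, as $z\to x$ in $\CC_+$, $\delta(z)$ and $\tdl(z)$ stay bounded and the effective denominators $1+\delta(z)t$ (for $t\in\support(\tilde\nu)$) and $1+\tdl(z)t$ (for $t\in\support(\nu)$) stay away from $0$ uniformly enough to keep the integrands of \eqref{delta}, \eqref{deltat}, \eqref{ts} uniformly integrable. I would obtain this from the defining equations, from $\Im(z\delta(z))>0$ and $\Im(z\tdl(z))>0$ on $\CC_+$ (proof of Proposition \ref{class-S}), and above all from $1-|z|^2\gamma(z,z^*)\tilde\gamma(z,z^*)>0$, which written as
\[
c\left(\int\frac{t^2\,\nu(dt)}{|1+\tdl(z)t|^2}\right)\left(\int\frac{t^2\,\tilde\nu(dt)}{|1+\delta(z)t|^2}\right)<|z|^2
\]
prevents the two families of denominators from degenerating simultaneously; inserting this back into \eqref{delta}--\eqref{deltat} through a case analysis (on whether $|\delta(z)|$ or $|\tdl(z)|$ threatens to blow up) rules out unboundedness and pins the denominators off $0$.

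\emph{Boundary limits and their uniqueness.} Given these estimates, any sequence $z_k\to x$ in $\CC_+$ has a subsequence along which $\delta(z_k)\to\delta_0$, $\tdl(z_k)\to\tdl_0$ with $\Im\delta_0,\Im\tdl_0\ge0$; passing to the limit in \eqref{delta}--\eqref{deltat} (after ruling out $1+\delta_0t_0=0$ for $t_0\in\support(\tilde\nu)$ and $1+\tdl_0t_0=0$ for $t_0\in\support(\nu)$, by the positivity arguments of the proof of Proposition \ref{class-S}) shows $(\delta_0,\tdl_0)$ solves the limiting system at $z=x$. For uniqueness of the limit---whence the existence of $\delta(x):=\lim_{z\to x}\delta(z)$, $\tdl(x)$ and $m(x)=-x^{-1}-c^{-1}\delta(x)\tdl(x)$---I would use \eqref{diff}, which controls $|\tdl(z_1)-\tdl(z_2)|$ by $|z_1-z_2|$ times a bounded factor over $|1-z_1z_2\gamma(z_1,z_2)\tilde\gamma(z_1,z_2)|$ and hence makes $\tdl$ Cauchy as $z\to x$ wherever $1-z_1z_2\gamma(z_1,z_2)\tilde\gamma(z_1,z_2)$ stays off $0$; this, together with the implicit function theorem argument of Proposition \ref{class-S} (applicable wherever $\partial F/\partial\tdl\ne0$), settles all but the residual (edge) values of $\tdl_0$, which are real and at which uniqueness of the relevant solution of $F(\cdot,x)=0$ follows from a monotonicity analysis of $\tdl\mapsto F(\tdl,x)$. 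The uniformity of the bounds above makes the limits nontangential, indeed full, as $z\to x$.

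\emph{Continuity, density, and equality of supports.} On a compact $K\subset\RR_*$, the bounds above make $z\mapsto(\delta(z),\tdl(z))$ uniformly bounded on a complex neighbourhood of $K$, hence normal; with the previous paragraph this yields continuous extensions of $\delta,\tdl,m$ and of their imaginary parts to $\RR_*$. Stieltjes inversion over such $K$ (with $\Im m(x+\imath v)\to\Im m(x)$ uniformly on $K$) then shows $\mu|_{\RR_*}$ is absolutely continuous with continuous density $f(x)=\pi^{-1}\Im m(x)$, and likewise $\rho,\tilde\rho$ have continuous densities $\pi^{-1}\Im\delta,\pi^{-1}\Im\tdl$ on $\RR_*$; since $|y(t-\imath y)^{-1}|\le(1+t^2)^{-1/2}$ for $|y|\le1$, the dominated convergence argument used for $\mu(\{0\})$ in the proof of Proposition \ref{mu(0)} extends the latter two densities continuously to $t=0$, hence to all of $\RR_+$. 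Finally, letting $z\to x$ in \eqref{deltat}, \eqref{delta}, \eqref{ts} and taking imaginary parts gives, for $x>0$,
\begin{align*}
\Im\tdl(x)&=\Im\delta(x)\int\frac{t^2\,\tilde\nu(dt)}{x\,|1+\delta(x)t|^2},\\
\Im\delta(x)&=c\,\Im\tdl(x)\int\frac{t^2\,\nu(dt)}{x\,|1+\tdl(x)t|^2},\\
\Im m(x)&=\Im\tdl(x)\int\frac{t\,\nu(dt)}{x\,|1+\tdl(x)t|^2},
\end{align*}
each integral being finite (the denominators being off $0$) and strictly positive (as $\nu\ne\bs d_0$ and $\tilde\nu\ne\bs d_0$). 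Hence the relatively open sets $\{\Im m>0\}$, $\{\Im\delta>0\}$, $\{\Im\tdl>0\}$ of $\RR_*$ coincide, and since on $\RR_*$ the supports of $\mu,\rho,\tilde\rho$ are the closures (in $\RR_*$) of these sets---the continuous densities vanishing off them---we conclude $\support(\rho)\cap\RR_*=\support(\tilde\rho)\cap\RR_*=\support(\mu)\cap\RR_*$.
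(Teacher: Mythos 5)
Your architecture is the same as the paper's (a priori boundedness of $\delta,\tdl$ near $x$, subsequential compactness, uniqueness of the boundary limit, then Stieltjes inversion and a Fatou argument for the equality of supports), and the first and last parts are in order. The genuine gap is in the uniqueness step. You propose to extract the Cauchy property of $\tdl(z)$ as $z\to x$ from \eqref{diff} ``wherever $1-z_1z_2\gamma(z_1,z_2)\tilde\gamma(z_1,z_2)$ stays off $0$,'' but nothing established so far guarantees this: the only available control is $1-|z|^2\gamma(z,z^*)\tilde\gamma(z,z^*)>0$ on $\CC_+$, and by \eqref{gamma(z,z*)} this quantity tends to $0$ as $z\to x$ at every $x$ where $\Im\tdl(x)>0$ (the right-hand side carries the factor $\Im z$) --- that is, precisely on the support, where the theorem has content. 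Your two fallbacks do not close this: the implicit function theorem needs a candidate limit $\tdl_0$ at which $\partial F/\partial\tdl\neq 0$, which is exactly what is in question for an arbitrary subsequential limit; and a real monotonicity analysis of $\tdl\mapsto F(\tdl,x)$ cannot separate two subsequential limits with positive imaginary part.

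The missing idea (Lemma \ref{lm-cvg-delta} of the paper) is to take two sequences $z_n,\underline z_n\to x$ whose subsequential limits $(\bs\delta,\bs\tdl)$ and $(\underline{\bs\delta},\underline{\bs\tdl})$ differ, and to show that this very discrepancy forces $\liminf_n|1-z_n\underline z_n\gamma(z_n,\underline z_n)\tilde\gamma(z_n,\underline z_n)|>0$: one writes $\Re(z_n\underline z_n\gamma\tilde\gamma)=\frac14(\chi_{1,n}-\chi_{2,n})$ by polarization, bounds $\chi_{1,n}<4$ using $1-|z|^2\gamma(z,z^*)\tilde\gamma(z,z^*)>0$ at $z_n$ and $\underline z_n$ separately, and bounds $\liminf_n\chi_{2,n}$ below by a quadratic form in the difference matrix $\Delta$ built from $\underline{\bs\tdl}-\bs\tdl^*$ and $\underline{\bs\delta}-\bs\delta^*$, which is strictly positive because the moment matrices $\int H(t)\,\nu(dt)$ and $\int G(u)\,\tilde\nu(du)$ are positive definite when $\nu$ and $\tilde\nu$ are not Dirac measures (the Dirac case being dispatched separately via \cite{sil-choi95}). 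Since the right-hand side of \eqref{diff} tends to $0$ --- this is where your boundedness estimates, i.e.\ the analogues of Lemmas \ref{d2-bounded} and \ref{m-borne}, actually enter, through Cauchy--Schwarz --- one obtains a contradiction. Without an argument of this kind your plan does not yield existence of the nontangential limit at points of the support, which is the heart of the theorem.
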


Since $\tilde\mu = c\mu + (1-c) \bs d_0$, it is obvious that we can replace 
$m$ with $\tilde m$ in the statement of the theorem.  

As soon as the existence of the three limits as $z \in \CC_+ \to x$ are
established, we know from the so called Stieltjes inversion formula that the 
densities exist (see \cite{sil-choi95}[Th. 2.1]). By a simple passage to the
limit argument (\cite[Th. 2.2]{sil-choi95}), we also know that these densities
are continuous. \\ 
To prove the theorem, we first prove that $\lim_{z \in \CC_+ \to x} \delta(z)$ 
and $\lim_{z \in \CC_+ \to x} \tdl(z)$ both exist for all $x \in \RR_*$ 
(Lemmas \ref{d2-bounded} to \ref{lm-cvg-delta}). 
This shows that both $\rho$ and $\tilde\rho$ have densities on $\RR_*$. 
Lemma \ref{lm-equiv-cvg} shows then that $\lim_{z \in \CC_+ \to x} m(z)$ 
exists, and furthermore, that the intersections of the supports of 
$\mu$, $\rho$ and $\tilde\rho$ with $\RR_*$ coincide.

\begin{lemma}
\label{d2-bounded}
$|\delta(z)|$ and $|\tdl(z)|$ are bounded on any bounded region of $\CC_+$ 
lying at a positive distance from the imaginary axis. 
\end{lemma}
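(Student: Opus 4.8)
The plan is to work from the fixed-point equation $F(\tdl,z)=0$ of \eqref{tdelta-seul} and, once $\tdl$ is controlled, to read off the bound on $\delta$ from \eqref{delta}. Fix a bounded region $K\subset\CC_+$ lying at distance $\eta>0$ from the imaginary axis, so that $|\Re z|\geq\eta$ and $|z|\leq R$ for all $z\in K$. The first observation is that $\Im\tdl(z)>0$ and $\Im(z\tdl(z))>0$ on $\CC_+$, both of which were established in the proof of Proposition~\ref{class-S}; the second of these already gives $\Re\tdl(z)>0$ when $\Re z<0$, but in $K$ we only have $|\Re z|\geq\eta$, so this sign information is not by itself enough and a quantitative argument is required.

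First I would bound $\Im\tdl$ from below, away from zero, uniformly on $K$. Starting from \eqref{gamma(z,z*)},
\[
(1 - |z|^2\gamma(z,z^*)\tilde\gamma(z,z^*))\,\Im\tdl(z)
= \Im z\int\frac{t}{|z|^2|1+\delta(z)t|^2}\,\tilde\nu(dt),
\]
we already know the left factor lies in $(0,1]$. It suffices to produce a uniform upper bound on $\Im\tdl$ (hence on the right side divided through) and a uniform lower bound on the right-hand integral. The upper bound comes from the representation $\tdl(z)=\int (t-z)^{-1}\tilde\rho(dt)$ of Proposition~\ref{class-S}: on $K$ one has $|\Im\tdl(z)|\leq\int |t-z|^{-1}\tilde\rho(dt)$, and since $|t-z|\geq\Im z$ this is not immediately uniform, so instead I would use that $|t-z|^{-1}\leq (1+t)^{-1}\cdot C_K$ where $C_K$ depends only on $\eta$ and $R$ via $|\Re z|\geq\eta$ (the distance from $z$ to $[0,\infty)$ is bounded below), giving $|\tdl(z)|\leq C_K\int(1+t)^{-1}\tilde\rho(dt)<\infty$ uniformly on $K$. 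The same argument applied to $\delta(z)=\int(t-z)^{-1}\rho(dt)$ gives $|\delta(z)|\leq C_K\int(1+t)^{-1}\rho(dt)<\infty$ uniformly on $K$, which is exactly the bound on $|\delta|$ claimed, and also yields a uniform \emph{upper} bound $|1+\delta(z)t|\le 1+C_K' t$ in the denominator of the right-hand integral in \eqref{gamma(z,z*)}.

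Thus the only real work is the uniform lower bound on $|\tdl(z)|$, equivalently ruling out $\tdl(z)\to 0$ along a sequence $z_k\to z_0\in K$ (note $z_0\in\CC_+$ still, since $K$ is a \emph{bounded} region at positive distance from the imaginary axis — wait, $K$ need only be bounded and away from the imaginary axis, so its closure may meet the real axis). This is where I expect the main obstacle: near a point $x_0\neq 0$ of the real axis one cannot invoke $\Im\tdl>0$. I would handle it by a compactness/continuity argument combined with the equation: if $\tdl(z_k)\to 0$ then by dominated convergence $\int\frac{u}{1+u\tdl(z_k)}\nu(du)\to M_\nu$ or diverges, and plugging into $F(\tdl(z_k),z_k)=0$ rewritten as $\int\frac{t}{-z_k+ct\int\frac{u}{1+u\tdl}\nu(du)}\tilde\nu(dt)=\tdl(z_k)\to 0$ forces the integrand to vanish, contradicting $z_k\to z_0\neq 0$; the argument splits according to whether $\int\frac{u}{1+u\tdl}\nu(du)$ stays bounded (then the integrand tends to $\int t/(-z_0)\,\tilde\nu(dt)\neq 0$) or not (then $\tdl(z_k)$ cannot go to $0$ because $\Re\tdl$ controls the growth, as in the proof of Proposition~\ref{class-S}). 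Feeding the resulting uniform bound $|\tdl(z)|\geq c_K>0$ back into \eqref{gamma(z,z*)} bounds $\Im\tdl$ below, which via $\tdl(z)=\int(t-z)^{-1}\tilde\rho(dt)$ and the subordination structure of $\delta$ through \eqref{delta} closes the argument; but since the uniform upper bounds on $|\delta|$ and $|\tdl|$ obtained above are already the full content of the lemma, the lower bounds are in fact not needed here, and the proof is complete once the two displayed uniform estimates $|\delta(z)|\le C_K\int(1+t)^{-1}\rho(dt)$ and $|\tdl(z)|\le C_K\int(1+t)^{-1}\tilde\rho(dt)$ are in hand.
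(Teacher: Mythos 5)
There is a genuine gap, and it sits exactly at the step you flag as unproblematic. Your uniform upper bound on $|\tdl(z)|$ rests on the claim that $|t-z|^{-1}\leq C_K(1+t)^{-1}$ for all $t\geq 0$ and all $z$ in the region, ``since the distance from $z$ to $[0,\infty)$ is bounded below via $|\Re z|\geq\eta$.'' This is false: the hypothesis $|\Re z|\geq\eta$ keeps $z$ away from the \emph{imaginary} axis (hence away from $0$), not away from the positive real half-line. A bounded region of $\CC_+$ at positive distance from the imaginary axis — say $\{z:1<\Re z<2,\ 0<\Im z<1\}$ — has closure meeting $[1,2]\subset[0,\infty)$, and for $z$ near such a point $x_0$ and $t$ near $x_0$ the quantity $|t-z|$ is arbitrarily small while $(1+t)^{-1}$ is of order one. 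Since at this stage of the paper nothing is known about $\support(\tilde\rho)$ (its identification with $\support(\mu)$ on $\RR_*$ comes only later and \emph{uses} this lemma), you cannot rule out mass of $\tilde\rho$ near $x_0$, and the representation $\tdl(z)=\int(t-z)^{-1}\tilde\rho(dt)$ gives no uniform control there. You do notice in a parenthetical that the closure of the region may meet the real axis, but you apply that observation only to your (unnecessary) lower-bound discussion, not to the upper bound on which the whole proposal rests. The case $z\to x_0\in\RR_*$ is precisely the nontrivial content of the lemma; away from the real axis the statement is immediate.

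The paper's proof avoids this entirely: it first notes $|\delta(z)|\leq\sqrt{c}\,\gamma(z,z^*)^{1/2}$ and $|\tdl(z)|\leq\tilde\gamma(z,z^*)^{1/2}$ together with $|z|^2\gamma(z,z^*)\tilde\gamma(z,z^*)<1$, which via \eqref{m-delta} bounds $\tilde m$ uniformly on the region; then, assuming $|\tdl(z_k)|\to\infty$ along a sequence, it deduces $\tilde\gamma(z_k,z_k^*)\to\infty$, hence $\gamma(z_k,z_k^*)\to 0$ and $\delta(z_k)\to 0$, and reaches a contradiction by expressing $\tdl(z_0)-\tdl(z_k)$ through the identity \eqref{diff} with a prefactor bounded away from zero (Cauchy--Schwarz plus the strict inequality above) and a right-hand side that stays bounded. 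If you want to repair your argument, you need some mechanism of this kind that exploits the coupling between $\delta$ and $\tdl$ through the master equations; the Stieltjes-transform representation alone cannot give a bound uniform up to the real axis. Also note that the lower bound on $|\tdl|$ you spend the last part of the proposal on is not part of the statement and can be dropped.
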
 
\begin{proof}
We first observe that for any $z \in \CC_+$, 
\begin{align*}
| \delta(z) | &\leq c \Bigl( 
\int \frac{t^2}{|z|^2 | 1 + \tdl(z) t |^2} \nu(dt) \Bigr)^{1/2} 
= \sqrt{c} \gamma(z,z^*)^{1/2} , \\
|\tdl(z) | &\leq \tilde\gamma(z,z^*)^{1/2}, 
\end{align*}
and we recall that $0 < |z|^2 \gamma(z,z^*) \tilde\gamma(z,z^*) < 1$. 
Using \eqref{m-delta}, we therefore get that 
$\sup_{z\in{\cal R}} | \tilde m(z) | < \infty$ where $\cal R$ is the region
alluded to in the statement of the lemma. \\
We now assume that $\sup_{z \in \cal R} | \tdl(z) | = \infty$ and raise a 
contradiction, the case where $\sup_{z \in \cal R} |\delta(z)|$ being treated 
similarly. By assumption, there exists a sequence $z_0, z_1, \ldots \in 
\cal R$ such that $| \tdl(z_k) | \to\infty$. By the inequalities above, we
get that $\tilde\gamma(z_k,z_k^*) \to \infty$, hence $\gamma(z_k,z_k^*) \to 0$ 
and therefore $\delta(z_k) \to 0$. In parallel, we have  
\begin{align*} 
z_0 \tilde m(z_0) - z_k \tilde m(z_k) &= 
\int\Bigl( \frac{-1}{1+\delta(z_0) t} + \frac{1}{1+\delta(z_k)t}\Bigr) 
\tilde\nu(dt) \\
&= (\delta(z_0) - \delta(z_k)) 
\int\frac{t}{(1+\delta(z_0) t)(1+\delta(z_k)t)} \tilde\nu(dt) . 
\end{align*} 
Using Identity \eqref{diff}, we obtain 
\[
(1 - z_0 z_k \gamma(z_0, z_k) \tilde\gamma(z_0, z_k) ) 
(\tdl(z_0) - \tdl(z_k)) \\ = 
(z_k^{-1} - z_0^{-1}) 
\frac{z_0 \tilde m(z_0) - z_k \tilde m(z_k)}{\delta(z_0) - \delta(z_k)} .
\]
By what precedes, $\sup_k | (z_k^{-1} - z_0^{-1}) 
(z_0 \tilde m(z_0) - z_k \tilde m(z_k)) | < \infty$. 
Moreover, $\liminf_{k} | \delta(z_0) - \delta(z_k) | > 0$ since 
$\Im\delta(z_0) > 0$. Cauchy-Schwarz inequality shows that 
$| \gamma(z_0, z_k) | \leq \gamma(z_0, z_0^*)^{1/2} \gamma(z_k, z_k^*)^{1/2}$
and $| \tilde\gamma(z_0, z_k) | \leq 
\tilde\gamma(z_0, z_0^*)^{1/2} \tilde\gamma(z_k, z_k^*)^{1/2}$. Therefore, 
\begin{align*}
\inf_k | 1 - z_0 z_k \gamma(z_0, z_k) \tilde\gamma(z_0, z_k) | &\geq 
1 - \sup_k | z_0 z_k \gamma(z_0, z_k) \tilde\gamma(z_0, z_k) | \\
&\geq 1 - (|z_0|^2 \gamma(z_0, z_0^*) \tilde\gamma(z_0, z_0^*))^{1/2} \times \\
& 
\ \ \ \ \ \ \ \ \ \ \ \ \ \ \ \ 
\ \ \ \ \ \ \ \ \ \ \ \
\sup_k (|z_k|^2 \gamma(z_k, z_k^*) \tilde\gamma(z_k, z_k^*))^{1/2} \\
&> 0 
\end{align*}  
which shows that $\sup_k | \tdl(z_k) | < \infty$. 
\end{proof}

\begin{lemma}
\label{m-borne} 
For $\ell = 1, 2$, the integrals 
\[
\int \frac{t^\ell}{| 1+ \tdl(z) t|^2} \nu(dt) 
\quad\text{and}\quad 
\int \frac{t^\ell}{|1+\delta(z)t |^2} \tilde\nu(dt) 
\]
are bounded on any bounded region $\mathcal R$ of $\CC_+$ lying at a positive 
distance from the imaginary axis.
\end{lemma}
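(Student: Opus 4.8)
The plan is to settle the case $\ell = 2$ first and then deduce $\ell = 1$ from it. Fix the bounded region $\mathcal R \subset \CC_+$; by hypothesis $|\Re z| \ge \varepsilon$ on $\mathcal R$ for some $\varepsilon > 0$, so in particular $\varepsilon \le |z| \le R$ on $\mathcal R$ for some finite $R$, and by Lemma~\ref{d2-bounded} there is a finite $M$ with $|\delta(z)| \le M$ and $|\tdl(z)| \le M$ on $\mathcal R$. The two $\ell = 2$ integrals are exactly $c^{-1}|z|^2 \gamma(z,z^*)$ and $|z|^2 \tilde\gamma(z,z^*)$ by the definitions in \eqref{gamma}; since $|z|^2$ is bounded on $\mathcal R$, it is enough to bound $\gamma(z,z^*)$ and $\tilde\gamma(z,z^*)$ from above there.

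The upper bound will come from combining the central inequality $|z|^2 \gamma(z,z^*) \tilde\gamma(z,z^*) < 1$ (established just after \eqref{gamma(z,z*)}) with a matching \emph{lower} bound on each of $\gamma(z,z^*)$ and $\tilde\gamma(z,z^*)$. Such a lower bound is immediate from the boundedness of $\delta$ and $\tdl$: since $|1+\delta(z)t|^2 \le (1+Mt)^2 \le 2(1+M^2 t^2)$ and $|z| \le R$, one has
\[
\tilde\gamma(z,z^*) \ \ge\ \frac{1}{2R^2} \int \frac{t^2}{1+M^2 t^2}\, \tilde\nu(dt) \ =:\ \underline b \ >\ 0,
\]
the strict positivity holding because $\tilde\nu$ is supported on $\RR_+$ and $\tilde\nu \neq \bs d_0$; symmetrically $\gamma(z,z^*) \ge \underline a > 0$, using $\nu \neq \bs d_0$. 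Feeding these into $|z|^2 \gamma(z,z^*) \tilde\gamma(z,z^*) < 1$ and using $|z| \ge \varepsilon$ gives $\gamma(z,z^*) \le (\varepsilon^2 \underline b)^{-1}$ and $\tilde\gamma(z,z^*) \le (\varepsilon^2 \underline a)^{-1}$ on $\mathcal R$, which settles $\ell = 2$.

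For $\ell = 1$ I would first bound $\int |1+\tdl(z)t|^{-2}\,\nu(dt)$ on $\mathcal R$: from the triangle inequality $1 \le |1+\tdl(z)t| + |\tdl(z)|\,t \le |1+\tdl(z)t| + Mt$ one gets $|1+\tdl(z)t|^{-2} \le 2 + 2M^2 t^2 |1+\tdl(z)t|^{-2}$, so integrating against $\nu$ and invoking the $\ell = 2$ bound shows the claim; the bound for $\int |1+\delta(z)t|^{-2}\,\tilde\nu(dt)$ is obtained the same way. Cauchy--Schwarz then gives
\[
\int \frac{t}{|1+\tdl(z)t|^2}\,\nu(dt) \ \le\ \Bigl( \int \frac{1}{|1+\tdl(z)t|^2}\,\nu(dt)\Bigr)^{1/2} \Bigl( \int \frac{t^2}{|1+\tdl(z)t|^2}\,\nu(dt)\Bigr)^{1/2},
\]
and both factors on the right are now known to be bounded on $\mathcal R$; the second integral of the lemma is handled identically. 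There is no serious obstacle in this argument: the only point deserving attention is that the lower bounds $\underline a,\underline b$ are strictly positive, which is precisely where the assumptions $\nu \neq \bs d_0$, $\tilde\nu \neq \bs d_0$ and the separation of $\mathcal R$ from the imaginary axis enter; everything else is elementary manipulation of the identities gathered before Proposition~\ref{class-S} together with Lemma~\ref{d2-bounded}.
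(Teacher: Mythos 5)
Your proof is correct, and it diverges from the paper's in execution. For $\ell=2$ the paper uses the same two ingredients you do (the identification of the integrals with $c^{-1}|z|^2\gamma(z,z^*)$ and $|z|^2\tilde\gamma(z,z^*)$, the inequality $|z|^2\gamma(z,z^*)\tilde\gamma(z,z^*)<1$, and Lemma~\ref{d2-bounded}), but argues by contradiction: if $\tilde\gamma(z_n,z_n^*)\to\infty$ then $\gamma(z_n,z_n^*)\to 0$, forcing the integrand to vanish $\nu$-a.e.\ and hence $|\tdl(z_n)|\to\infty$, contradicting Lemma~\ref{d2-bounded}. Your version replaces this qualitative step by the explicit lower bound $\tilde\gamma(z,z^*)\ge \underline b>0$ coming from $|1+\delta(z)t|^2\le 2(1+M^2t^2)$ and $\tilde\nu\neq\bs d_0$; this is cleaner and yields quantitative constants, at the mild cost of having to note that $\underline a,\underline b>0$. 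For $\ell=1$ the routes genuinely differ: the paper splits the integral over $[0,1]$ and $[1,\infty)$, absorbs the tail into the $\ell=2$ case, and on $[0,1]$ splits again according to whether $t\le|2\Re\tdl(z)|^{-1}$ (where $|1+\tdl(z)t|\ge 1/2$) or not; you instead first prove the $\ell=0$ bound via $1\le|1+\tdl(z)t|+Mt$ and then interpolate between $\ell=0$ and $\ell=2$ by Cauchy--Schwarz. Both are valid; your interpolation argument is arguably more systematic (it gives all $0\le\ell\le 2$ at once), while the paper's is self-contained on the single integral it needs. No gaps.
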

\begin{proof}
We observe that for $\ell=2$, the integrals given in the statement of the
lemma are equal to 
$c^{-1} |z|^2 \gamma(z,z^*)$ and to $|z|^2 \tilde\gamma(z,z^*)$
respectively. We know that 
$\sup_{z\in{\mathcal R}}
 |z|^4 \gamma(z,z^*) \tilde\gamma(z,z^*) \leq \sup_{z\in{\mathcal R}}|z|^2 
< \infty$. Assume that $\tilde\gamma(z_n, z_n^*) \to\infty$ along some 
sequence $z_n \in{\mathcal R}$. 
Then $\gamma(z_n,z_n^*) \to 0$, which implies that the integrand of 
$|z_n|^2 \gamma(z_n,z_n^*)$ 
converges to zero $\nu$-almost everywhere. This implies in turn that 
$|\tdl(z_n)| \to\infty$ which contradicts Lemma \ref{d2-bounded}. The result
is proven for $\ell=2$. \\
We now consider the case $\ell=1$, focusing on the first integral that we write
as $\int_0^\infty t I(t)^{-1} \nu(dt)$. Since 
$\int_0^\infty t I(t)^{-1} \nu(dt) \leq \int_0^1 t I(t)^{-1} \nu(dt) +  
\int_1^\infty t^2 I(t)^{-1} \nu(dt)$, we only need to bound the first 
term at the right hand side. 
Denoting by $\1$ the indicator function, we have 
\begin{align*} 
\int_0^1 \frac{t}{I(t)} \nu(dt) &= 
\int_0^1 \frac{t}{I(t)} \1_{[0, |2\Re\tdl|^{-1}]}(t) \, \nu(dt) 
+ \int_0^1 \frac{t}{I(t)} \1_{(|2\Re\tdl|^{-1},\infty)}(t) \, \nu(dt) \\
&\leq 4 \int_0^1 t \nu(dt) + | 2 \Re\tdl | \int_0^\infty \frac{t^2}{I(t)} 
\nu(dt)
\end{align*} 
which is bounded. 
\end{proof} 

\begin{lemma} 
\label{lm-cvg-delta} 
For any $x \in \RR_*$, $\lim_{z \in \CC_+ \to x} \delta(z)$ and 
$\lim_{z \in \CC_+ \to x} \tdl(z)$ exist. 
\end{lemma}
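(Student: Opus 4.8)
The plan is to show that the holomorphic functions $\delta(z)$ and $\tdl(z)$, which by Lemma \ref{d2-bounded} are locally bounded on $\CC_+$ near any real point $x \neq 0$, actually extend continuously up to the real axis. The standard device here is to work with the equation $F(\tdl,z)=0$ from \eqref{tdelta-seul} and exploit that $|\partial F/\partial\tdl|$ stays bounded away from zero. First I would fix $x \in \RR_*$ and a bounded neighborhood $\mathcal R \subset \CC_+$ of $x$ lying at positive distance from the imaginary axis. By Lemma \ref{d2-bounded} both $|\delta|$ and $|\tdl|$ are bounded on $\mathcal R$, and by Lemma \ref{m-borne} the integrals $\int t^\ell |1+\tdl(z)t|^{-2}\nu(dt)$ and $\int t^\ell|1+\delta(z)t|^{-2}\tilde\nu(dt)$ for $\ell=1,2$ are bounded there. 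These bounds translate into a uniform bound on $|\gamma(z,z^*)|$ and $|\tilde\gamma(z,z^*)|$ on $\mathcal R$, hence (via Cauchy--Schwarz, as in the proof of Lemma \ref{d2-bounded}) on $|\gamma(z_1,z_2)|$, $|\tilde\gamma(z_1,z_2)|$ for $z_1,z_2 \in \mathcal R$.

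The key inequality to establish is a \emph{uniform} version of the central bound $1-|z|^2\gamma(z,z^*)\tilde\gamma(z,z^*)>0$: I would show $\inf_{z\in\mathcal R}\bigl(1-|z|^2\gamma(z,z^*)\tilde\gamma(z,z^*)\bigr)>0$, at least after shrinking $\mathcal R$. This is where the argument has teeth. Suppose not; then there is a sequence $z_k \in \mathcal R$ with $z_k \to x$ (passing to a subsequence) along which $|z_k|^2\gamma(z_k,z_k^*)\tilde\gamma(z_k,z_k^*)\to 1$. Using the boundedness furnished by Lemma \ref{m-borne} one extracts further subsequential limits of $\delta(z_k)$ and $\tdl(z_k)$, call them $\delta_\infty, \tdl_\infty \in \overline{\CC_+}$, and passes to the limit in the defining relations. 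If the limiting imaginary parts were zero one would be on the real axis at $x\neq 0$; I expect a contradiction with $x\neq 0$ and $\nu,\tilde\nu \neq \bs d_0$ because the relation forcing $|z|^2\gamma\tilde\gamma=1$ combined with the algebraic identities for $\Im\tdl$, $\Im(z\tdl)$ cannot hold at a genuine real point unless $x$ sits in a degenerate configuration. This rules-out-the-boundary step — showing that the "spectral radius" $|z|^2\gamma\tilde\gamma$ stays strictly below $1$ up to and including the real axis away from $0$ — is the main obstacle, and it is the analogue of the corresponding estimate in \cite{sil-choi95}; one likely needs to invoke Proposition \ref{mu(0)} or the finiteness of the relevant integrals to exclude the bad case.

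Granting the uniform bound $\inf_{z\in\mathcal R}|\partial F/\partial\tdl(\tdl(z),z)| \ge \kappa > 0$, I would then argue equicontinuity. Writing $F(\tdl(z_1),z_1)-F(\tdl(z_2),z_2)=0$ and expanding, one gets
\[
(\tdl(z_1)-\tdl(z_2))\bigl(1 - z_1 z_2 \gamma(z_1,z_2)\tilde\gamma(z_1,z_2)\bigr) = (z_1-z_2)\int \frac{t}{z_1 z_2 (1+\delta(z_1)t)(1+\delta(z_2)t)}\tilde\nu(dt),
\]
which is exactly identity \eqref{diff}. The right-hand side is $O(|z_1-z_2|)$ uniformly on $\mathcal R$ by Lemma \ref{m-borne}, and the prefactor on the left is bounded below, so $|\tdl(z_1)-\tdl(z_2)| \le C|z_1-z_2|$ on $\mathcal R$: $\tdl$ is uniformly Lipschitz, hence uniformly continuous, on $\mathcal R\cap\CC_+$, and therefore extends continuously to $\overline{\mathcal R}$, in particular the nontangential limit $\lim_{z\in\CC_+\to x}\tdl(z)$ exists. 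The same argument applied with the roles of $\delta$ and $\tdl$ interchanged (using the symmetric identity for $\delta(z_1)-\delta(z_2)$ obtained by plugging \eqref{deltat} into \eqref{delta}) gives the existence of $\lim_{z\in\CC_+\to x}\delta(z)$. Since $x \in \RR_*$ was arbitrary, both limits exist everywhere on $\RR_*$, which is the assertion of the lemma. The only genuinely delicate point, to repeat, is securing the strict-inequality-up-to-the-boundary estimate; everything after that is a Lipschitz continuation argument.
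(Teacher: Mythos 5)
There is a genuine gap, and it sits exactly where you flagged the ``main obstacle'': the uniform bound $\inf_{z\in\mathcal R}\bigl(1-|z|^2\gamma(z,z^*)\tilde\gamma(z,z^*)\bigr)>0$ that your whole Lipschitz argument rests on is \emph{false} whenever $x$ lies in the support of $\mu$ at a point where the density is positive --- and the lemma must cover those points too. Indeed, Equation \eqref{gamma(z,z*)} reads
\[
\bigl(1-|z|^2\gamma(z,z^*)\tilde\gamma(z,z^*)\bigr)\,\Im\tdl(z)=\Im z\int\frac{t}{|z|^2|1+\delta(z)t|^2}\,\tilde\nu(dt),
\]
and by Lemma \ref{m-borne} the integral on the right stays bounded as $z\to x$. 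If $\Im\tdl(z)$ stays bounded away from zero (which is what happens on the bulk of the support), the right-hand side tends to $0$ while $\Im\tdl(z)$ does not, forcing $1-|z|^2\gamma(z,z^*)\tilde\gamma(z,z^*)\to 0$. The paper uses precisely this degeneration in the proof of Theorem \ref{holom} to conclude $x_0^2\Gamma(x_0,x_0)\widetilde\Gamma(x_0,x_0)=1$ at any $x_0$ with $f(x_0)>0$; such points are the generic case, not a ``degenerate configuration.'' Your Lipschitz conclusion is also visibly wrong at edges of the support, where Theorem \ref{sq-root} shows $\tdl(x)=\Phi(\sqrt{x-a})$, i.e.\ H\"older-$1/2$ and not Lipschitz. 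Your scheme would only work for $x$ outside $\support(\mu)$, where Proposition \ref{direct} does give $1-x^2\gamma(x,x)\tilde\gamma(x,x)>0$.

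The paper's proof circumvents this by never asking for a lower bound on the ``diagonal'' quantity $1-|z|^2\gamma(z,z^*)\tilde\gamma(z,z^*)$. It takes two sequences $z_n,\underline z_n\to x$ whose images $(\delta,\tdl)$ converge to possibly different limits $(\bs\delta,\bs\tdl)$ and $(\underline{\bs\delta},\underline{\bs\tdl})$, applies \eqref{diff} to the \emph{pair} $(z_n,\underline z_n)$ (where the right-hand side tends to $0$ because $z_n-\underline z_n\to 0$), and then bounds $\liminf_n|1-z_n\underline z_n\gamma(z_n,\underline z_n)\tilde\gamma(z_n,\underline z_n)|$ from below by $\frac14\int\bs F(t,u)\,\nu(dt)\tilde\nu(du)$, a quantity of the form $\frac{c}{x^2}\tr\Delta H\Delta^*G$ with $\Delta$ built from the differences $\underline{\bs\tdl}-\bs\tdl^*$, $\underline{\bs\delta}-\bs\delta^*$. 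Because $\nu$ and $\tilde\nu$ are not Dirac measures, $\int H\,d\nu$ and $\int G\,d\tilde\nu$ are positive definite, so this lower bound is strictly positive exactly when the two subsequential limits differ --- which is the contradiction. In other words, the mixed quantity $1-z_1z_2\gamma(z_1,z_2)\tilde\gamma(z_1,z_2)$ is allowed to vanish in the limit when the limits agree (harmless) but is shown to stay away from zero when they disagree; this distinction is the missing idea in your write-up, and it cannot be replaced by a uniform estimate plus equicontinuity.
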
 
\begin{proof} 
If $\tilde\nu$ is a Dirac probability measure that we take without generality
loss as $\bs d_1$, then $\tdl(z) = \tilde m(z)$ converges as 
$z \in \CC_+ \to x$ to a non zero value \cite{sil-choi95}. Therefore, 
$\delta(z) = (- z\tdl(z))^{-1} - 1$ (see Eq.~\eqref{deltat}) also 
converges. We can therefore assume that neither $\nu$ nor $\tilde\nu$ is a 
Dirac measure. \\
We showed that $\delta$ and $\tilde \delta$ are bounded on any bounded region 
of $\CC_+$ lying away from the imaginary axis. 
Take two sequences $z_n$ and $\underline z_n$ in $\CC_+$ that converge to
the same $x \in \RR_*$, and such that $\tdl_n = \tdl(z_n)$ and 
$\underline\tdl_n = \tdl(\underline z_n)$
converge towards $\bs\tdl$ and $\underline{\bs\tdl}$ respectively, 
and $\delta_n = \delta(z_n)$ and $\underline\delta_n = \delta(\underline z_n)$
converge towards $\bs\delta$ and $\underline{\bs\delta}$ respectively. 
We shall show that $\bs\tdl = \underline{\bs\tdl}$ and 
$\bs\delta = \underline{\bs\delta}$. We start by writing 
\begin{multline*} 
(1 - z_n \underline z_n \gamma(z_n, \underline z_n) 
\tilde\gamma(z_n, \underline z_n) ) 
(\tdl_n - \underline\tdl_n) \\ = 
(z_n - \underline z_n) 
\int \frac{t}
{z_n \underline z_n( 1 + \delta_n t)
(1 + \underline\delta_n t)} \, \tilde\nu(dt) ,  
\end{multline*} 
and we have a similar equation controlling $\delta_n - \underline\delta_n$. 
The sequence of integrals at the right hand side is bounded by Cauchy-Schwarz
and by Lemma \ref{m-borne}. Therefore, the right hand side converges to 
zero as $z_n, \underline z_n \to x$.  We shall show that if 
$\bs\delta - \underline{\bs\delta} \neq 0$ or 
$\bs\tdl - \underline{\bs\tdl} \neq 0$, then 
$\liminf_n | 1 - z_n \underline z_n \gamma(z_n, \underline z_n) 
\tilde\gamma(z_n, \underline z_n) | > 0$, which raises a contradiction. \\
The real part of $z_n \underline z_n \gamma(z_n, \underline z_n) 
\tilde\gamma(z_n, \underline z_n)$ satisfies 
\begin{align*} 
&\Re ( z_n \underline z_n \gamma(z_n, \underline z_n) 
\tilde\gamma(z_n, \underline z_n) ) \\
&= 
\frac 14 \Bigl[ 
\int c \Bigl| \frac{ut}{z_n^*(1+\tdl_n^* t)(1+\delta_n^* u)} + 
\frac{ut}{\underline z_n (1+\underline\tdl_n t)(1+\underline\delta_n u)} 
\Bigr|^2 \nu(dt) \tilde\nu(du) \\ 
&\phantom{\frac 14 \Bigl[} 
- 
\int c \Bigl| \frac{ut}{z_n^*(1+\tdl_n^* t)(1+\delta_n^* u)} - 
\frac{ut}{\underline z_n (1+\underline\tdl_n t)(1+\underline\delta_n u)} 
\Bigr|^2 \nu(dt) \tilde\nu(du) \Bigr].
\end{align*} 
Writing concisely the right hand side as $(1/4) [ \chi_{1,n} - \chi_{2,n} ]$, 
we have $\chi_{1,n} / 4 < 1$ thanks to the inequalities 
$|z_n|^2 \gamma(z_n, z_n^*) \tilde\gamma(z_n, z_n^*) < 1$, 
$|\underline z_n|^2 \gamma(\underline z_n, \underline z_n^*) 
              \tilde\gamma(\underline z_n, \underline z_n^*) < 1$, and 
$| a+b|^2 \leq 2(|a|^2 + |b|^2)$. 
The term $\chi_{2,n}$ readily satisfies 
\begin{align*} 
\chi_{2,n} 
&\geq 
\int c t^2u^2 
\frac{\bigl|
\underline z_n (1+\underline\tdl_n t)(1+\underline\delta_n u) 
- z_n^*(1+\tdl_n^* t)(1+\delta_n^* u)  \bigr|^2} 
{| z_n \underline z_n|^2 (1+K t)^4 (1+ K u)^4} 
\nu(dt) \tilde\nu(du)   
\end{align*} 
where $K$ is a finite upper bound on the moduli of $\delta(z)$ and 
$\tdl(z)$ when $z\in\CC_+\to x$. 
Denoting the integrand at the right hand side as $F_n(t,u)$, we therefore get
that 
\begin{align*} 
| 1 - z_n \underline z_n \gamma(z_n, \underline z_n) 
\tilde\gamma(z_n, \underline z_n) | 
&\geq 
1 - \Re (z_n \underline z_n \gamma(z_n, \underline z_n) 
\tilde\gamma(z_n, \underline z_n) ) \\
& > \frac 14 \int F_n(t,u) \, \nu(dt) \tilde\nu(du) , 
\end{align*} 
hence 
\[
\liminf_n | 1 - z_n \underline z_n \gamma(z_n, \underline z_n) 
\tilde\gamma(z_n, \underline z_n) | \geq 
\frac 14 
\int \bs F(t,u) \, \nu(dt) \tilde\nu(du)  
\]
by Fatou's lemma, where 
\begin{align*}
\bs F(t,u) &= 
\frac{c u^2 t^2}{x^2 (1+Ku)^4 (1+Kt)^4} 
\Bigl|
\begin{bmatrix} 1 & u \end{bmatrix} \Bigl( 
\begin{bmatrix} 1 \\ \underline{\bs\delta}  \end{bmatrix} 
\begin{bmatrix} 1 & \underline{\bs\tdl}  \end{bmatrix} 
- 
\begin{bmatrix} 1 \\ \bs\delta^* \end{bmatrix} 
\begin{bmatrix} 1 & \bs\tdl^* \end{bmatrix} \Bigr) 
\begin{bmatrix} 1 \\ t \end{bmatrix}  
\Bigr|^2 \\
&= 
\frac{c}{x^2} 
\tr \Delta H(t) \Delta^* G(u) 
\end{align*} 
with 
\begin{gather*}
\Delta = \begin{bmatrix} 0 & \underline{\bs\tdl} - \bs\tdl^* \\
\underline{\bs\delta} - \bs\delta^* & 
\underline{\bs\delta} \underline{\bs\tdl} - \bs\delta^* \bs\tdl^* 
\end{bmatrix},  \\
H(t) = \frac{t^2}{(1+Kt)^4} \begin{bmatrix} 1 & t \\ t & t^2 \end{bmatrix},  
\quad \text{and} \quad  
G(u) = \frac{u^2}{(1+Ku)^4} \begin{bmatrix} 1 & u \\ u & u^2 \end{bmatrix} .
\end{gather*} 
Since $\nu$ is not a Dirac measure, 
\[
\Bigl( \int\frac{t^3}{(1+Kt)^4} \nu(dt) \Bigr)^2 
< 
\int\frac{t^2}{(1+Kt)^4} \nu(dt)  \ \times \ 
\int\frac{t^4}{(1+Kt)^4} \nu(dt) 
\]
therefore, the symmetric matrix $\int H(t) \, \nu(dt)$ is definite positive. 
For the same reason, the symmetric matrix $\int G(u) \, \tilde\nu(du)$ is also
definite positive. Observe now that 
$\bs\tdl \neq \underline{\bs\tdl} \Rightarrow 
\bs\tdl^* \neq \underline{\bs\tdl}$ and 
$\bs\delta \neq \underline{\bs\delta} \Rightarrow 
\bs\delta^* \neq \underline{\bs\delta}$ since the imaginary parts of 
$\bs\delta$, $\bs\tdl$, $\underline{\bs\delta}$ and $\underline{\bs\tdl}$
are non negative. Therefore, if $\bs\tdl \neq \underline{\bs\tdl}$ or 
$\bs\delta \neq \underline{\bs\delta}$, then the matrix $\Delta$ is 
non zero. It results that  
$\int \bs F(t,u) \, \nu(dt) \tilde\nu(du) > 0$ as desired.  
\end{proof} 

\begin{lemma} 
\label{lm-equiv-cvg}
For any $x \in \RR_*$, $\lim_{z \in \CC_+ \to x} m(z)$ exists. 
Let $m(x) = \lim_{z \in \CC_+ \to x} m(z)$,  
$\delta(x) = \lim_{z \in \CC_+ \to x} \delta(z)$ and 
$\tdl(x) = \lim_{z \in \CC_+ \to x} \tdl(z)$. Then 
\[
\Im \delta(x) > 0 \Leftrightarrow \Im \tdl(x) > 0 
\Leftrightarrow \Im m(x) > 0 . 
\]
\end{lemma}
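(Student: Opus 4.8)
\emph{Proof plan.} The existence of the limit $m(x)$ costs nothing: by \eqref{m-delta} we have $m(z) = -z^{-1} - c^{-1}\delta(z)\tdl(z)$ on $\CC_+$, and since $x \neq 0$ while $\delta(z)$ and $\tdl(z)$ converge as $z \in \CC_+ \to x$ by Lemma \ref{lm-cvg-delta}, the limit $m(x) = -x^{-1} - c^{-1}\delta(x)\tdl(x)$ exists. Being limits of elements of $\CC_+$, respectively of the imaginary part of a Stieltjes transform, the three numbers $\Im\delta(x)$, $\Im\tdl(x)$, $\Im m(x)$ are all nonnegative. It then remains to prove the two equivalences $\Im\delta(x) > 0 \Leftrightarrow \Im\tdl(x) > 0$ and $\Im\tdl(x) > 0 \Leftrightarrow \Im m(x) > 0$, and the strategy is to transport positivity of imaginary parts through the ``imaginary part'' identities already used in the proof of Proposition \ref{class-S}, the only danger being a $0\cdot\infty$ indeterminacy which the boundedness estimates of Lemma \ref{m-borne} are designed to control.

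For the first equivalence, recall the identity $\Im(z\tdl(z)) = |z|^2 \Im\delta(z)\,\tilde\gamma(z, z^*)$ for $z \in \CC_+$, established in the proof of Proposition \ref{class-S} by taking imaginary parts in \eqref{deltat}. Let $z \in \CC_+ \to x$: the left-hand side tends to $x\Im\tdl(x)$, and by Lemma \ref{m-borne} the factor $\tilde\gamma(z,z^*) = |z|^{-2}\int t^2 |1+\delta(z)t|^{-2}\tilde\nu(dt)$ stays bounded (here I use $|z| \to |x| > 0$). Consequently, if $\Im\delta(x) = 0$ the right-hand side tends to $0$, forcing $\Im\tdl(x) = 0$. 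If on the contrary $\Im\delta(x) > 0$, then $\Im\delta(z) \geq \tfrac12 \Im\delta(x)$ for $z$ close to $x$, so $t^2|1+\delta(z)t|^{-2} \leq 4(\Im\delta(x))^{-2}$, and the dominated convergence theorem gives $\tilde\gamma(z,z^*) \to \tilde\gamma(x,x^*) := x^{-2}\int t^2|1+\delta(x)t|^{-2}\tilde\nu(dt) \in (0, \infty)$, the strict positivity following from $\tilde\nu \neq \bs d_0$ (which makes the integrand positive on a set of positive $\tilde\nu$-measure). Hence $x\Im\tdl(x) = x^2 \Im\delta(x)\,\tilde\gamma(x,x^*) > 0$, which forces $\Im\tdl(x) > 0$ (and, incidentally, $x > 0$). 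This proves $\Im\delta(x) > 0 \Leftrightarrow \Im\tdl(x) > 0$.

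For the second equivalence, first suppose $\Im\tdl(x) = 0$; then $\Im\delta(x) = 0$ by what precedes, so $\delta(x)\tdl(x) \in \RR$ and $m(x) = -x^{-1} - c^{-1}\delta(x)\tdl(x) \in \RR$, i.e. $\Im m(x) = 0$. Suppose now $\Im\tdl(x) > 0$, so that $x > 0$ by the previous paragraph. Taking imaginary parts in \eqref{ts} gives
\[
\Im m(z) = \int \frac{\Im z + t\, \Im(z\tdl(z))}{|z|^2\,|1+\tdl(z)t|^2}\,\nu(dt), \qquad z \in \CC_+ ,
\]
and I would bound this integrand, uniformly for $z$ in a small neighbourhood of $x$, by a bounded (hence $\nu$-integrable) function: Lemma \ref{d2-bounded} bounds $|\tdl(z)|$ and $|\Im(z\tdl(z))|$, so the numerator is $\le C(1+t)$, while $\Im\tdl(z) \geq \tfrac12\Im\tdl(x) > 0$ gives $|1+\tdl(z)t|^2 \geq c_0 \min(1, t^2)$ for a suitable $c_0 > 0$ (the bound for $t \le 1$ coming from $(1+\Re(\tdl(z))t)^2 \to 1$, the one for $t > 1$ from the imaginary part), so that the integrand is $\le K_1$ for $t\le 1$ and $\le C(1+t)/(c_0(x/2)^2 t^2)\le K_2$ for $t>1$. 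Since $\Im z \to 0$ and $\Im(z\tdl(z)) \to x\Im\tdl(x)$, the dominated convergence theorem yields $\Im m(x) = x^{-1}\Im\tdl(x)\int t\,|1+\tdl(x)t|^{-2}\nu(dt)$, which is strictly positive (and the integral finite) because $\nu \neq \bs d_0$. Combining the two cases, $\Im\tdl(x) > 0 \Leftrightarrow \Im m(x) > 0$, and together with the first equivalence this completes the proof. The only genuinely delicate point is the indeterminacy threatening $\Im\delta(z)\tilde\gamma(z,z^*)$ and the integral for $\Im m(z)$ when the limit $\tdl(x)$ (or $\delta(x)$) is real; Lemma \ref{m-borne} and the explicit dominating function above are exactly what resolve it.
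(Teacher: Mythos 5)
Your proof is correct and follows essentially the same route as the paper: existence of $m(x)$ from \eqref{m-delta}, the first equivalence by transporting positivity through the imaginary-part identity linking $\tdl$ and $\delta$, the forward half of the second equivalence from \eqref{ts}, and the converse from the algebraic identity $\Im m(x) = -c^{-1}\Im(\delta(x)\tdl(x))$. The only differences are cosmetic: the paper uses Fatou's lemma where you set up dominated convergence, and it obtains the reverse implication of the first equivalence by symmetry rather than via the boundedness of $\tilde\gamma(z,z^*)$ from Lemma \ref{m-borne}.
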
 
\begin{proof} 
The fact that $\lim_{z \in \CC_+ \to x} m(z)$ exists can be immediately deduced
from the first identity in \eqref{m-delta} and the previous lemma. 
Let us show that $\Im \delta(x) > 0 \Leftrightarrow \Im \tdl(x) > 0$. We have 
\[
\Im\tdl(z) = 
\frac{1}{|z|^2} \int \frac{\Im zt + \Im(z \delta(z)) t^2}
{|1 + \delta(z) t|^2} \tilde\nu(dt)
\]
Assume that $\lim_{z \in \CC_+ \to x} \Im \delta(z) = \Im \delta(x) > 0$. 
By Fatou's lemma, we get 
\[
\liminf_{z \in \CC_+ \to x} \Im\tdl(z) \geq 
\frac{1}{x^2} \int 
\frac{x \Im\delta(x) t^2}{(1 + \Re\delta(x) t)^2 + t^2 (\Im\delta(x))^2} 
\tilde\nu(dt) > 0 . 
\]
Using this same argument with the roles of $\delta$ and $\tdl$ 
interchanged, we get that $\Im \delta(x) > 0 \Leftrightarrow 
\Im \tdl(x) > 0$. \\
Using \eqref{ts} and Fatou's lemma again, we also obtain that 
$\Im \tdl(x) > 0 \Rightarrow \Im m(x) > 0$. Conversely, 
$\Im m(x) = -c^{-1} \Im(\delta(x)\tdl(x)) = - c^{-1} ( \Re\delta(x) \Im\tdl(x) 
+ \Im\delta(x) \Re\tdl(x) )$. Therefore, 
$\Im m(x) > 0 \Rightarrow (\Im\delta(x) > 0 \ \text{or} \ \Im\tdl(x) > 0)  
\Leftrightarrow \Im\tdl(x) > 0$. 
\end{proof} 

\subsection{Determination of $\support(\mu)$} 
\label{determine-support}

In the remainder, we characterize $\support(\mu) \cap \RR_* = 
\support(\tilde\rho) \cap \RR_*$, focusing on the measure $\tilde\rho$. 
In the following, we let 
\[
\cal D = \left\{\begin{array}{l} 
\{0 \} \cup 
\{ \bs\delta \in \RR_* \, : \, - \bs\delta^{-1} \not\in \support(\tilde\nu) \} 
\ \text{if} \ \support(\tilde\nu) \ \text{is compact}, \\ 
\{ \bs\delta \in \RR_* \, : \, - \bs\delta^{-1} \not\in \support(\tilde\nu) \} 
\ \text{otherwise},  
\end{array}\right. 
\]
and 
\[
\widetilde{\cal D} = \left\{\begin{array}{l} 
\{0 \} \cup 
\{ \bs\tdl \in \RR_* \, : \, - \bs\tdl^{-1} \not\in \support(\nu) \} 
\ \text{if} \ \support(\nu) \ \text{is compact}, \\ 
\{ \bs\tdl \in \RR_* \, : \, - \bs\tdl^{-1} \not\in \support(\nu) \} 
\ \text{otherwise}.  
\end{array}\right. 
\]
Notice that $\cal D$ and $\widetilde{\cal D}$ are both open.
 
\begin{proposition}
\label{direct} 
If $\bs x \in \RR_*$ does not belong to $\support(\mu)$, then 
$\delta(\bs x) \in \cal D$, $\tilde\delta(\bs x) \in \widetilde{\cal D}$, and  
$1 - \bs x^2 \gamma(\bs x,\bs x) \tilde\gamma(\bs x,\bs x) > 0$.
\end{proposition}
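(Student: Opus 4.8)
The plan is to convert the two ``support'' assertions into statements about analytic continuation of the Stieltjes transforms $g_\nu(w)=\int(t-w)^{-1}\nu(dt)$ and $g_{\tilde\nu}$, and then to deduce the inequality almost for free.

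First I would record what $\bs x\notin\support(\mu)$ buys. Since $\support(\mu)$ is closed, $\bs x$ lies in an open interval disjoint from it, and by Theorem \ref{th-m(x)} this interval avoids $\support(\rho)$ and $\support(\tilde\rho)$ as well. Hence $\delta$, $\tdl$, $m$ and $\tilde m=-(1-c)z^{-1}+cm$ all continue holomorphically to a complex neighborhood $U$ of $\bs x$, are real on $U\cap\RR$, and the boundary values $\delta(\bs x),\tdl(\bs x),m(\bs x)$ are real (Lemma \ref{lm-equiv-cvg}). Because $\rho$ and $\tilde\rho$ are nonzero measures charging no neighborhood of $\bs x$ (Proposition \ref{class-S}), one has $\delta'(\bs x)=\int(t-\bs x)^{-2}\rho(dt)>0$ and likewise $\tdl'(\bs x)>0$.

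Now the core step. Multiplying \eqref{delta} by $-z/c$, using $t(1+\tdl t)^{-1}=\tdl^{-1}\bigl(1-(1+\tdl t)^{-1}\bigr)$ and then eliminating $\delta(z)\tdl(z)$ via \eqref{m-delta}, one obtains for every $z\in\CC_+$ the identities
\[
\int\frac{\nu(dt)}{1+\tdl(z)t}=-zm(z),\qquad \int\frac{\tilde\nu(dt)}{1+\delta(z)t}=-z\tilde m(z),
\]
that is, $g_\nu(-1/\tdl(z))=-z\,\tdl(z)\,m(z)$ and $g_{\tilde\nu}(-1/\delta(z))=-z\,\delta(z)\,\tilde m(z)$. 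I would then regard $z\mapsto-1/\tdl(z)$ as a holomorphic map from a neighborhood of $\bs x$ into the Riemann sphere: because $\tdl'(\bs x)>0$ its derivative does not vanish at $\bs x$ (a simple zero of $\tdl$ turning into a simple pole), so it is a biholomorphism of a neighborhood of $\bs x$ onto a neighborhood of $\zeta_*:=-1/\tdl(\bs x)$ which carries $\CC_+$ into $\CC_+$. Since $-z\,\tdl(z)\,m(z)$ is holomorphic on $U$ and the first identity holds on $U\cap\CC_+$ (where $-1/\tdl(z)\in\CC_+$, so $g_\nu$ is evaluated off $\support(\nu)$), the map $z\mapsto g_\nu(-1/\tdl(z))$ continues holomorphically to $U$; composing with the local inverse of $-1/\tdl$ exhibits $g_\nu$ as holomorphic at $\zeta_*$. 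If $\tdl(\bs x)\neq0$ this says $-1/\tdl(\bs x)=\zeta_*\notin\support(\nu)$, since a Stieltjes transform continues analytically across a real point exactly when that point is outside the support; if $\tdl(\bs x)=0$ then $\zeta_*=\infty$ and $g_\nu$ holomorphic at $\infty$ forces $\support(\nu)$ to be compact. In either case $\tdl(\bs x)\in\widetilde{\cal D}$, and the symmetric argument with $g_{\tilde\nu}(-1/\delta(z))=-z\,\delta(z)\,\tilde m(z)$ (using $\delta'(\bs x)>0$) gives $\delta(\bs x)\in{\cal D}$.

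It remains to prove $1-\bs x^2\gamma(\bs x,\bs x)\tilde\gamma(\bs x,\bs x)>0$, which is now routine. Knowing $-1/\delta(\bs x)\notin\support(\tilde\nu)$ (or $\delta(\bs x)=0$ with $\support(\tilde\nu)$ compact) and $-1/\tdl(\bs x)\notin\support(\nu)$, all the integrands occurring in $\gamma(\bs x,\bs x)$, $\tilde\gamma(\bs x,\bs x)$ and in the right-hand side of \eqref{gamma(z,z*)} are continuous and bounded uniformly for $z$ near $\bs x$, so dominated convergence gives $\gamma(z,z^*)\to\gamma(\bs x,\bs x)<\infty$ and $\tilde\gamma(z,z^*)\to\tilde\gamma(\bs x,\bs x)<\infty$ as $z\in\CC_+\to\bs x$. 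Dividing \eqref{gamma(z,z*)} by $\Im z$ and letting $z\to\bs x$, with $\Im\tdl(z)/\Im z\to\tdl'(\bs x)$, yields
\[
\bigl(1-\bs x^2\gamma(\bs x,\bs x)\tilde\gamma(\bs x,\bs x)\bigr)\tdl'(\bs x)=\int\frac{t}{\bs x^2\,(1+\delta(\bs x)t)^2}\,\tilde\nu(dt) .
\]
The right-hand side is strictly positive because $\tilde\nu\neq\bs d_0$ (and the denominator never vanishes on $\support(\tilde\nu)$), while $\tdl'(\bs x)>0$; hence $1-\bs x^2\gamma(\bs x,\bs x)\tilde\gamma(\bs x,\bs x)>0$. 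The one genuine idea, and hence the main obstacle, lies in the third paragraph: spotting the algebraic identity $\int(1+\tdl(z)t)^{-1}\nu(dt)=-zm(z)$ and realizing that analyticity of $m$ at $\bs x$ propagates, through the conformal change of variable $w=-1/\tdl(z)$, to analyticity of $g_\nu$ at $-1/\tdl(\bs x)$; the only delicate bookkeeping is the degenerate value $\tdl(\bs x)=0$, handled on the Riemann sphere — which is precisely why the definitions of $\cal D$ and $\widetilde{\cal D}$ single out the compactly supported cases.
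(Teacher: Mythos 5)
Your argument is correct and is essentially the paper's: the paper likewise extends $\delta,\tdl,m$ analytically across $\bs x$ with $\tdl'(\bs x)>0$, uses the rewriting $m_{\tilde\nu}(-\delta(z)^{-1})=\delta(z)+z\delta(z)^2\tdl(z)$ of \eqref{deltat} (algebraically the same as your $g_{\tilde\nu}(-1/\delta(z))=-z\,\delta(z)\,\tilde m(z)$) together with the realness of the boundary values to conclude that $\tilde\nu$ charges no neighborhood of $-1/\delta(\bs x)$, and it gets the inequality from $(1-\bs x^2\gamma(\bs x,\bs x)\tilde\gamma(\bs x,\bs x))\,\tdl'(\bs x)=\int t\,\bs x^{-2}(1+\delta(\bs x)t)^{-2}\tilde\nu(dt)>0$, which is exactly your rescaled \eqref{gamma(z,z*)} (the paper obtains it from \eqref{diff} instead). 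Your Riemann-sphere treatment of the degenerate case $\delta(\bs x)=0$ replaces the paper's monotonicity argument ($\delta$ increases to $0$ on a left neighborhood of $\bs x$, so $-1/\delta(x)\uparrow+\infty$ and $\tilde\nu$ puts no mass on some $[R,\infty)$); both work. One caveat: the principle you invoke --- that a Stieltjes transform continues analytically across a real point exactly when that point lies outside the support --- is false as stated (the semicircle or Mar\v{c}enko--Pastur transforms continue analytically across their supports); what is actually needed, and what your construction does provide since the continuation is a composition of functions real on the real axis, is that the continuation is real-valued near $\zeta_*$ on $\RR$, whereupon Stieltjes inversion gives $\nu$ zero mass there.
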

\begin{proof}
Since $\support(\mu) \cap \RR_* = \support(\rho) \cap \RR_* = 
\support(\tilde\rho) \cap \RR_*$ and 
since the Stieltjes Transform of a positive measure is real and increasing
on the real axis outside the support of this measure, $\delta(\bs x) \in \RR, 
\ 
\tilde\delta(\bs x) \in \RR$ and $\tilde\delta'(\bs x) > 0$. 
Extending Equation \eqref{diff} to a neighborhood of $\bs x$, we get 
\[
\tilde\delta'(\bs x) = 
\frac{1}{1 - \bs x^2 \gamma(\bs x, \bs x) \tilde\gamma(\bs x, \bs x)} 
\int \frac{t}
{\bs x^2( 1 + \delta(\bs x) t)^2} \, \tilde\nu(dt)  
\]
hence $1 - \bs x^2 \gamma(\bs x, \bs x) \tilde\gamma(\bs x, \bs x) > 0$. \\
We now show that $\delta(\bs x) \in \cal D$. Assume $\delta(\bs x) \neq 0$.
Denoting by $m_{\tilde\nu}$ the Stieltjes Transform of $\tilde\nu$, Equation 
\eqref{deltat} can be rewritten as $m_{\tilde\nu}(-\delta(z)^{-1}) = 
\delta(z) + z \delta^2(z) \tilde\delta(z)$. 
Making $z$ converge from $\CC_+$ to a point $x$ lying in a small neighborhood 
of $\bs x$ in $\RR$, the right hand side of this equation converges to a real 
number, and $-\delta(z)^{-1}$ converges from 
$\CC_+$ to a point in a neighborhood of $-\delta(\bs x)^{-1}$ in $\RR$. Since 
$m_{\tilde\nu}$ is real on this neighborhood, the load of this neighborhood by 
$\tilde\nu$ is zero, which implies that $\delta(\bs x) \in \cal D$. 
Assume now that $\delta(\bs x) = 0$. Then there exists
$\bs x_0 \not\in \support(\rho)$ such that $\bs x_0 < \bs x$ and $\delta(x)$
increases from $\delta(\bs x_0)$ to zero on $[\bs x_0, \bs x]$. The argument
above shows that $\tilde\nu([-\delta^{-1}(\bs x_0), -\delta^{-1}(x)]) = 0$ for 
any $x \in [ \bs x_0, \bs x)$. Making $x \uparrow \bs x$, we obtain that 
$\tilde\nu([-\delta^{-1}(\bs x_0), \infty)) = 0$, in other words, $\tilde\nu$ 
is compactly supported. It results that $\delta(\bs x) \in {\cal D}$. 
The same argument shows that $\tdl(\bs x) \in \widetilde{\cal D}$. 
\end{proof}

\begin{proposition}
\label{recip} 
Given $\bs\tdl \in \widetilde{\cal D}$, assume there exists  
$\bs x \in \RR_*$ for which 
\begin{equation} 
\label{reciproque} 
\begin{split} 
&\displaystyle{\bs\delta = c \int \frac{t}
{-\bs x(1+\bs\tdl t)} \nu(dt) \ \in {\cal D}}, 
\\ 
&\displaystyle{\bs\tdl = \int \frac{t}{-\bs x(1+\bs\delta t)} \tilde\nu(dt)} , 
\end{split}
\end{equation} 
and 
\begin{equation}
\label{deriv>0} 
1 - 
\bs x^2 \bs\gamma(\bs x,\bs{\tilde\delta}) \bs{\tilde\gamma}(\bs x,\bs\delta) 
> 0 
\end{equation} 
where 
\begin{align*}
\bs\gamma(\bs x, \bs\tdl) &= 
c \int \frac{t^2}
{\bs x^2(1+\bs\tdl t)^2} \nu(dt), \quad \text{and} \\ 
\bs{\tilde\gamma}(\bs x, \bs\delta) &= 
\int \frac{t^2}
{\bs x^2(1+\bs\delta t)^2} \tilde\nu(dt)   . 
\end{align*}
Then $\bs x \not\in \support(\mu)$. 
\end{proposition}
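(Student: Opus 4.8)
The strategy is to prove that $\tdl$ admits a holomorphic continuation, taking real values, to a full complex disc centred at $\bs x$; by the representation in Proposition~\ref{class-S} such a continuation forces $\tilde\rho$ to have neither a density nor an atom in a neighbourhood of $\bs x$, so that $\bs x \notin \support(\tilde\rho) = \support(\mu)\cap\RR_*$ (the last equality being Theorem~\ref{th-m(x)}), which is the claim. To produce this continuation I would apply the holomorphic implicit function theorem to the equation $F(\tdl,z)=0$ of \eqref{tdelta-seul}, exactly as in the proof of Proposition~\ref{class-S}, but now anchored at the \emph{real} point $(\bs{\tdl},\bs x)$ instead of at a point of $\CC_+^2$.

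First I would check that $F$ extends holomorphically to a neighbourhood of $(\bs{\tdl},\bs x)$ in $\CC^2$. The inner denominator $1+u\bs{\tdl}$ stays bounded away from $0$ on $\support(\nu)$ because $\bs{\tdl}\in\widetilde{\cal D}$; and, substituting the first line of \eqref{reciproque}, the outer denominator $-\bs x+c t\int\frac{u}{1+u\bs{\tdl}}\,\nu(du)$ equals $-\bs x(1+\bs\delta t)$, which stays bounded away from $0$ on $\support(\tilde\nu)$ because $\bs x\neq0$ and $\bs\delta\in{\cal D}$; when $\support(\nu)$ or $\support(\tilde\nu)$ is unbounded one uses in addition that the definitions of $\cal D$ and $\widetilde{\cal D}$ then make the corresponding moments finite, so that the integrals converge locally uniformly. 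Thus $F$ is holomorphic near $(\bs{\tdl},\bs x)$, and $F(\bs{\tdl},\bs x)=0$ by \eqref{reciproque}. The computation already carried out in the proof of Proposition~\ref{class-S} now gives
\[
\frac{\partial F}{\partial\tdl}(\bs{\tdl},\bs x)=\bs x^{2}\,\bs\gamma(\bs x,\bs{\tdl})\,\bs{\tilde\gamma}(\bs x,\bs\delta)-1,
\]
which is nonzero — in fact strictly negative — by hypothesis \eqref{deriv>0}. The holomorphic implicit function theorem therefore yields a holomorphic $g$ on a disc $\{\,|z-\bs x|<\varepsilon\,\}$ with $g(\bs x)=\bs{\tdl}$ and $F(g(z),z)\equiv0$. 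Since the integrands in $F$ are real, $F(\overline{\tdl},\overline z)=\overline{F(\tdl,z)}$, and the local uniqueness of the zero set of $F$ as a graph forces $g(\overline z)=\overline{g(z)}$; in particular $g$ is real on $(\bs x-\varepsilon,\bs x+\varepsilon)$. Finally, implicit differentiation together with the elementary identity following from \eqref{reciproque} gives
\[
g'(\bs x)=\frac{1}{1-\bs x^{2}\bs\gamma(\bs x,\bs{\tdl})\bs{\tilde\gamma}(\bs x,\bs\delta)}\int\frac{t}{\bs x^{2}(1+\bs\delta t)^{2}}\,\tilde\nu(dt)>0,
\]
the denominator being positive by \eqref{deriv>0} and the integral being strictly positive because $\tilde\nu\neq\bs d_0$.

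It then remains to identify $g$ with $\tdl$ near $\bs x$ and conclude. Since $g(\bs x)\in\RR$ and $g'(\bs x)$ is a positive real number, $\Im g(\bs x+\imath y)>0$ for all small $y>0$; hence $g(\bs x+\imath y)\in\CC_+$ is a solution of $F(\cdot,\bs x+\imath y)=0$, so by the uniqueness statement of Proposition~\ref{1st-order} it equals $\tdl(\bs x+\imath y)$. As $g$ and $\tdl$ are both holomorphic on the convex, hence connected, set $\{z\in\CC_+:|z-\bs x|<\varepsilon\}$ and agree along the segment $\{\bs x+\imath y:0<y<\varepsilon\}$, they coincide on that whole set. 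Thus $\tdl$ extends holomorphically across $(\bs x-\varepsilon,\bs x+\varepsilon)$ with real boundary values; shrinking $\varepsilon$ so that $0\notin(\bs x-\varepsilon,\bs x+\varepsilon)$, the representation of Proposition~\ref{class-S} shows that $\tilde\rho$ has density $\pi^{-1}\Im g\equiv0$ there and no atom (no pole of $g$), whence $\bs x\notin\support(\tilde\rho)=\support(\mu)\cap\RR_*$ as announced.

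The part that requires genuine care is this last identification: everything before it is local and essentially formal, but here one must notice that the sign condition \eqref{deriv>0}, through $g'(\bs x)>0$, is exactly what pushes the implicitly defined branch $g$ into $\CC_+$ and lets the global uniqueness of Proposition~\ref{1st-order} take over. A lesser, but still nontrivial, point is the holomorphy check for $F$ in the case of unbounded $\support(\nu)$ or $\support(\tilde\nu)$, where one must use precisely the way $\cal D$ and $\widetilde{\cal D}$ were defined.
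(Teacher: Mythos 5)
Your proof is correct, and it reaches the conclusion by a genuinely different route from the paper's for the crucial identification step. Both arguments anchor an implicit function theorem at the real point $(\bs\tdl,\bs x)$, but the paper uses the \emph{real} implicit function theorem to produce a real branch $\underline\tdl(x)$ on a real neighborhood $V$ of $\bs x$, and then identifies $\underline\tdl(x)$ with the nontangential limit $\lim_{z\in\CC_+\to x}\tdl(z)$ through a quantitative stability estimate: the difference identity \eqref{GtG}, whose right-hand side vanishes as $z\to x$ (Cauchy--Schwarz, Lemma \ref{m-borne}, and $\bs\delta\in{\cal D}$), combined with a rerun of the positive-definite-matrix argument of Lemma \ref{lm-cvg-delta} to keep the factor $1-zx\Gamma(z,x)\widetilde\Gamma(z,x)$ away from zero. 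You instead use the \emph{holomorphic} implicit function theorem, the Schwarz symmetry $F(\overline{\tdl},\overline z)=\overline{F(\tdl,z)}$, and the observation that \eqref{deriv>0} forces $g'(\bs x)>0$, so that $g$ maps the upper half-disc into $\CC_+$; the identification with $\tdl$ then follows from uniqueness of the $\CC_+$-solution of $F(\cdot,z)=0$ plus analytic continuation. This is essentially the mechanism the paper deploys later in the proof of Theorem \ref{holom}, and it is shorter and cleaner here. The one point you should make explicit is the uniqueness you invoke: Proposition \ref{1st-order} asserts uniqueness of the pair $(\delta,\tdl)\in\CC_+^2$ for the split system, and the paper's remark after \eqref{tdelta-seul} restates this as uniqueness of the $\CC_+$-root of $F(\cdot,z)$; passing from a $\CC_+$-root $\tdl_0$ of $F(\cdot,z)=0$ back to a $\CC_+^2$-solution of the system requires checking that the associated $\delta_0=c\int t(-z(1+\tdl_0 t))^{-1}\nu(dt)$ also lies in $\CC_+$, which is not automatic for an arbitrary $\tdl_0\in\CC_+$ and deserves a line (for your $g$ near $\bs x$ it follows by continuity from $\Im(z\,g(z))>0$ on $\CC_+$ near $\bs x$, or one can simply cite the paper's stated equivalence). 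The paper's estimate-based route avoids this issue entirely, at the cost of repeating the machinery of Lemma \ref{lm-cvg-delta}.
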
 
\begin{proof}
Let $(\bs\tdl,\bs x)$ be a solution of Equations \eqref{reciproque} such
that $\bs\tdl \in \widetilde{\mathcal D}$, $\bs\delta \in {\mathcal D}$, and
Inequality \eqref{deriv>0} is satisfied. 
Define on a small enough open neighborhood of $(\bs\tdl,\bs x)$ in $\RR^2$ 
the function 
\begin{equation}
\label{F-real} 
\bs F(\tdl,x) = 
\int\frac{t} {\displaystyle{-x + 
ct \int \frac{u}{1+u\tilde\delta} \nu(du)} } 
\tilde\nu(dt) - \tdl  . 
\end{equation} 
Clearly, $\bs F(\bs\tdl,\bs x) = 0$, and a small calculation shows that 
\[
\frac{\partial \bs F}{\partial\tdl} (\bs\tdl,\bs x) = 
-1+\bs x^2 \bs\gamma(\bs x,\bs{\tilde\delta}) \bs{\tilde\gamma}(\bs x,\bs\delta)
< 0 
\]
(in this calculation, integration and differentiation can be exchanged since
$\bs\tdl \in \widetilde{\mathcal D}$ and $\bs\delta \in {\mathcal D}$). 
By the implicit function theorem, there is a real function $\underline\tdl(x)$ 
defined on a real neighborhood $V$ of $\bs x$ such that 
$\underline\tdl(\bs x) = \bs\tdl$ and every couple $(x,\underline\tdl(x))$ 
for $x \in V$ satisfies the assumptions of the statement of the proposition.
To establish the proposition, it will be enough to show that for any $x \in V$, 
$\underline\tdl(x) = \lim_{z\in\CC_+ \to x} \tdl(z)$. \\ 
Fixing $x\in V$, it is easy to see that for any $z \in \CC_+$,  
\begin{equation}
\label{GtG} 
(1 - z x \Gamma(z, x) 
\widetilde\Gamma(z, x) ) 
(\tdl(z) - \underline\tdl(x)) = (z-x)
\int \frac{t}
{z x( 1 + \delta(z) t)
(1 + \underline\delta(x) t)} \, \tilde\nu(dt)   
\end{equation} 
where $\underline\delta(x) = - c x^{-1} \int t(1+\underline\tdl(x) t)^{-1} 
\nu(dt)$, 
\begin{align*}
\Gamma(z, x) &= 
c \int \frac{t^2}
{z x(1+\tdl(z)t)(1+\underline\tdl(x)t)} \nu(dt), \quad \text{and} \\ 
\widetilde\Gamma(z, x) &= 
\int \frac{t^2}
{z x(1+\delta(z)t)(1+\underline\delta(x)t)} \tilde\nu(dt) . 
\end{align*}
By the Cauchy-Schwarz inequality, Lemma \ref{m-borne} and the fact that 
$\bs\delta \in {\cal D}$, the integral at the right hand side of~\eqref{GtG}
remains bounded as $z\to x$. Repeating the derivations made in the proof of 
Lemma~\ref{lm-cvg-delta} (the case where $\nu$ or $\tilde\nu$ is a Dirac 
measure being dealt with as in~\cite{sil-choi95}), we can show that 
$\tdl(x) = \underline\tdl(x)$. 
\end{proof} 

\subsection{Practical procedure for determining $\support(\mu)$} 
\label{practical-support}

Proposition \ref{direct} shows that for any 
$\bs x \in \support(\mu)^c \cap \RR_*$, there exists a couple 
$(\bs\delta,\bs\tdl)$ that satisfies the assumptions of Proposition 
\ref{recip}. The reverse is shown by Proposition \ref{recip}. \\
These observations suggest a practical procedure for determining the support
of $\mu$. We let $\bs\tdl$ run through $\widetilde{\mathcal D}$. For every one 
of these $\bs\tdl$, we compute 
\[
\psi(\bs\tdl) = c \int \frac{t} {1+\bs\tdl t} \nu(dt)
\]
then we find numerically the solutions of the equation in $\bs x$ 
\[
\bs\tdl = \int \frac{t}{-\bs x+ \psi(\bs\tdl) t} \tilde\nu(dt) . 
\]
for which $-\bs x^{-1} \psi(\bs\tdl) \in \mathcal D$. 
Among these solutions, we retain those points $\bs x$ for which 
\[
 1 - 
 c \int \frac{t^2} {(1+\bs\tdl t)^2} \nu(dt) 
 \int \frac{t^2}{(\bs x - \psi(\bs\tdl) t)^2} \tilde\nu(dt) 
 > 0  .
\] 
What is left after making $\bs\tdl$ run through $\widetilde{\mathcal D}$ is 
$\support(\mu) \cap \RR_*$. 
The figure gives an idea of the result. 

\begin{figure}[h] 
\centering
\includegraphics[width=0.8\linewidth]{./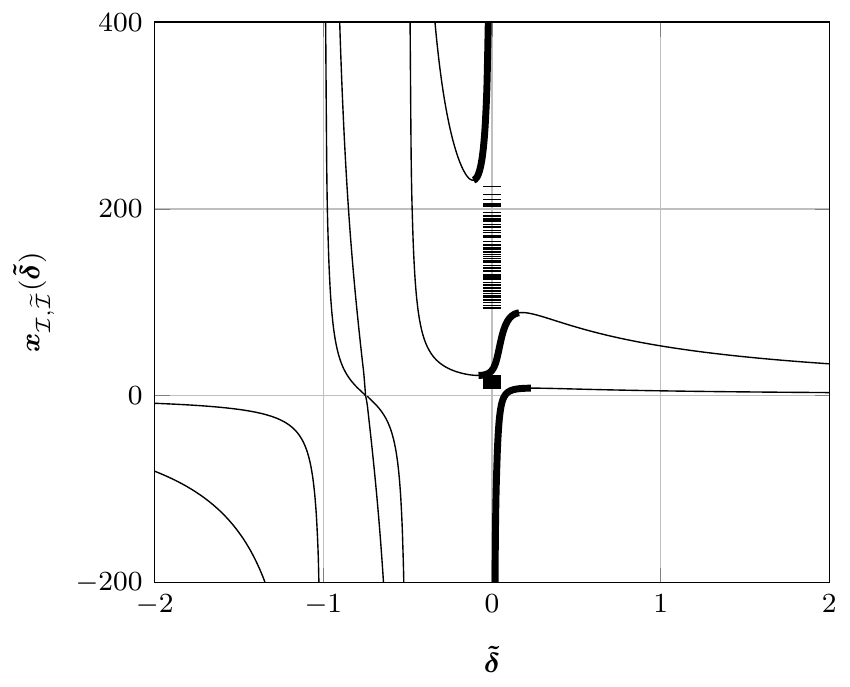}
\caption{${\bs x}_{\mathcal I,\widetilde{\mathcal I}}({\bs\tdl})$ for each component pairs $\mathcal I$ of $\mathcal D$ and $\widetilde{\mathcal I}$ of $\widetilde{\mathcal D}$. In thick line, positions for which $1-{\bs x}^2{\bs\gamma}({\bs x},{\bs\tdl}){\bs{\tilde{\gamma}}}({\bs x},{\bs\delta})>0$. On the vertical axis, in black dashes, empirical eigenvalue positions for $N=1000$. Setting: $c=10$, $\nu=1/2(\delta_1+\delta_2)$, $\tilde{\nu}=1/2(\delta_1+\delta_{10})$.}
\label{fig:x_I}
\end{figure}

\subsection{Properties of the graph of $\bs x$ versus $\bs\tdl$ 
and the consequences}
\label{graph-properties}

The two following propositions will help us bring out some of the
properties of the graph of $\bs x$ versus $\bs\tdl$. In their statements, we 
assume that the triples $(\bs\tdl_1, \bs\delta_1, \bs x_1)$ and 
$(\bs\tdl_2, \bs\delta_2, \bs x_2)$ satisfy both the statement of Proposition
\ref{recip}. 

\begin{lemma}
\label{disjoint}
$\bs\tdl_1 \neq \bs\tdl_2 \Rightarrow \bs x_1 \neq \bs x_2$ and 
$\bs\delta_1 \neq \bs\delta_2 \Rightarrow \bs x_1 \neq \bs x_2$. 
\end{lemma}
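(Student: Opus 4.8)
The plan is to prove the two contrapositive statements at once: assuming $\bs x_1 = \bs x_2 =: \bs x$, I will show $\bs\tdl_1 = \bs\tdl_2$ and $\bs\delta_1 = \bs\delta_2$. First I would subtract the two instances of the second equation in \eqref{reciproque} (which share the same $\bs x$), and then the two instances of the first, using the elementary identity $\frac{1}{1+at}-\frac{1}{1+bt} = \frac{(b-a)t}{(1+at)(1+bt)}$. This yields
\[
\bs\tdl_1 - \bs\tdl_2 = \bs x\,(\bs\delta_1 - \bs\delta_2)\,\bs{\tilde\gamma}_{12},
\qquad
\bs\delta_1 - \bs\delta_2 = \bs x\,(\bs\tdl_1 - \bs\tdl_2)\,\bs\gamma_{12},
\]
where I set $\bs\gamma_{12} = c\int \frac{t^2}{\bs x^2(1+\bs\tdl_1 t)(1+\bs\tdl_2 t)}\,\nu(dt)$ and $\bs{\tilde\gamma}_{12} = \int \frac{t^2}{\bs x^2(1+\bs\delta_1 t)(1+\bs\delta_2 t)}\,\tilde\nu(dt)$. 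These two integrals are absolutely convergent: the pointwise bound $|ab| \le \tfrac12(a^2+b^2)$ applied to the two factors reduces their finiteness to that of the integrals $\bs\gamma(\bs x,\bs\tdl_i)$ and $\bs{\tilde\gamma}(\bs x,\bs\delta_i)$ of Proposition \ref{recip}, which are finite because $\bs\tdl_i \in \widetilde{\cal D}$ and $\bs\delta_i \in {\cal D}$. Substituting the second identity into the first (and symmetrically) gives
\[
\bigl(1 - \bs x^2\,\bs\gamma_{12}\,\bs{\tilde\gamma}_{12}\bigr)\,(\bs\tdl_1 - \bs\tdl_2) = 0,
\qquad
\bigl(1 - \bs x^2\,\bs\gamma_{12}\,\bs{\tilde\gamma}_{12}\bigr)\,(\bs\delta_1 - \bs\delta_2) = 0.
\]

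It then remains to show $1 - \bs x^2 \bs\gamma_{12}\bs{\tilde\gamma}_{12} \ne 0$; in fact I would show it is positive. The Cauchy--Schwarz inequality gives $|\bs\gamma_{12}| \le \sqrt{\bs\gamma(\bs x,\bs\tdl_1)\,\bs\gamma(\bs x,\bs\tdl_2)}$ and $|\bs{\tilde\gamma}_{12}| \le \sqrt{\bs{\tilde\gamma}(\bs x,\bs\delta_1)\,\bs{\tilde\gamma}(\bs x,\bs\delta_2)}$, whence
\[
\bs x^2\,|\bs\gamma_{12}\,\bs{\tilde\gamma}_{12}| \le
\sqrt{\bs x^2\,\bs\gamma(\bs x,\bs\tdl_1)\,\bs{\tilde\gamma}(\bs x,\bs\delta_1)}\;\cdot\;
\sqrt{\bs x^2\,\bs\gamma(\bs x,\bs\tdl_2)\,\bs{\tilde\gamma}(\bs x,\bs\delta_2)} < 1,
\]
since, by hypothesis \eqref{deriv>0} applied to each of the triples $(\bs\tdl_i,\bs\delta_i,\bs x_i) = (\bs\tdl_i,\bs\delta_i,\bs x)$, both factors under the square roots are strictly less than $1$. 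Hence $1 - \bs x^2\bs\gamma_{12}\bs{\tilde\gamma}_{12} > 0$, forcing $\bs\tdl_1 = \bs\tdl_2$ and $\bs\delta_1 = \bs\delta_2$, which is exactly the contrapositive of both asserted implications. The degenerate case where $\nu$ or $\tilde\nu$ is a Dirac measure is handled as in \cite{sil-choi95}, as in the proofs of Lemma \ref{lm-cvg-delta} and Proposition \ref{recip}.

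I expect no serious obstacle: this is a ``real-point'' and markedly simpler cousin of the computation in Lemma \ref{lm-cvg-delta}, with no limiting procedure to control. The only point requiring care is that, the $\bs\tdl_i$ and $\bs\delta_i$ being real numbers of unrestricted sign, the cross quantities $\bs\gamma_{12},\bs{\tilde\gamma}_{12}$ are not a priori sign-definite, so one cannot directly invoke an inequality of the form $\bs x^2\gamma\tilde\gamma < 1$; it is precisely the combination of Cauchy--Schwarz with hypothesis \eqref{deriv>0} that bounds their absolute values and closes the argument. Verifying absolute convergence of the cross-integrals via $|ab| \le \tfrac12(a^2+b^2)$ is equally routine.
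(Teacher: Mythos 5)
Your proof is correct, but it takes a genuinely different route from the paper's. The paper disposes of the lemma in one line: the proof of Proposition \ref{recip} ends by establishing that any triple $(\bs\tdl,\bs\delta,\bs x)$ satisfying its hypotheses has $\bs\tdl = \lim_{z\in\CC_+\to\bs x}\tdl(z)$ and $\bs\delta = \lim_{z\in\CC_+\to\bs x}\delta(z)$, so $\bs x_1=\bs x_2$ forces $\bs\tdl_1=\bs\tdl_2$ and $\bs\delta_1=\bs\delta_2$ by uniqueness of the limit. You instead give a self-contained algebraic argument on the real axis: the real-point analogue of identity \eqref{diff} with $z_1=z_2=\bs x$ but two different candidate solutions, closed by Cauchy--Schwarz together with hypothesis \eqref{deriv>0} applied to each triple. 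Your computations check out — the cross-integrals $\bs\gamma_{12},\bs{\tilde\gamma}_{12}$ are absolutely convergent for the reason you give, the factorization $(1-\bs x^2\bs\gamma_{12}\bs{\tilde\gamma}_{12})(\bs\tdl_1-\bs\tdl_2)=0$ is exact, and since $\bs\gamma(\bs x,\bs\tdl_i)\,\bs{\tilde\gamma}(\bs x,\bs\delta_i)\ge 0$ the bound $\bs x^2|\bs\gamma_{12}\bs{\tilde\gamma}_{12}|<1$ follows from \eqref{deriv>0} for $i=1,2$. You correctly identify the one delicate point, namely that the cross quantities are not sign-definite so one cannot quote an inequality of the form $\bs x^2\gamma\tilde\gamma<1$ directly; this is precisely the mechanism the paper itself uses in the proof of Lemma \ref{increase}. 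Your approach buys independence from the boundary-value identification in Proposition \ref{recip} (and in particular would survive if one only knew the triples as solutions of the real system \eqref{reciproque}--\eqref{deriv>0}), at the cost of a longer computation; the Dirac-measure caveat at the end of your write-up is actually unnecessary here, since nothing in your argument uses non-degeneracy of $\nu$ or $\tilde\nu$.
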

\begin{proof}
We know that $\bs\tdl_i = \lim_{z\in\CC_+\to \bs x_i} \tdl(z)$ for $i=1,2$. 
Assume that $\bs\tdl_1 \neq \bs\tdl_2$. Then having $\bs x_1 = \bs x_2$ would 
violate this convergence. 
\end{proof}


\begin{lemma}
\label{increase} 
If $\bs\tdl_1 < \bs\tdl_2$, if $\bs x_1 \bs x_2 > 0$, and if 
$[\bs\delta_1 \wedge \bs\delta_2, \bs\delta_1 \vee \bs\delta_2 ] 
\subset {\mathcal D}$, then $\bs x_1 < \bs x_2$. 
\end{lemma}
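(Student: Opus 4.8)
The plan is to push the master identity \eqref{diff} to the two real points $\bs x_1,\bs x_2$ and then read off the sign of each of its three factors. I would first record that, each triple satisfying the hypotheses of Proposition \ref{recip}, we have $\bs x_i\notin\support(\mu)$ and — as in the proof of that proposition — $\bs\tdl_i=\lim_{z\in\CC_+\to\bs x_i}\tdl(z)$, while $\bs\delta_i=\lim_{z\in\CC_+\to\bs x_i}\delta(z)$ follows from the first relation in \eqref{reciproque} and dominated convergence. In particular $\bs\tdl_1<\bs\tdl_2$ already forces $\bs x_1\neq\bs x_2$ by Lemma \ref{disjoint}. Letting $z_1\to\bs x_1$ and $z_2\to\bs x_2$ in \eqref{diff}, the integrands being dominated thanks to $\bs\tdl_i\in\widetilde{\cal D}$ and $\bs\delta_i\in\cal D$ exactly as in the extension of \eqref{diff} performed in the proof of Proposition \ref{direct}, I obtain
\begin{multline*}
\bigl(1-\bs x_1\bs x_2\,\gamma(\bs x_1,\bs x_2)\,\tilde\gamma(\bs x_1,\bs x_2)\bigr)\,(\bs\tdl_1-\bs\tdl_2)\\
=(\bs x_1-\bs x_2)\int\frac{t}{\bs x_1\bs x_2\,(1+\bs\delta_1 t)(1+\bs\delta_2 t)}\,\tilde\nu(dt),
\end{multline*}
where $\gamma,\tilde\gamma$ at these real arguments are the obvious specializations of \eqref{gamma}; note that $\gamma(\bs x_i,\bs x_i)=\bs\gamma(\bs x_i,\bs\tdl_i)$ and $\tilde\gamma(\bs x_i,\bs x_i)=\bs{\tilde\gamma}(\bs x_i,\bs\delta_i)$. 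Since $\bs\tdl_1-\bs\tdl_2<0$, the whole matter reduces to showing that the bracket on the left and the integral on the right are both strictly positive: then $\bs x_1-\bs x_2<0$, as claimed.

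For the bracket, the point is that the Cauchy--Schwarz inequality turns the ``cross'' quantity into a geometric mean of the ``diagonal'' ones governed by \eqref{deriv>0}. Writing $\bs x_1\bs x_2\gamma(\bs x_1,\bs x_2)=c\int\frac{t}{1+\bs\tdl_1 t}\cdot\frac{t}{1+\bs\tdl_2 t}\,\nu(dt)$ and applying Cauchy--Schwarz gives $|\bs x_1\bs x_2\gamma(\bs x_1,\bs x_2)|\le\bigl(\bs x_1^2\bs\gamma(\bs x_1,\bs\tdl_1)\bigr)^{1/2}\bigl(\bs x_2^2\bs\gamma(\bs x_2,\bs\tdl_2)\bigr)^{1/2}$, and likewise $|\bs x_1\bs x_2\tilde\gamma(\bs x_1,\bs x_2)|\le\bigl(\bs x_1^2\bs{\tilde\gamma}(\bs x_1,\bs\delta_1)\bigr)^{1/2}\bigl(\bs x_2^2\bs{\tilde\gamma}(\bs x_2,\bs\delta_2)\bigr)^{1/2}$; multiplying these and dividing by $|\bs x_1\bs x_2|$ yields
\[
\bigl|\bs x_1\bs x_2\,\gamma(\bs x_1,\bs x_2)\,\tilde\gamma(\bs x_1,\bs x_2)\bigr|\le\prod_{i=1,2}\bigl(\bs x_i^2\,\bs\gamma(\bs x_i,\bs\tdl_i)\,\bs{\tilde\gamma}(\bs x_i,\bs\delta_i)\bigr)^{1/2}<1,
\]
the last inequality being \eqref{deriv>0} applied to each of the two triples (the integrals in the bound are finite by the conditions $\bs\tdl_i\in\widetilde{\cal D}$, $\bs\delta_i\in\cal D$, as in the proof of Proposition \ref{recip}). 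Hence $1-\bs x_1\bs x_2\gamma(\bs x_1,\bs x_2)\tilde\gamma(\bs x_1,\bs x_2)>0$. For the integral, I would use the connectedness hypothesis: for each fixed $t\in\support(\tilde\nu)$ with $t>0$, the affine map $\bs\delta\mapsto 1+\bs\delta t$ is continuous and nonvanishing on $[\bs\delta_1\wedge\bs\delta_2,\bs\delta_1\vee\bs\delta_2]\subset\cal D$ — each $\bs\delta$ in this segment is either $0$ or satisfies $-\bs\delta^{-1}\notin\support(\tilde\nu)$ — hence keeps a constant sign there, so that $(1+\bs\delta_1 t)(1+\bs\delta_2 t)>0$. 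Combined with $\bs x_1\bs x_2>0$ and $\tilde\nu((0,\infty))>0$ (recall $\tilde\nu\neq\bs d_0$), this makes the displayed integral strictly positive, and the proof is complete.

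The step I expect to be the main obstacle is the estimate of the bracket: the hypotheses only provide the ``diagonal'' control \eqref{deriv>0}, and the non-obvious ingredient is the Cauchy--Schwarz factorization showing that the ``cross'' product $\bs x_1\bs x_2\gamma(\bs x_1,\bs x_2)\tilde\gamma(\bs x_1,\bs x_2)$ is dominated by the geometric mean of the two diagonal quantities. A secondary, purely technical point is the justification that \eqref{diff} extends by continuity to real arguments under $\bs\tdl_i\in\widetilde{\cal D}$, $\bs\delta_i\in\cal D$, which I would carry out exactly as the corresponding extension in the proof of Proposition \ref{direct}. The connectedness assumption on the $\bs\delta$-segment serves only the elementary role of ruling out a sign clash in the right-hand integral.
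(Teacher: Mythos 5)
Your proof is correct and follows essentially the same route as the paper: extend the identity \eqref{diff} to the two real points, show the left-hand bracket is positive via Cauchy--Schwarz combined with \eqref{deriv>0}, and show the right-hand integral is positive because the segment $[\bs\delta_1\wedge\bs\delta_2,\bs\delta_1\vee\bs\delta_2]\subset{\mathcal D}$ forbids a sign change of $1+\bs\delta t$ on $\support(\tilde\nu)$. The only difference is that you spell out the Cauchy--Schwarz factorization of the cross term into the geometric mean of the two diagonal quantities, which the paper leaves as a one-line assertion.
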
 


\begin{proof}
We use the identity 
\begin{multline*} 
\Bigl(1 - \bs x_1 \bs x_2 \gamma(\bs x_1, \bs x_2) 
\tilde\gamma(\bs x_1, \bs x_2) \Bigr) 
(\bs\tdl_1 - \bs\tdl_2) \\ = 
(\bs x_1 - \bs x_2) 
\int \frac{t}
{\bs x_1 \bs x_2( 1 + \bs\delta_1 t)(1 + \bs\delta_2 t)} \, \tilde\nu(dt) ,  
\end{multline*}
see \eqref{diff}. 
By the Cauchy-Schwarz inequality, 
$1 - \bs x_1 \bs x_2 \gamma(\bs x_1, \bs x_2) \tilde\gamma(\bs x_1, \bs x_2)
> 0$. 
Let us show that the integral $I$ at the right hand side of the equation 
above is positive. 
Assume that for some $t \in\support(\tilde\nu)$, the numbers 
$1 + \bs\delta_1 t$ and $1 + \bs\delta_2 t$ do not have the same sign. Then
there exists $\bs\delta \in ( \bs\delta_1 \wedge \bs\delta_2, 
\bs\delta_1 \vee \bs\delta_2)$ such that $1 + \bs\delta t = 0$. But this
contradicts 
$[\bs\delta_1 \wedge \bs\delta_2, \bs\delta_1 \vee \bs\delta_2 ] 
\subset {\mathcal D}$. Hence $I > 0$, which shows that $\bs x_1 - \bs x_2$ and 
$\bs\tdl_1 - \bs\tdl_2$ have the same sign. 
\end{proof} 

In order to better understand the incidence of these propositions, let us 
describe more formally the procedure for determining the support of $\mu$. 
Equations \eqref{reciproque} can be rewritten as 
$- \bs x \bs\delta \bs\tdl = g(\bs\tdl) = \tilde g(\bs\delta)$ where
\[
g(\bs\tdl) = c \int \frac{\bs\tdl t} {1+\bs\tdl t} \nu(dt) \quad \text{and} 
\quad 
\tilde g(\bs\delta) = 
\int \frac{\bs\delta t} {1+\bs\delta t} \tilde\nu(dt) 
\]
are both increasing on any interval of $\widetilde{\cal D}$ and $\cal D$
respectively. 
Let $\mathcal I$ and $\widetilde{\mathcal I}$ be two connected components 
of $\mathcal D$ and $\widetilde{\mathcal D}$ respectively\footnote{To give 
an example,
assume that $\support(\nu) \cap \RR_* = [a_1, b_1] \cup [a_2, b_2] \cup 
\cdots \cup [a_K, b_K ]$ where $0 < a_1 \leq b_1 < a_2 
\leq b_2 < \cdots < a_K \leq b_K < \infty$. Then the connected components of
$\widetilde{\cal D}$ are $(-\infty, -a_1^{-1}), (-b_1^{-1}, -a_2^{-1}), 
\ldots, (-b_{K-1}^{-1}, -a_{K-1}^{-1})$, and $(-b_{K}^{-1}, \infty)$.}. 
Assume that 
$\tilde g({\mathcal I}) \cap g(\widetilde{\mathcal I}) \neq \emptyset$. 
Since $\tilde g$ is increasing, it has a local inverse
$\tilde g^{-1}_{\mathcal I, \widetilde{\mathcal I}}$ on 
$g(\widetilde{\mathcal I})$. Let $\bs\delta = 
\tilde g^{-1}_{\mathcal I, \widetilde{\mathcal I}} \circ g(\bs\tdl)$ 
and consider the function 
\begin{equation}
\label{eq-x} 
\bs x_{\mathcal I, \widetilde{\mathcal I}}(\bs\tdl) = 
- \frac{g(\bs\tdl)}{\bs\delta \bs\tdl} = 
- \frac{g(\bs\tdl)}{\bs\tdl \times 
\tilde g^{-1}_{\mathcal I, \widetilde{\mathcal I}} \circ g(\bs\tdl)} , 
\end{equation} 
with domain the open set 
$\dom(\xII) = \{ \bs\tdl \in \widetilde{\cal I} \, : \, \exists \bs\delta 
\in {\cal I} \ \text{such that} \ \tilde g(\bs\delta) = g(\bs\tdl) \ 
\text{and} \ \bs\delta \neq 0 \}$. 
Computing $\bs x_{\mathcal I, \widetilde{\mathcal I}}(\bs\tdl)$ on all 
connected components $\cal I$ and $\widetilde{\cal I}$ and dropping the values 
of $\bs x$ for which 
$1 - 
\bs x^2 \bs\gamma(\bs x,\bs{\tilde\delta}) \bs{\tilde\gamma}(\bs x,\bs\delta) 
> 0$, we are of course left with $\support(\mu) \cap \RR_*$.  \\ 
Thanks to Lemmas \ref{disjoint}-\ref{increase}, the functions 
$\bs x_{\mathcal I, \widetilde{\mathcal I}}$ have the following properties:
\begin{enumerate} 
\item\label{one-branch} 
For any $\bs x_0 \in \RR_*$, at most one function 
$\bs x_{\mathcal I, \widetilde{\mathcal I}}$ satisfies 
$\bs x_{\mathcal I, \widetilde{\mathcal I}}(\bs\tdl) = \bs x_0$ and 
$\bs x'_{\mathcal I, \widetilde{\mathcal I}}(\bs\tdl) > 0$ 
by Lemma \ref{disjoint}. \\
Note that more than one function $\bs x_{\mathcal I, \widetilde{\mathcal I}}$ 
can be possibly increasing at a given $\bs\tdl \in \widetilde{\mathcal D}$, as 
the figure shows. 

\item\label{increase-once} 
We show below that there is exactly one couple $({\cal I}, \widetilde{\cal I})$
for which $\bs x_{\mathcal I, \widetilde{\mathcal I}}$ has negative values
and is increasing from $-\infty$ to zero where it is negative. 
Moreover, for any couple $({\cal I}, \widetilde{\cal I})$ and for any 
$[ \bs\tdl_1 , \bs \tdl_2 ] \in \widetilde{\mathcal I}$ such that
$\bs x_{\mathcal I, \widetilde{\mathcal I}}(\bs\tdl_i) > 0$ and 
$\bs x'_{\mathcal I, \widetilde{\mathcal I}}(\bs\tdl_i) > 0$, $i=1,2$, 
the function  $\bs x_{\mathcal I, \widetilde{\mathcal I}}(\tdl)$ never 
decreases between $\tdl_1$ and $\tdl_2$ by Lemma \ref{increase}. \\
In summary, if a branch of a 
$\bs x_{\mathcal I, \widetilde{\mathcal I}}(\tdl)$ is increasing at two points 
$\tdl_1$ and $\tdl_2$, then it never decreases between these two points.

\item\label{les-bords} 
Let $b = \sup(\support(\nu)) \in (0,\infty]$ and 
$\tilde b = \sup(\support(\tilde\nu)) \in (0, \infty]$, and let us study the
behavior of $\bs x_{\mathcal I, \widetilde{\mathcal I}}$ when 
$\widetilde{\cal I} = (-b^{-1}, \infty)$ and 
${\cal I} = (-\tilde b^{-1}, \infty)$. Assume $b = \tilde b = \infty$. 
By the fact that the functions $\delta(x)$ and $\tdl(x)$ are both positive 
and increasing on $(-\infty, 0)$ and by Lemma \ref{disjoint}, 
the branch $\bs x_{\mathcal I, \widetilde{\mathcal I}}(\bs\tdl)$ is increasing 
where it is negative, it is the only branch having this property, and 
$\bs x_{\mathcal I, \widetilde{\mathcal I}}(\bs\tdl) \to -\infty$ as 
$\bs\tdl \downarrow 0$.  \\
Assume now that $b = \infty$ and $\tilde b < \infty$. Here it is easy to 
notice that $g((-\tilde b^{-1},0)) \cap \tilde g((0,\infty)) = \emptyset$ 
which implies that we can replace ${\cal I}$ with $(0,\infty)$. 
As in the former case, the graph of 
$\bs x_{\mathcal I, \widetilde{\mathcal I}}$ consists in one branch that has
the same properties as regards the negative values of $\bs x$. 
The same conclusion holds when $b < \infty$ and $\tilde b = \infty$. \\
Finally, assume that $b, \tilde b < \infty$. Here 
$g(\bs\tdl)/\bs\tdl \approx C$ and $\bs\delta \approx C' \bs\tdl$ near zero, 
where $C, C' > 0$. Consequently, the graph of 
$\bs x_{\mathcal I, \widetilde{\mathcal I}}(\bs\tdl)$ consists in two
branches, one on $(-b^{-1}, 0)$ and one on $(0,\infty)$. The first branch
converges to infinity as $\bs\tdl \uparrow 0$, showing that $\mu$ is 
compactly supported, and the second branch behaves below zero as its analogues 
above. These two branches appear on the figure.


\item\label{no-use}
Assume that $a = \inf(\support(\nu) \cap \RR_*) > 0$ and let 
$\widetilde{\cal I} = (-\infty, - a^{-1})$. Then $g(\bs\tdl)$ increases from 
$c$ as $\bs\tdl$ increases from $-\infty$. If $\bs\delta < 0$, then 
$\bs x_{\mathcal I, \widetilde{\mathcal I}}(\bs\tdl) < 0$ since
$g(\bs\tdl)/\bs\tdl < 0$, and the conclusions of Item \eqref{les-bords} show 
that the branches $\bs x_{\mathcal I, \widetilde{\mathcal I}}$ need not be 
considered for determining $\support(\mu)$ when ${\cal I}\subset (-\infty, 0)$. 
It remains to study $\bs x_{\mathcal I, \widetilde{\mathcal I}}$ for 
${\cal I} = (-\tilde b^{-1}, \infty)$. On $(0, \infty)$, the
function $\tilde g(\bs\delta)$ increases from $0$ to $1$, hence $\tilde
g((0, \infty)) \cap g(\widetilde{\cal I}) \neq \emptyset$ if and
only if $c < 1$. In that case, it can be checked that $\bs x_{\mathcal I,
\widetilde{\mathcal I}}(\bs\tdl)$ increases from $0$ as $\bs\tdl$ increases
from $-\infty$. In conclusion, if $a > 0$ and $c < 1$, then 
$\inf(\support(\mu) \cap \RR_*) > 0$, and the location of this infimum is 
provided by the branch $\bs x_{\mathcal I, \widetilde{\mathcal I}}$. \\ 
Similarly, if $\tilde a = \inf(\support(\tilde\nu) \cap \RR_*) > 0$, 
${\cal I} = (-\infty, -\tilde a^{-1})$ and 
$\widetilde{\cal I} \subset (-\infty, 0)$, then the branches 
$\bs x_{\mathcal I, \widetilde{\mathcal I}}$ need not be considered. If in 
addition $c > 1$, then $\inf(\support(\mu) \cap \RR_*) > 0$, and the location 
of this infimum is provided by the branch 
$\bs x_{\mathcal I, \widetilde{\mathcal I}}$ for 
${\cal I} = (-\infty, -\tilde a^{-1})$ and 
$\widetilde{\cal I} = (-b^{-1}, \infty)$.

\end{enumerate} 

We terminate this paragraph with the following two results:

\begin{proposition}
\label{bulks} 
Assume that $\support(\nu) \cap \RR_*$ and $\support(\tilde\nu) \cap \RR_*$  
consist in $K$ and $\widetilde K$ connected components respectively. Then 
$\support(\mu) \cap \RR_*$ consists in at most $K \widetilde K$ connected 
components. 
\end{proposition}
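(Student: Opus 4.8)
The plan is to bound instead the number of connected components of the complement $\support(\mu)^c \cap (0,\infty)$ (recall $\support(\mu) \subset \RR_+$, so $\support(\mu) \cap \RR_* = \support(\mu) \cap (0,\infty)$). If $\support(\mu) \cap (0,\infty)$ has $m$ components, its complement in $(0,\infty)$ consists of the $m-1$ bounded gaps separating consecutive components, plus possibly the leftmost piece $(0,\inf\support(\mu))$ and the rightmost piece $(\sup\support(\mu),\infty)$; writing $\epsilon_0,\epsilon_\infty \in\{0,1\}$ for the indicators that these last two are nonempty, $m = N - 1 + \epsilon_0 + \epsilon_\infty$, where $N$ is the number of components of $\support(\mu)^c \cap (0,\infty)$. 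By Propositions \ref{direct} and \ref{recip} together with the description of the branch functions in Section \ref{graph-properties},
\[
\support(\mu)^c \cap \RR_* \;=\; \bigcup_{(\mathcal I,\widetilde{\mathcal I})} \bigl\{\, \bs x_{\mathcal I,\widetilde{\mathcal I}}(\bs\tdl) \;:\; \bs\tdl\in\dom(\xII),\ 1 - \bs x^2 \bs\gamma(\bs x,\bs{\tilde\delta}) \bs{\tilde\gamma}(\bs x,\bs\delta) > 0 \,\bigr\},
\]
the union running over the finitely many pairs $(\mathcal I,\widetilde{\mathcal I})$ of connected components of $\mathcal D$ and $\widetilde{\mathcal D}$, so that $N$ is at most the number of such pairs that actually contribute an interval.

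First I would check that each pair contributes at most one component to $\support(\mu)^c \cap (0,\infty)$. Within a single pair, $\bs x_{\mathcal I,\widetilde{\mathcal I}}(\bs\tdl) = -g(\bs\tdl)/\bigl(\bs\tdl\, \tilde g^{-1}_{\mathcal I,\widetilde{\mathcal I}}(g(\bs\tdl))\bigr)$ changes sign at most once as $\bs\tdl$ ranges over $\widetilde{\mathcal I}$ (the only sign change comes from the monotone $g$ crossing $0$; crossing $\bs\tdl = 0$ or $\bs\delta = 0$ leaves $\bs x$ continuous and nonzero), so the set $\{\bs\tdl : \bs x_{\mathcal I,\widetilde{\mathcal I}}(\bs\tdl) > 0\}$ is a single interval. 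On it the associated $\bs\delta$ stays in $\mathcal I \subset \mathcal D$, so Lemma \ref{increase} makes $\bs x_{\mathcal I,\widetilde{\mathcal I}}$ an order-isomorphism when restricted to the points where it is also increasing; since consecutive maximal increasing sub-arcs of a continuous (here real-analytic) function on an interval have overlapping or adjacent images, the image of the ``valid increasing positive'' part is a single interval. Consequently $N \leq \#\{(\mathcal I,\widetilde{\mathcal I})\}$, which is at most $(K+1)(\widetilde K+1)$, since $\mathcal D$ and $\widetilde{\mathcal D}$ have respectively at most $\widetilde K+1$ and $K+1$ connected components.

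To descend to $K\widetilde K$ one discards the ``border'' pairs, i.e.\ those in which $\mathcal I$ or $\widetilde{\mathcal I}$ is one of the (at most two) unbounded components of $\mathcal D$ or $\widetilde{\mathcal D}$. Items \eqref{les-bords}--\eqref{no-use} of Section \ref{graph-properties} are exactly tailored for this: the pair of rightmost components produces only a wholly negative branch (when $\support(\nu),\support(\tilde\nu)$ are unbounded) or the single tail interval $(\sup\support(\mu),\infty)$ (when $\mu$ is compactly supported), never an interior gap; when $\inf(\support(\nu)\cap\RR_*) > 0$ every pair with $\widetilde{\mathcal I}$ leftmost and $\mathcal I$ negative has empty or wholly negative image, and symmetrically with $\nu$ and $\tilde\nu$ interchanged; and, via the dichotomy $c \lessgtr 1$, one further border pair is eliminated (one of $g,\tilde g$ has range $(0,c)$ or $(0,1)$ on $(0,\infty)$, missing the relevant target range) while another is identified as the unique border pair delivering the tail interval $(0,\inf\support(\mu))$. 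Tallying these, one finds $N \leq K\widetilde K + 1 - \epsilon_0 - \epsilon_\infty$ in every configuration: in the degenerate cases ($\inf\support(\nu) = 0$, $\support(\nu)$ unbounded, $c=1$, $\nu$ or $\tilde\nu$ a Dirac mass, etc.) the number of border pairs effectively removed decreases, but so does the number of components of $\mathcal D,\widetilde{\mathcal D}$ and/or the value of $\epsilon_0+\epsilon_\infty$, keeping the balance. Hence $m = N - 1 + \epsilon_0 + \epsilon_\infty \leq K\widetilde K$.

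I expect the main obstacle to be precisely that last tally: while the geometric input (each branch is single-signed on its positive part and U-shaped there, by Lemma \ref{increase}) is routine, turning it into the sharp bound $K\widetilde K$ requires an exhaustive matching --- over the cases $c<1$, $c=1$, $c>1$, boundedness or not of each of $\support(\nu)$, $\support(\tilde\nu)$, and whether $0$ is a limit point of either support --- between the number of genuinely non-contributing border pairs and the number of tail intervals of $\support(\mu)^c \cap (0,\infty)$ that are not themselves components of $\support(\mu)$. An alternative route that may shorten the bookkeeping is to organise the argument around the $K\widetilde K$ pairs of \emph{bounded} (``middle'') components of $\mathcal D \times \widetilde{\mathcal D}$, each of which can produce at most one bounded interior gap, treating the border pairs as yielding only the two possible tail intervals; but the same verification of the border cases is still needed.
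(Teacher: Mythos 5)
Your overall strategy is the one the paper uses: each pair $(\mathcal I,\widetilde{\mathcal I})$ contributes at most one interval to $\support(\mu)^c$ (via Lemma \ref{increase}), and the ``border'' pairs are then discarded. But the execution has a genuine gap, concentrated exactly where you yourself locate the main obstacle. First, the elementary relation between $m$ and $N$ is inverted: from your own description ($m-1$ bounded gaps plus $\epsilon_0+\epsilon_\infty$ tail pieces) one gets $N=m-1+\epsilon_0+\epsilon_\infty$, hence $m=N+1-\epsilon_0-\epsilon_\infty$, not $m=N-1+\epsilon_0+\epsilon_\infty$. Consequently the inequality you actually need is $N\le K\widetilde K-1+\epsilon_0+\epsilon_\infty$, whereas the one you assert is $N\le K\widetilde K+1-\epsilon_0-\epsilon_\infty$. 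These coincide only when $\epsilon_0+\epsilon_\infty=1$; when $\epsilon_0=\epsilon_\infty=0$ your asserted bound is too weak to conclude, and when $\epsilon_0=\epsilon_\infty=1$ it is strictly stronger than what is true and fails whenever the bound $K\widetilde K$ is attained by a support that is compact and bounded away from zero (then $N=K\widetilde K+1>K\widetilde K-1$; the setting of Figure \ref{fig:x_I} is of this type). Second, the decisive tally is never carried out --- you explicitly defer the exhaustive case analysis over $c\lessgtr1$, boundedness of the supports, etc. --- and since it is calibrated against the wrong relation, completing it as stated would still not close the proof.

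The paper's count avoids this bookkeeping entirely, and you may want to adopt it. Set $s=\inf(\support(\mu)\cap\RR_*)$ and $S=\sup(\support(\mu))$ and count only the gaps of $\support(\mu)$ lying strictly inside $(s,S)$: the two tails are excluded by construction, so no indicators $\epsilon_0,\epsilon_\infty$ appear. Writing the connected components of $\widetilde{\mathcal D}$ as $\widetilde{\mathcal I}_0,\dots,\widetilde{\mathcal I}_K$ (the index $0$ present only when $\inf(\support(\nu)\cap\RR_*)>0$) and similarly for $\mathcal D$, Items \eqref{les-bords} and \eqref{no-use} show that any pair involving an index-$0$ component, as well as the pair of the two rightmost components, produces only negative values of $\bs x$, the tail $(0,s)$, or the tail $(S,\infty)$ --- never a point of $(s,S)$ at which $\bs x'_{\mathcal I,\widetilde{\mathcal I}}>0$. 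Hence only the $K\widetilde K-1$ remaining pairs can contribute interior gaps, each at most one by your (correct) per-pair argument, giving at most $K\widetilde K$ components of $\support(\mu)\cap\RR_*$. If you prefer to keep your formulation, you must prove $N\le K\widetilde K-1+\epsilon_0+\epsilon_\infty$, which amounts to showing that each nonempty tail is produced by a border pair and that every other border pair produces nothing positive --- i.e., exactly the interior count above.
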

\begin{proof}
When $\nu$ is compactly supported, 
$\support(\nu - \nu(\{0\}) \bs d_0) = [a_1, b_1] \cup [a_2, b_2] \cup 
\cdots \cup [a_K, b_K ]$ where 
$0 < a_1 \leq b_1 < a_2 \leq b_2 < \cdots < a_K \leq b_K < \infty$ 
or 
$0 = a_1 < b_1 < a_2 \leq b_2 < \cdots < a_K \leq b_K < \infty$.  
In the first case, the connected components of $\widetilde{\cal D}$ are 
$\widetilde{\cal I}_0 = (-\infty, - a_1^{-1})$, 
$\widetilde{\cal I}_1 = (- b_1^{-1}, - a_2^{-1}),\ldots, 
\widetilde{\cal I}_K = (- b_K^{-1}, \infty)$. In the second case, these
connected components are $\widetilde{\cal I}_1,\ldots, \widetilde{\cal I}_K$. 
If $\nu$ is not compactly supported, $a_K < b_K = \infty$ and the expressions 
of the connected components of $\widetilde{\cal D}$ are unchanged. 
With similar notations, the connected components of $\cal D$ are 
${\cal I}_0,\ldots, {\cal I}_{\widetilde K}$ or ${\cal I}_1,\ldots, 
{\cal I}_{\widetilde K}$ according to whether 
$\inf(\support(\tilde\nu) \cap \RR_*)$ is positive or not. 
Let $s = \inf(\support(\mu) \cap \RR_*)$ and $S = \sup(\support(\mu))$. 
Following the observations we just made, we notice that the only possible
$\bs x_{{\cal I}_k, \widetilde{\cal I}_{\tilde k}}(\bs\tdl) \in (s,S)$ such
that 
$\bs x_{{\cal I}_k, \widetilde{\cal I}_{\tilde k}}'(\bs\tdl) > 0$ 
are those for which $1\leq k \leq K$, $1\leq \tilde k\leq \widetilde K$, and 
$(k, \tilde k) \neq (K, \widetilde K)$. Therefore, the number of intervals
of $\support(\mu)^c \cap (s,S)$ is upper bounded by $K \widetilde K - 1$,
hence the result. 
\end{proof} 

\begin{proposition}
\label{compactness}
$\support(\mu)$ is compact if and only if $\support(\nu)$ and 
$\support(\tilde\nu)$ are compact.
\end{proposition}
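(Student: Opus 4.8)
The plan is to reduce everything to a statement about boundedness, via the identifications $\support(\mu)\cap\RR_* = \support(\rho)\cap\RR_* = \support(\tilde\rho)\cap\RR_*$ of Theorem~\ref{th-m(x)} together with the fact that $\support(\mu),\support(\rho),\support(\tilde\rho)\subset\RR_+$: since each of these sets is closed, it is compact iff it is bounded above. I would then prove the two implications separately.

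For the ``if'' direction, assume $\support(\nu)\subset[0,b]$ and $\support(\tilde\nu)\subset[0,\tilde b]$ with $b,\tilde b<\infty$, so that $\nu,\tilde\nu$ have finite moments of every order. Fix the polydisc $B=\{(\delta,\tilde\delta)\in\CC^2:|\delta|\le(2\tilde b)^{-1},\ |\tilde\delta|\le(2b)^{-1}\}$ and, for $|z|$ large, consider the map $\Phi_z(\delta,\tilde\delta)=\bigl(c\int t\,(-z(1+\tilde\delta t))^{-1}\nu(dt),\ \int t\,(-z(1+\delta t))^{-1}\tilde\nu(dt)\bigr)$, whose fixed points are exactly the solutions of \eqref{delta}--\eqref{deltat}. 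On $B$ one has $|1+\tilde\delta t|\ge\frac12$ for $t\in\support(\nu)$ and $|1+\delta t|\ge\frac12$ for $t\in\support(\tilde\nu)$, so a one-line estimate using only $M_\nu=\int t\,\nu(dt)$, $M_{\tilde\nu}=\int t\,\tilde\nu(dt)$ and their second-moment analogues shows that $\Phi_z$ maps $B$ into $B$ and is a $\frac12$-contraction on $B$ as soon as $|z|\ge R$, with $R$ depending only on $c,\nu,\tilde\nu$. Hence there is a unique fixed point $(\delta_0(z),\tilde\delta_0(z))\in B$, holomorphic on $\{|z|>R\}$ as a uniform limit of holomorphic Picard iterates. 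On the other hand, the bounds $|\delta(z)|\le cM_\nu/\Im z$ and $|\tilde\delta(z)|\le M_{\tilde\nu}/\Im z$, which follow from $|z(1+\tilde\delta(z)t)|\ge\Im z$ and $|z(1+\delta(z)t)|\ge\Im z$ (using $\Im(z\tilde\delta(z))>0$ and $\Im(z\delta(z))>0$, established in the proof of Proposition~\ref{class-S}), put $(\delta(z),\tilde\delta(z))$ in $B$ once $\Im z$ is large; being a fixed point of $\Phi_z$, it must then equal $(\delta_0(z),\tilde\delta_0(z))$. By uniqueness of analytic continuation, $\delta$ and $\tilde\delta$ extend holomorphically to $\{|z|>R\}$, in particular across $(R,\infty)\subset\RR$; since the Stieltjes transform of a positive measure is holomorphic exactly off its support, this forces $\support(\rho),\support(\tilde\rho)\subset[0,R]$, hence $\support(\mu)\subset[0,R]$ is compact. (Alternatively, this direction is the content of item~\eqref{les-bords} in Section~\ref{graph-properties} in the case $b,\tilde b<\infty$.)

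For the ``only if'' direction, assume $\support(\mu)\subset[0,S]$; then $\support(\rho),\support(\tilde\rho)\subset[0,S]$, so $\tilde\delta$ is holomorphic on $\CC\setminus[0,S]$, and on $(S,\infty)$ it is real, strictly increasing (its derivative $\int(t-x)^{-2}\tilde\rho(dt)$ is positive because $\tilde\rho\neq0$ by Proposition~\ref{class-S}), strictly negative, and tends to $0$ as $x\to\infty$ since $\tilde\rho$ is a finite measure. Consequently $x\mapsto-\tilde\delta(x)^{-1}$ is a continuous increasing bijection of $(S,\infty)$ onto an interval of the form $(T,\infty)$. Now suppose, for contradiction, that $\support(\nu)$ is unbounded, so $\widetilde{\cal D}=\{\bs\tdl\in\RR_*:-\bs\tdl^{-1}\notin\support(\nu)\}$. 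By Proposition~\ref{direct}, $\tilde\delta(x)\in\widetilde{\cal D}$ for every $x\in(S,\infty)\subset\support(\mu)^c\cap\RR_*$, i.e. $-\tilde\delta(x)^{-1}\notin\support(\nu)$; letting $x$ sweep $(S,\infty)$ yields $(T,\infty)\cap\support(\nu)=\emptyset$, contradicting unboundedness. Hence $\support(\nu)$ is compact, and the symmetric argument with $(\delta,\rho,{\cal D})$ in place of $(\tilde\delta,\tilde\rho,\widetilde{\cal D})$ gives that $\support(\tilde\nu)$ is compact. The only genuinely delicate point is this last direction: once $\support(\mu)$ is bounded, $\tilde\delta$ must sweep out a full one-sided neighbourhood of $0^-$ along $(S,\infty)$, so $-\tilde\delta^{-1}$ sweeps out an entire half-line $(T,\infty)$ that Proposition~\ref{direct} forces to miss $\support(\nu)$; everything else is routine bookkeeping.
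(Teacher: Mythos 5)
Your proof is correct, and it is worth comparing with the paper's, which is much terser. For the ``only if'' direction you and the paper use the same underlying mechanism: the paper invokes the identity $m_{\tilde\nu}(-\delta(z)^{-1}) = \delta(z) + z\delta^2(z)\tilde\delta(z)$ to extend $m_{\tilde\nu}$ analytically to a half-line $(A,\infty)$, while you route the same information through Proposition~\ref{direct} (whose proof is exactly that identity) and then make explicit the point the paper leaves implicit, namely that $-\tilde\delta(x)^{-1}$ really does sweep out an entire half-line $(T,\infty)$ as $x$ runs over $(S,\infty)$, because $\tilde\delta$ is negative, strictly increasing and tends to $0^-$ there. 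That added detail is the right thing to spell out. For the ``if'' direction the paper simply cites Item~\eqref{les-bords} of Section~\ref{graph-properties} (the branch of $\xII$ on $(-b^{-1},0)$ escapes to $+\infty$ as $\bs\tdl\uparrow 0$); your contraction-mapping argument is genuinely different and more self-contained: it bypasses the whole branch analysis, yields holomorphy of $\delta,\tilde\delta$ (hence of $m$) on a full neighbourhood of infinity in one stroke, and gives an explicit moment bound $R$ on $\sup\support(\mu)$, at the cost of being longer than a one-line citation. One step you compress is the final inference ``holomorphic across $(R,\infty)$ $\Rightarrow$ $\support(\rho)\subset[0,R]$'': to apply the standard non-continuability of a Stieltjes transform across its support you should note that your extension $\delta_0$ agrees with $\delta$ on the \emph{lower} half of $\{|z|>R\}$ as well (e.g.\ by the identity theorem on the connected set $\{|z|>R\}\setminus[R,\infty)$, or by the conjugation symmetry $\Phi_{\bar z}(\bar\delta,\bar{\tilde\delta})=\overline{\Phi_z(\delta,\tilde\delta)}$ together with uniqueness of the fixed point), so that $\delta_0$ is real on $(R,\infty)$ and Stieltjes inversion gives $\rho((R,\infty))=0$. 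This is routine but should be said.
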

\begin{proof}
The ``if'' part has been shown by Item \eqref{les-bords} above. Assume 
$\support(\mu)$ is compact. 
The fact that $\support(\rho) \cap \RR_* = \support(\tilde\rho)
\cap \RR_* = \support(\mu) \cap \RR_*$ and the equation 
$m_{\tilde\nu}(-\delta(z)^{-1}) = \delta(z) + z \delta^2(z) \tilde\delta(z)$
show that $m_{\tilde\nu}(z)$ can be analytically extended to $(A,\infty)$ 
for $A$ large enough, hence the compactness of $\support(\tilde\nu)$. 
A similar conclusion holds for $\support(\nu)$. 
\end{proof}

\subsection{Properties of the density of $\mu$ on $\RR_*$} 
\label{properties-density}

\begin{theorem} 
\label{holom}
The density $f(x)$ specified in the statement of Theorem \ref{th-m(x)} is
analytic for every $x \neq 0$ for which $f(x) > 0$. 
\end{theorem}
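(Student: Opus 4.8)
The plan is to show that at any $x_0 \in \RR_*$ with $f(x_0) > 0$ the three functions $\delta$, $\tdl$ and $m$ extend holomorphically across the real axis in a neighbourhood of $x_0$, so that $f = \pi^{-1}\Im m$ is there a convergent real power series in $x-x_0$. In the degenerate situations where $\support(\nu)\cap\RR_*$ or $\support(\tilde\nu)\cap\RR_*$ is a single point, $\mu$ coincides --- up to a scaling and an atom at $0$ --- with a limiting spectral measure of the sample covariance type, and the statement follows from \cite{sil-choi95}; I therefore assume from now on that each of $\support(\nu)\cap\RR_*$ and $\support(\tilde\nu)\cap\RR_*$ carries at least two points.

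First I would use Lemma \ref{lm-equiv-cvg}: since $f(x_0) = \pi^{-1}\Im m(x_0) > 0$, one has $\Im\delta(x_0) > 0$ and $\Im\tdl(x_0) > 0$, i.e. $\delta(x_0),\tdl(x_0) \in \CC_+$. Because $\Im\tdl(x_0) > 0$, for every $t \in \support(\nu)\subset\RR_+$ the quantities $1+\tdl t$ and $-z + ct\int u(1+u\tdl)^{-1}\nu(du)$ remain away from $0$ and the associated integrand remains bounded, uniformly for $(\tdl,z)$ in a small polydisc $W$ around $(\tdl(x_0),x_0)$ in $\CC^2$; hence the function $F(\tdl,z)$ of \eqref{tdelta-seul} extends holomorphically to $W$, with $F(\tdl(x_0),x_0)=0$ obtained by letting $z\in\CC_+\to x_0$ in $F(\tdl(z),z)=0$.

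The heart of the matter is to prove $\partial F/\partial\tdl\,(\tdl(x_0),x_0)\neq 0$. Running the computation from the proof of Proposition \ref{class-S} --- now legitimate at $(\tdl(x_0),x_0)$ since $\delta(x_0),\tdl(x_0)\in\CC_+$ --- and using \eqref{delta} to recognise $-x_0 + ct\int u(1+u\tdl(x_0))^{-1}\nu(du) = -x_0(1+\delta(x_0)t)$, one obtains
\[
\frac{\partial F}{\partial\tdl}(\tdl(x_0),x_0) \;=\; -1 + x_0^2\,\gamma(x_0,x_0)\,\tilde\gamma(x_0,x_0),
\]
where $\gamma(x_0,x_0) = c\int t^2\bigl(x_0^2(1+\tdl(x_0)t)^2\bigr)^{-1}\nu(dt)$ and $\tilde\gamma(x_0,x_0) = \int t^2\bigl(x_0^2(1+\delta(x_0)t)^2\bigr)^{-1}\tilde\nu(dt)$, as in \eqref{gamma}. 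I claim $|x_0^2\gamma(x_0,x_0)\tilde\gamma(x_0,x_0)| < 1$. Since $|(1+\tdl(x_0)t)^2| = |1+\tdl(x_0)t|^2$, the triangle inequality for integrals gives $|\gamma(x_0,x_0)| \le \overline\gamma := c\int t^2\bigl(x_0^2|1+\tdl(x_0)t|^2\bigr)^{-1}\nu(dt)$ and likewise $|\tilde\gamma(x_0,x_0)| \le \overline{\tilde\gamma}$; by dominated convergence (the imaginary parts of $\tdl(z)$ and $\delta(z)$ stay bounded away from $0$ near $x_0$) one has $\overline\gamma = \lim_{z\in\CC_+\to x_0}\gamma(z,z^*)$ and $\overline{\tilde\gamma} = \lim_{z\in\CC_+\to x_0}\tilde\gamma(z,z^*)$, so passing the inequality $|z|^2\gamma(z,z^*)\tilde\gamma(z,z^*)<1$ (established just after \eqref{gamma(z,z*)}) to the limit yields $x_0^2\overline\gamma\,\overline{\tilde\gamma}\le 1$. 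Finally, equality $|\gamma(x_0,x_0)| = \overline\gamma$ would force $t\mapsto\arg(1+\tdl(x_0)t)$ to be constant on $\support(\nu)\cap\RR_*$, which, having at least two points, is incompatible with $\Im\tdl(x_0)>0$ (compare real and imaginary parts of $1+\tdl(x_0)t$ at two distinct $t$). Hence $|\gamma(x_0,x_0)| < \overline\gamma$, whence $|x_0^2\gamma(x_0,x_0)\tilde\gamma(x_0,x_0)| < x_0^2\overline\gamma\,\overline{\tilde\gamma}\le 1$.

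To conclude, the holomorphic implicit function theorem \cite[Ch.~1, Th.~7.6]{frit-grau-(livre)02} produces a function $h$ holomorphic on a complex neighbourhood $U$ of $x_0$ with $h(x_0)=\tdl(x_0)$ and $F(h(z),z)=0$ on $U$. Shrinking $U$ so that $h(U\cap\CC_+)\subset\CC_+$, uniqueness of the $\CC_+$-valued solution of $F(\cdot,z)=0$ forces $h=\tdl$ on $U\cap\CC_+$, so $\tdl$ extends holomorphically to $U$. Substituting into \eqref{delta} and \eqref{ts} and using $1+\tdl(z)t\neq 0$ on $U$ (since $\Im\tdl$ is nonzero nearby), $\delta$ and $m$ extend holomorphically to $U$ as well, and writing $m(x)=\sum_k a_k(x-x_0)^k$ for real $x\in U$ shows $f(x)=\pi^{-1}\sum_k(\Im a_k)(x-x_0)^k$ is real-analytic at $x_0$. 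The main obstacle is the nonvanishing of $\partial F/\partial\tdl$: the strict inequality available on $\CC_+$ only survives as ``$\le 1$'' on the real axis, and strictness must be recovered from the equality case of the triangle inequality --- which is precisely where the non-degeneracy of $\nu$ and $\tilde\nu$ enters.
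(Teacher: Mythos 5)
Your proposal is correct and follows essentially the same route as the paper: extend $F$ holomorphically near $(\tdl(x_0),x_0)$, show $\partial F/\partial\tdl\neq 0$ there via the strict triangle inequality (using non-degeneracy of $\nu$), and apply the holomorphic implicit function theorem before identifying the extension with $\tdl$ on $\CC_+$. The only (immaterial) variations are that the paper derives the exact identity $x_0^2\Gamma(x_0,x_0)\widetilde\Gamma(x_0,x_0)=1$ from \eqref{gamma(z,z*)} where you settle for ``$\le 1$'' by passing the open inequality to the limit, and that your non-degeneracy hypothesis ($\support(\nu)\cap\RR_*$ not a singleton) is stated a bit more precisely than the paper's ``$\nu$ not a Dirac measure.''
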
 
\begin{proof}
We can assume that $\nu$ is not a Dirac measure, otherwise 
see~\cite{sil-choi95}. Let $x_0 \neq 0$ be such that $f(x_0) > 0$. 
We start by showing that $\tdl(z)$ can be analytically extended from 
$\CC_+$ to a neighborhood of $x_0$ in $\CC$. 
Write 
\begin{gather*} 
\gamma(x_0, x_0) = \lim_{z\in\CC_+\to x_0} \gamma(z,z), \quad 
\tilde\gamma(x_0, x_0) = \lim_{z\in\CC_+\to x_0} \tilde\gamma(z,z), \\ 
\Gamma(x_0, x_0) = \lim_{z\in\CC_+\to x_0} \gamma(z,z^*), \quad 
\widetilde\Gamma(x_0, x_0) = \lim_{z\in\CC_+\to x_0} \tilde\gamma(z,z^*). 
\end{gather*} 
Making $z \in \CC_+$ converge to $x_0$ in Equation \eqref{gamma(z,z*)} and 
recalling that the integral at the right hand side of this equation remains 
bounded and that $\Im\tdl(x_0) > 0$, we get that 
$x_0^2 \Gamma(x_0, x_0) \widetilde\Gamma(x_0, x_0) = 1$. 
Any integrable random variable $X$ satisfies $| \EE X | \leq \EE|X|$, the 
equality being achieved if and only if $X = \theta |X|$ almost everywhere, 
where $\theta$ is a modulus one constant. Consequently, 
$|\gamma(x_0, x_0)| < \Gamma(x_0, x_0)$ since $\nu$ is not a Dirac measure, 
and $|\tilde\gamma(x_0, x_0)| \leq \widetilde\Gamma(x_0, x_0)$. Therefore, 
$|x_0^2 \gamma(x_0, x_0) \tilde\gamma(x_0, x_0)| < 1$. 
Now, since $\Im\tdl(x_0) > 0$, it is easy to see by inspecting Equation 
\eqref{tdelta-seul} that the function $F(\tdl,z)$ 
which is holomorphic on $\CC_+^2$ can be analytically extended to a 
neighborhood of $(\tdl(x_0), x_0)$ in $\CC_+ \times \CC_*$ where 
$\CC_* =\CC - \{ 0 \}$. Observing that 
\[
\frac{\partial F}{\partial\tdl} (\tdl(x_0), x_0) = 
-1+ x_0^2 \gamma(x_0,x_0) \tilde\gamma(x_0, x_0) \neq 0 
\]
and invoking the holomorphic implicit function theorem,  
we get that there exists a  
neighborhood $V \subset \CC_*$ of $x_0$, a neighborhood $V' \subset \CC_+$
of $\tdl(x_0)$ and a holomorphic function $\underline\tdl : V \to V'$ such that
\[
\{ (z,\tdl) \in V \times V' \, : \, F(\tdl, z) = 0 \} 
\ = \ 
\{ (z, \underline\tdl(z)) \, : \, z \in V \} .
\] 
Since $\tdl(z)$ and $\underline\tdl(z)$ coincide on $V \cap \CC_+$, the 
function $\underline\tdl(z)$ is an analytic extension of $\tdl(z)$ on $V$. \\
This result shows in conjunction with Equation \eqref{ts} that $m(z)$ can be 
extended analytically to $V$. Therefore, writing 
$m(z) = \sum_{\ell \geq 0} a_\ell (z - x_0)^\ell$ we get that 
$f(x) = \pi^{-1} \sum_{\ell\geq 0} \Im a_\ell \, (x - x_0)^\ell$ near
$x_0$. 
\end{proof}

We now study the behavior of the density $f(x)$ near a boundary point $a > 0$ 
of $\support(\mu)$. The observations made above show that when $a$ is a left
end point (resp.~a right end point) of $\support(\mu)$, it is a local supremum 
(resp.~a local infimum) of one of the functions $\xII$. 
Parallelling the assumptions made in \cite{MarPas67}, \cite{sil-choi95} and 
\cite{DozSil07b}, we restrict ourselves to the case where $a= \xII(\bs\tdl_a)$ 
for some $\bs\tdl_a \in \dom(\xII)$. In that case, $\xII$ is of course 
analytical around $\bs\tdl_a$ and $\xII'(\bs\tdl_a) = 0$. \\
Note that this assumption might not be satisfied for some choices of the 
measures $\nu$ and $\tilde\nu$. Assuming $a > 0$ is a left end point of 
$\support(\mu)$, it is for instance possible that the function $\xII(\bs\tdl)$ 
increases to $a$ as $\bs\tdl \uparrow \bs\tdl_a$ with 
$-\bs\tdl_a^{-1} \in \partial\nu$. We however note that our assumption is 
valid when the measures $\nu$ and $\tilde\nu$ are both discrete. 

\begin{theorem}
\label{sq-root}
Let $\cal I$ and $\widetilde{\cal I}$ be two connected components of 
$\cal D$ and $\widetilde{\cal D}$ respectively, and assume that 
$\xII$ reaches a maximum at a point $\bs\tdl_a \in \dom(\xII)$. Then 
$\xII''(\bs\tdl_a) < 0$. Furthermore, for $\varepsilon > 0$ small enough, 
$f(x) = H(\sqrt{x-a})$ on $(a, a+\varepsilon)$ where $H(x)$ is a real 
analytical function near zero, $H(0) = 0$, and 
\[
H'(0) = \frac{1}{\pi a} \sqrt{\frac{-2}
{\xII''(\bs\tdl_a)}} \int \frac{t}{(1+\bs\tdl_a t)^2} \nu(dt) .
\]
Assume now that $\xII$ reaches a minimum at a point $\bs\tdl_a \in \dom(\xII)$.
Then $\xII''(\bs\tdl_a) > 0$. Furthermore, for $\varepsilon > 0$ small enough, 
$f(x) = H(\sqrt{a-x})$ on $(a-\varepsilon,a)$ where $H(x)$ is a real 
analytical function near zero, $H(0) = 0$, and 
\[
H'(0) = \frac{1}{\pi a} \sqrt{\frac{2}
{\xII''(\bs\tdl_a)}} \int \frac{t}{(1+\bs\tdl_a t)^2} \nu(dt) .
\]
\end{theorem}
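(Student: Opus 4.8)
The statement has two essentially independent halves: the strict sign of $\xII''(\bs\tdl_a)$, and the square-root expansion of $f$ near $a$ together with the value of $H'(0)$. The second half is the mechanical one, so I sketch it in detail; the first is where the real work lies. Throughout, let $\bs F(\tdl,x)$ be the function of \eqref{F-real}, which is holomorphic near $(\bs\tdl_a,a)$ since $-\bs\tdl_a^{-1}\notin\support(\nu)$ (as $\bs\tdl_a\in\widetilde{\cal D}$) and $-\bs\delta_a^{-1}\notin\support(\tilde\nu)$ (as $\bs\delta_a:=\bs\delta(\bs\tdl_a)\in{\cal D}$), and put $\Phi(\bs\tdl)=\int(1+\bs\tdl t)^{-1}\nu(dt)$, also holomorphic near $\bs\tdl_a$. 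As in the proof of Theorem~\ref{holom}, $\xII$ extends holomorphically to a complex neighbourhood of $\bs\tdl_a$, the solution set of $\bs F=0$ there is exactly $\{x=\xII(\tdl)\}$, $\xII(\tdl(z))=z$ for $z\in\CC_+$ near $a$, and, passing to boundary values, $\xII(\tdl(x))=x$ for $x$ in the support near $a$.

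\textbf{Sign of $\xII''(\bs\tdl_a)$.} Differentiating $\bs F(\tdl,\xII(\tdl))\equiv0$ once gives $\xII'(\tdl)=-(\partial\bs F/\partial\tdl)/(\partial\bs F/\partial x)$; since $\xII'(\bs\tdl_a)=0$ and $\partial\bs F/\partial x(\bs\tdl_a,a)=\int t\,[a^2(1+\bs\delta_a t)^2]^{-1}\tilde\nu(dt)>0$, one gets $\partial\bs F/\partial\tdl(\bs\tdl_a,a)=-1+a^2\bs\gamma(a,\bs\tdl_a)\bs{\tilde\gamma}(a,\bs\delta_a)=0$, and differentiating once more and evaluating at $\bs\tdl_a$ yields $\xII''(\bs\tdl_a)=-[\partial^2\bs F/\partial\tdl^2(\bs\tdl_a,a)]/[\partial\bs F/\partial x(\bs\tdl_a,a)]$. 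Since $\xII$ has a local maximum at $\bs\tdl_a$ we already know $\xII''(\bs\tdl_a)\le0$, so the issue is to exclude $\partial^2\bs F/\partial\tdl^2(\bs\tdl_a,a)=0$. The plan is to compute $\partial^2\bs F/\partial\tdl^2(\bs\tdl_a,a)$ explicitly as a fixed combination of the moments $\int t^k(1+\bs\tdl_a t)^{-k}\nu(dt)$ and $\int t^k(1+\bs\delta_a t)^{-k}\tilde\nu(dt)$, $k=2,3$, substitute the edge identity $a^2\bs\gamma(a,\bs\tdl_a)\bs{\tilde\gamma}(a,\bs\delta_a)=1$, and then exploit that $1+\bs\tdl_a t$ is of constant sign on $\support(\nu)$ and $1+\bs\delta_a t$ of constant sign on $\support(\tilde\nu)$, together with $a>0$ and the fact that the relevant branch $\xII$ is increasing just to the left of $\bs\tdl_a$, to pin these signs and conclude $\partial^2\bs F/\partial\tdl^2(\bs\tdl_a,a)>0$ strictly. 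I expect this sign bookkeeping to be the main obstacle. A fallback, if the direct chase is too delicate, is a contradiction argument: if $\xII''(\bs\tdl_a)=0$, then $\bs\tdl_a$ being a local maximum forces $\xII(\tdl)-a$ to have a zero of even order $2p\ge4$ at $\bs\tdl_a$; the expansion below then produces $\tdl(x)=\bs\tdl_a+c_0(x-a)^{1/2p}(1+o(1))$ with $c_0\ne0$ and $f$ vanishing like $(x-a)^{1/2p}$ on $(a,a+\varepsilon)$, and feeding this (and the ensuing fractional expansion of $\delta(x)$) back into the master equations \eqref{reciproque}, subject to $\Im\tdl(x),\Im\delta(x)>0$, leaves no consistent balance of the $(x-a)^{1/2p}$ coefficients unless $p=1$.

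\textbf{Square-root expansion.} Assume $\xII''(\bs\tdl_a)<0$ (maximum case). Since $\xII$ is holomorphic near $\bs\tdl_a$ with $\xII'(\bs\tdl_a)=0\ne\xII''(\bs\tdl_a)$, write $\xII(\bs\tdl)-a=(\bs\tdl-\bs\tdl_a)^2\eta(\bs\tdl-\bs\tdl_a)$ with $\eta$ holomorphic near $0$ and $\eta(0)=\tfrac12\xII''(\bs\tdl_a)<0$. Choosing a holomorphic branch of $\sqrt\eta$ near $0$, set $s(w)=w\sqrt{\eta(w)}$; as $s'(0)=\sqrt{\eta(0)}\ne0$, $s$ is a biholomorphism near $0$ with holomorphic inverse $W$, $W(0)=0$, $W'(0)=\eta(0)^{-1/2}$ purely imaginary. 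On $\bs F=0$, $x-a=s(\bs\tdl-\bs\tdl_a)^2$, so $\bs\tdl-\bs\tdl_a=W(\pm\sqrt{x-a})$; imposing $\Im\tdl(x)>0$ on $(a,a+\varepsilon)$ (valid since $a$ is a left end point, so $f>0$ just to the right) fixes the branch and the sign of $\sqrt\eta$, giving, with $r=\sqrt{x-a}$,
\[
\tdl(x)=\bs\tdl_a+W(r),\qquad W'(0)=i\sqrt{\frac{-2}{\xII''(\bs\tdl_a)}}.
\]
By \eqref{ts}, $m(x)=-x^{-1}\Phi(\tdl(x))=-(a+r^2)^{-1}\Phi(\bs\tdl_a+W(r))$ is holomorphic in $r$ near $0$; write $m=\sum_{\ell\ge0}c_\ell r^\ell$. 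Then $c_0=-a^{-1}\Phi(\bs\tdl_a)\in\RR$, so $f(x)=\pi^{-1}\Im m(x)=\pi^{-1}\sum_{\ell\ge1}(\Im c_\ell)r^\ell=:H(\sqrt{x-a})$ with $H$ real-analytic near $0$ and $H(0)=0$. Finally $H'(0)=\pi^{-1}\Im c_1=-(\pi a)^{-1}\Phi'(\bs\tdl_a)\,\Im W'(0)$, and since $\Phi'(\bs\tdl_a)=-\int t(1+\bs\tdl_a t)^{-2}\nu(dt)$ and $\Im W'(0)=\sqrt{-2/\xII''(\bs\tdl_a)}$, this equals the asserted value. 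The minimum case is identical with $\sqrt{x-a}$ replaced by $\sqrt{a-x}$: now $\eta(0)=\tfrac12\xII''(\bs\tdl_a)>0$, $W'(0)$ is real, the branch is selected by $\Im\tdl(x)>0$ on $(a-\varepsilon,a)$, and one obtains $\sqrt{2/\xII''(\bs\tdl_a)}$ under the root. The degenerate cases where $\nu$ or $\tilde\nu$ is a Dirac measure reduce to \cite{sil-choi95}.
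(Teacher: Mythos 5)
The second half of your argument (the local factorization $\zII(\tdl)-a=\varphi(\tdl)^2$, the branch choice forced by $\Im\tdl>0$, and the expansion of $m$ in powers of $\sqrt{x-a}$) is correct and is essentially the paper's own derivation. The gap is in the first half: you have not actually proved $\xII''(\bs\tdl_a)\neq 0$, which is the real content of the theorem and the reason the paper needs Lemma \ref{3rd-deriv} and its appendix. Your primary route proposes to show $\partial^2\bs F/\partial\tdl^2(\bs\tdl_a,a)>0$ \emph{strictly} by "sign bookkeeping" on the moments $\int t^3(1+\bs\tdl_a t)^{-3}\nu(dt)$ and $\int t^3(1+\bs\delta_a t)^{-3}\tilde\nu(dt)$. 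This is stronger than what is true: the paper does not claim that $\partial^2\bs F/\partial\tdl^2$ is nonzero at every point where $\bs F=0$ and $\partial\bs F/\partial\tdl=0$ (it may vanish at an inflection of $\xII$), and the third-moment integrals have no definite sign since $1+\bs\tdl_a t$ and $1+\bs\delta_a t$ are merely of constant sign, not necessarily positive, on the relevant supports. What the paper proves instead is conditional: \emph{if} $\partial^2\bs F/\partial\tdl^2=0$ there, \emph{then} $\partial^3\bs F/\partial\tdl^3\neq 0$ — via two applications of Cauchy--Schwarz to the third and fourth moments, using that $\nu$ and $\tilde\nu$ are not both Dirac — and combines this with the fact that the first nonvanishing derivative at a local extremum must be of even order. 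Your sign chase, as described, cannot reach this conclusion.

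Your fallback is also not a proof. Assuming the zero of $\xII(\tdl)-a$ has even order $2p\ge 4$ and "feeding the fractional expansion of $\tdl(x)$ back into the master equations" yields nothing: the branch $\xII$ is \emph{defined} by $\bs F(\tdl,\xII(\tdl))\equiv 0$, so the master equations are satisfied identically along it and every coefficient balance holds automatically for any $p$. The obstruction to $p\ge 2$ is not a failure of consistency of the expansion but the quantitative moment inequality of Lemma \ref{3rd-deriv}, which shows the order of the zero would have to be exactly $3$ (odd) — incompatible with an extremum. To complete your proof you need to supply that lemma, or an equivalent argument ruling out $\partial^2\bs F/\partial\tdl^2=\partial^3\bs F/\partial\tdl^3=0$ simultaneously.
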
 
To prove the theorem, we start with the following lemma which is proven in 
the appendix:
\begin{lemma} 
\label{3rd-deriv} 
Assume that either $\nu$ or $\tilde{\nu}$ is not a Dirac measure.
Let $({\bs\tdl}_a, a)$ with ${a}\neq 0$ satisfy 
\[
\bs F({\bs\tdl}_a,{a})=0,\quad \frac{\partial \bs F}{\partial {\bs\tdl}}
({\bs\tdl}_a,{a}) = 0
\]
where the function $\bs F(\bs\tdl, \bs x)$ is defined by Equation 
\eqref{F-real}. Then 
\[
\frac{\partial^2 \bs F}{\partial {\bs\tdl}^2} ({\bs\tdl}_a,{a}) = 0 
 ~\Rightarrow~ 
\frac{\partial^3 \bs F}{\partial {\bs\tdl}^3} ({\bs\tdl}_a,{a}) \neq 0. 
\]
\end{lemma}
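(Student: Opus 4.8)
Here is the plan I would follow.

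The plan is to exploit the factored form $\bs F(\bs\tdl,x)=G(\psi(\bs\tdl),x)-\bs\tdl$ of the function in \eqref{F-real}, where $\psi(\bs\tdl)=c\int u(1+u\bs\tdl)^{-1}\nu(du)$ and $G(s,x)=\int t(ts-x)^{-1}\tilde\nu(dt)$, and to express the first three $\bs\tdl$-derivatives of $\bs F$ at $(\bs\tdl_a,a)$ through the two moment families
\[
\sigma_k=\int\Big(\frac{u}{1+u\bs\tdl_a}\Big)^k\nu(du),\qquad
\tau_k=\int\Big(\frac{t}{t\psi(\bs\tdl_a)-a}\Big)^k\tilde\nu(dt),\qquad k\le 4 .
\]
These are finite (this is implicit in the statement, which refers to $\partial_{\bs\tdl}^3\bs F(\bs\tdl_a,a)$; concretely $-1/\bs\tdl_a\notin\support\nu$ and $a/\psi(\bs\tdl_a)\notin\support\tilde\nu$), and $\sigma_2>0$ since $\nu\ne\bs d_0$. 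Using $\psi^{(j)}(\bs\tdl_a)=(-1)^j j!\,c\,\sigma_{j+1}$ and $\partial_s^jG(\psi(\bs\tdl_a),a)=(-1)^j j!\,\tau_{j+1}$ for $j=1,2,3$, the chain rule yields at $(\bs\tdl_a,a)$
\[
\partial_{\bs\tdl}\bs F=c\sigma_2\tau_2-1,\qquad
\partial_{\bs\tdl}^2\bs F=2c\big(c\sigma_2^2\tau_3-\sigma_3\tau_2\big),\qquad
\partial_{\bs\tdl}^3\bs F=6c^3\sigma_2^3\tau_4-12c^2\sigma_2\sigma_3\tau_3+6c\sigma_4\tau_2 .
\]
The hypotheses $\bs F(\bs\tdl_a,a)=0$ and $\partial_{\bs\tdl}\bs F(\bs\tdl_a,a)=0$ give $c\sigma_2\tau_2=1$, and the extra hypothesis $\partial_{\bs\tdl}^2\bs F(\bs\tdl_a,a)=0$ gives $\sigma_3=c^2\sigma_2^3\tau_3$.

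Substituting these two identities into the third derivative collapses it to
\[
\partial_{\bs\tdl}^3\bs F(\bs\tdl_a,a)=\frac{6}{\sigma_2^2}\,\Phi,\qquad
\Phi:=c^3\sigma_2^5\tau_4+\sigma_2\sigma_4-2\sigma_3^2
=\big(c^3\sigma_2^5\tau_4-\sigma_3^2\big)+\big(\sigma_2\sigma_4-\sigma_3^2\big).
\]
Each bracket is nonnegative: $\sigma_3^2\le\sigma_2\sigma_4$ is Cauchy--Schwarz for $\nu$ applied to $u\mapsto u/(1+u\bs\tdl_a)$ and its square, while $\sigma_3^2=c^4\sigma_2^6\tau_3^2\le c^4\sigma_2^6\tau_2\tau_4=c^3\sigma_2^5\tau_4$ uses $\sigma_3=c^2\sigma_2^3\tau_3$, Cauchy--Schwarz for $\tilde\nu$ applied to $t\mapsto t/(t\psi(\bs\tdl_a)-a)$ and its square, and $\tau_2=1/(c\sigma_2)$. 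Hence $\Phi\ge 0$, and $\Phi>0$ (so $\partial_{\bs\tdl}^3\bs F(\bs\tdl_a,a)\ne 0$, indeed $>0$) unless \emph{both} Cauchy--Schwarz inequalities are equalities.

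The last step is to rule that simultaneous equality case out. Since $u\mapsto u/(1+u\bs\tdl_a)$ is injective and vanishes only at $u=0$, equality in the first Cauchy--Schwarz forces $\support\nu\subseteq\{0,u_0\}$ for some $u_0\ne 0$, i.e.\ $\nu=(1-p)\bs d_0+p\bs d_{u_0}$ with $p\in(0,1]$; similarly $\tilde\nu=(1-q)\bs d_0+q\bs d_{t_0}$ with $q\in(0,1]$ and $t_0\ne 0$. For such measures $\sigma_k=p v_0^k$ and $\tau_k=q w_0^k$ ($k\ge 1$), where $v_0:=u_0/(1+u_0\bs\tdl_a)\ne 0$, $w_0:=t_0/(t_0\psi(\bs\tdl_a)-a)\ne 0$, and $\psi(\bs\tdl_a)=cp v_0$; the three vanishing conditions then read $\bs\tdl_a=q w_0$, $cpq\,v_0^2w_0^2=1$ and $cp\,v_0w_0=1$. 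The last two give $q v_0 w_0=1$, hence $q=cp$ and $v_0 w_0=1/q$; inserting $\bs\tdl_a=q w_0$ and $v_0 w_0=1/q$ into $v_0(1+u_0\bs\tdl_a)=u_0$ gives $v_0=0$, contradicting $u_0\ne 0$. So the equality case is impossible, $\Phi>0$, and the lemma follows.

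The chain-rule computations and the substitution leading to $\Phi$ are routine; the genuine obstacle is the last paragraph — ruling out $\Phi=0$ — which needs the observation that equality in both Cauchy--Schwarz inequalities reduces $\nu$ and $\tilde\nu$ to ``one atom plus possibly an atom at zero'' measures, together with the short elimination showing that no such pair can satisfy all three vanishing conditions at once. (One sees along the way that the hypothesis that $\nu$ or $\tilde\nu$ is not a Dirac measure plays no role in this argument: the same elimination with $p=q=1$ also excludes a pair of single Diracs.)
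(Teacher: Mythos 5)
Your proof is correct, and its computational core coincides with the paper's: your $\sigma_k$ and $\tau_k$ are, up to the normalizations $\bs\gamma_a=c\sigma_2/a^2$ and $\bs{\tilde\gamma}_a=\tau_2$, exactly the integrals the paper manipulates; the identities $c\sigma_2\tau_2=1$ and $\sigma_3=c^2\sigma_2^3\tau_3$ are the paper's first- and second-derivative conditions; and the two Cauchy--Schwarz inequalities you invoke are the same two the paper uses. Where you genuinely add something is in the equality case. The paper asserts that equality in $(\int h^3\,d\pi)^2\le\int h^2\,d\pi\cdot\int h^4\,d\pi$ ``holds only if $\pi$ is a Dirac measure''; in fact equality only forces $h\in\{0,\lambda\}$ $\pi$-a.e., so $\pi$ may carry an additional atom at $0$ (where $h(u)=u/(1+u\bs\tdl_a)$ vanishes), and the not-both-Dirac hypothesis does not by itself exclude, say, $\nu=\tfrac12\bs d_0+\tfrac12\bs d_{u_0}$ --- a case the paper explicitly allows elsewhere, since it studies $\nu(\{0\})>0$. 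Your explicit elimination --- reducing the double-equality case to $\nu=(1-p)\bs d_0+p\bs d_{u_0}$ and $\tilde\nu=(1-q)\bs d_0+q\bs d_{t_0}$, deducing $qv_0w_0=1$ from the three vanishing conditions, hence $v_0\bs\tdl_a=1$, which contradicts $v_0(1+u_0\bs\tdl_a)=u_0$ with $u_0\neq0$ --- closes this gap; I checked it and it is correct. As a by-product you obtain the slightly stronger conclusion $\partial^3_{\bs\tdl}\bs F(\bs\tdl_a,a)>0$ rather than merely $\neq0$, and you do so without using the hypothesis that one of $\nu,\tilde\nu$ is not a Dirac.
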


\begin{proof}[Proof of Theorem \ref{sq-root}] 
We follow the argument of \cite{MarPas67}. 
We first assume that $\xII$ reaches a maximum at 
$\bs\tdl_a \in \widetilde{\cal I}$ and prove that $\xII''(\bs\tdl_a) < 0$. 
Observe that $\xII(\bs\tdl)$ satisfies $\bs F(\bs\tdl, \xII(\bs\tdl)) = 0$, 
and that $\partial\bs F/\partial\bs x = 
\int t (\bs x(1 + \bs\delta t))^{-2} \tilde\nu(dt) > 0$. 
By the chain rule for differentiation, 
\begin{align*} 
0 &= \frac{\partial\bs F}{\partial\bs\tdl} + 
\frac{\partial\bs F}{\partial\bs x} \xII'(\bs\tdl), \\
0 &=  
\frac{\partial^2\bs F}{\partial\bs\tdl^2} + 
\left( \frac{\partial^2\bs F}{\partial\bs x^2} + 
2 \frac{\partial^2\bs F}{\partial\bs\tdl\partial\bs x} \right) \xII'(\bs\tdl) 
+ \frac{\partial\bs F}{\partial\bs x} \xII''(\bs\tdl) . 
\end{align*} 
If we assume that $\xII''(\bs\tdl_a) = 0$, then 
$(\partial^2\bs F/\partial\bs\tdl^2)(\bs\tdl_a, a) = 0$ and it is furthermore 
easy to check that 
\[
\bs x^{(3)}(\bs\tdl_a) = - 
\frac{\partial^3\bs F/\partial\bs\tdl^3}
{\partial\bs F/\partial\bs x} (\bs\tdl_a, a) . 
\]
By Lemma \ref{3rd-deriv}, $\bs x^{(3)}(\bs\tdl_a) \neq 0$, but this contradicts
the fact that the first non zero derivative of a function at a local extremum
is of even order. Hence $\xII''(\bs\tdl_a) < 0$. \\
Equation \eqref{eq-x} shows that $\xII$ can be analytically extended to 
a function $\zII$ in a neighborhood of $\bs\tdl_a$ in the complex plane. Since 
$\xII'(\bs\tdl_a) = 0$ and $\xII''(\bs\tdl_a) < 0$, we can write 
$\zII(\tdl) - a = \varphi(\tdl)^2$ in this neighborhood where $\varphi$ is 
an analytical function satisfying $\varphi(\bs\tdl_a) = 0$ and 
$(\varphi'(\bs\tdl_a))^2 = \xII''(\bs\tdl_a) / 2$. We choose $\varphi$ such
that $\varphi'(\bs\tdl_a) = - \imath (-\xII''(\bs\tdl_a) / 2)^{1/2}$. If we 
choose $x > a$ such that $x-a$ is small enough, then 
$\zII(\tdl(x))  - a = \varphi(\tdl(x))^2$, and moreover $\zII(\tdl(x)) = x$.  
Considering the local inverse $\Phi$ of $\varphi$ in a neighborhood of 
$\bs\tdl_a$, we get that $\tdl(x) = \Phi(\sqrt{x-a})$ where the analytic 
function $\Phi$ satisfies 
$\Phi(0) = \bs\tdl_a$ and $\Phi'(0) = 1/ \varphi'(\bs\tdl_a) = 
\imath (-2 / \xII''(\bs\tdl_a))^{1/2}$ (thus the choice of 
$\varphi'(\bs\tdl_a)$ ensures that $\Im \tdl(x) > 0$). Using the equation
$\Im m(x) = - x^{-1} \int \Im ((1 + \tdl(x) t)^{-1}) \nu(dt)$, we get the
result. The case where $\xII$ reaches a minimum at $\bs\tdl_a$ is treated
similarly. 
\end{proof} 

\appendix 
\section{Proof of Lemma \ref{3rd-deriv}} 
\label{prf-der3} 
First recall that
	\begin{align}
		\label{eq:first_der}
		\frac{\partial \bs F}{\partial {\bs\tdl}} ({\bs\tdl},{\bs x}) = {\bs x}^2 {\bs \gamma}({\bs x},{\bs\tdl})\bs{\tilde\gamma}({\bs x},{\bs\delta}) - 1
	\end{align}
	so that ${a}^2 {\bs \gamma}_a\bs{\tilde\gamma}_a=1$, with 
${\bs \gamma}_a={\bs \gamma}({a},{\bs\tdl}_a)$, 
$\bs{\tilde\gamma}_a=\bs{\tilde\gamma}({a},{\bs\delta}_a)$, and 
\[
{\bs\delta}_a=c\int \frac{t}{-{a}(1+{\bs\tdl}_at)}\nu(dt) . 
\] 
Differentiating \eqref{eq:first_der}, the equation 
$(\partial^2 \bs F/\partial {\bs\tdl}^2) ({\bs\tdl}_a,{a}) = 0$ reads
\begin{align}
	\label{eq:sec_der}
	\bs{\tilde\gamma}_a c \int \frac{t^3}{(1+{\bs\tdl}_at)^3}\nu(dt) + {a} {\bs \gamma}_a^2 \int \frac{t^3}{(1+{\bs\delta}_at)^3}\tilde{\nu}(dt) = 0
\end{align}
where we used 
	\begin{align*}
		\frac{\partial}{\partial {\bs\tdl}} \left( c\int \frac{t}{-{\bs x}(1+{\bs\tdl}t)}\nu(dt) \right)({\bs\tdl}_a,{a})={a}{\bs \gamma}_a.
	\end{align*}
	Assume now that $(\partial^3 \bs F/\partial {\bs\tdl}^3) ({\bs\tdl}_a,{a}) = 0$. A second differentiation of \eqref{eq:first_der} leads then to
	\begin{align*}
		0&=2\frac{{\bs \gamma}_a}{a} c \int \frac{t^3}{(1+{\bs\delta}_at)^3}\tilde{\nu}(dt) \int \frac{t^3}{(1+{\bs\tdl}_at)^3}\nu(dt) \nonumber \\
		&+ \bs{\tilde\gamma}_a c \int \frac{t^4}{(1+{\bs\tdl}_at)^4}\nu(dt) + {a}^2{\bs \gamma}_a^3 \int \frac{t^4}{(1+{\bs\delta}_at)^4}\tilde{\nu}(dt).
	\end{align*}
	Using ${a}^2 {\bs \gamma}_a\bs{\tilde\gamma}_a=1$, replace now ${\bs \gamma}_a/{a}$ by $1/({a}^3\bs{\tilde\gamma}_a)$ in the leftmost term and ${a}^2{\bs \gamma}_a^3$ by ${\bs \gamma}_a^2/\bs{\tilde\gamma}_a$ in the rightmost term. Multiplying the result by $\bs{\tilde\gamma}_a$ leads to
	\begin{align}
		\label{eq:3rd_der}
		0&= 2 \frac{c}{ {a}^3 } \int \frac{t^3}{(1+{\bs\delta}_at)^3}\tilde{\nu}(dt) \int \frac{t^3}{(1+{\bs\tdl}_at)^3}\nu(dt) \nonumber \\
		&+ \bs{\tilde\gamma}_a^2 c \int \frac{t^4}{(1+{\bs \tdl}_at)^4}\nu(dt) +  {\bs \gamma}_a^2 \int \frac{t^4}{(1+{\bs \delta}_at)^4}\tilde{\nu}(dt).
	\end{align}
	We now use \eqref{eq:sec_der} and ${a}^2{\bs \gamma}_a\bs{\tilde\gamma}_a=1$ to write the two equations:
	\begin{align*}
		2\frac{c}{ {a}^3 } \int \frac{t^3}{(1+{\bs\tdl}_at)^3}\nu(dt) &= -\frac{2}{ {a}^2 } \frac{ \bs{\gamma}_a^2 }{ \bs{\tilde\gamma}_a } \int \frac{t^3}{(1+\bs{\delta}_at)^3}\tilde{\nu}(dt) \\
		2\frac{c}{ {a}^3 } \int \frac{t^3}{(1+{\bs\delta}_at)^3}\tilde{\nu}(dt) &= -\frac{2c^2}{ {a}^2 } \frac{ \bs{\tilde\gamma}_a^2 }{ {\bs \gamma}_a } \int \frac{t^3}{(1+{\bs\tdl}_at)^3}\nu(dt).
	\end{align*}
	Replacing the corresponding terms in the leftmost term of \eqref{eq:3rd_der} leads to the two equations
	\begin{align*}
		\frac{2}{ {a}^2 } \frac{ {\bs \gamma}_a^2 }{ \bs{\tilde\gamma}_a } \left( \int \frac{t^3}{(1+{\bs\delta}_at)^3}\tilde{\nu}(dt) \right)^2 - \bs{\tilde\gamma}_a^2 c\int \frac{t^4}{(1+{\bs \tdl}_at)^4}\nu(dt) - {\bs \gamma}_a^2 \int \frac{t^4}{(1+{\bs \delta}_at)^4}\tilde{\nu}(dt)&=0 \\
		\frac{2}{ {a}^2 } \frac{ \bs{\tilde\gamma}_a^2 }{ {\bs\gamma}_a } \left( c \int \frac{t^3}{(1+{\bs\tdl}_at)^3}\nu(dt) \right)^2 - \bs{\tilde\gamma}_a^2 c \int \frac{t^4}{(1+{\bs \tdl}_at)^4}\nu(dt) - {\bs \gamma}_a^2 \int \frac{t^4}{(1+{\bs \delta}_at)^4}\tilde{\nu}(dt)&=0.
	\end{align*}
	Multiplying each equation by ${\bs \gamma}_a\bs{\tilde\gamma}_a$ and averaging then gives:
\begin{align}
	\label{eq:3rd_der_2}
	0&=\frac1{ {a}^2 } {\bs \gamma}_a^3 \left( \int \frac{t^3}{(1+{\bs\delta}_at)^3}\tilde{\nu}(dt) \right)^2 + \frac1{ {a}^2 } \bs{\tilde\gamma}_a^3 \left( c \int \frac{t^3}{(1+{\bs\tdl}_at)^3}\nu(dt) \right)^2 \nonumber \\
	&-  \bs{\tilde\gamma}_a^3{\bs \gamma}_a c\int \frac{t^4}{(1+{\bs \tdl}_at)^4}\nu(dt) - {\bs \gamma}_a^3\bs{\tilde\gamma}_a \int \frac{t^4}{(1+{\bs \delta}_at)^4}\tilde{\nu}(dt).
\end{align}
Remark now, by expanding the definition of ${\tilde\gamma}_a$ that
\begin{align*}
	&\frac1{ {a}^2 } {\bs \gamma}_a^3 \left( \int \frac{t^3}{(1+{\bs\delta}_at)^3}\tilde{\nu}(dt) \right)^2 - {\bs \gamma}_a^3\bs{\tilde\gamma}_a \int \frac{t^4}{(1+{\bs \delta}_at)^4}\tilde{\nu}(dt) \nonumber \\
	&= \frac{{\bs \gamma}_a^3}{ {a}^2}\left[ \left( \int \frac{t^3}{(1+{\bs\delta}_at)^3}\tilde{\nu}(dt) \right)^2 -  \int \frac{t^2}{(1+{\bs\delta}_at)^2}\tilde{\nu}(dt) \int \frac{t^4}{(1+{\bs \delta}_at)^4}\tilde{\nu}(dt) \right] \\
	&\leq 0
\end{align*}
with the inequality arising from Cauchy--Schwarz. The case of equality holds only if $\tilde\nu$ is a Dirac measure. Similarly, 
\begin{align*}
	&\frac1{ {a}^2 } \bs{\tilde\gamma}_a^3 \left( c \int \frac{t^3}{(1+{\bs\tdl}_at)^3}\nu(dt) \right)^2 - \bs{\tilde\gamma}_a^3{\bs \gamma}_a c\int \frac{t^4}{(1+{\bs\tdl}_at)^4}\nu(dt) \nonumber \\
	&= \frac{\bs{\tilde\gamma}_a^3}{ {a}^2}\left[ \left( c\int \frac{t^3}{(1+{\bs\tdl}_at)^3}\nu(dt) \right)^2 -  \left(c\int \frac{t^2}{(1+{\bs\tdl}_at)^2}\nu(dt)\right) \left( c\int \frac{t^4}{(1+{\bs \tdl}_at)^4}\nu(dt)\right) \right]\\
	&\leq 0
\end{align*}
with equality only if $\nu$ is a Dirac measure. Therefore, to ensure \eqref{eq:3rd_der_2}, both $\nu$ and $\tilde{\nu}$ must be Dirac measures, which goes against the hypothesis.


\def\cprime{$'$} \def\cdprime{$''$} \def\cprime{$'$} \def\cprime{$'$}
  \def\cprime{$'$} \def\cprime{$'$}

\end{document}